\providecommand{\U}[1]{\protect\rule{.1in}{.1in}}
\DeclareMathAlphabet{\mathpzc}{OT1}{pzc}{m}{it}
\newtheorem{theorem}{\bf Theorem}[section]
\newtheorem{lemma}[theorem]{\bf Lemma}
\newtheorem{assumption}[theorem]{Assumption}
\newtheorem{proposition}[theorem]{\bf Proposition}
\theoremstyle{remark}
\newtheorem{remark}[theorem]{\bf Remark}
\numberwithin{equation}{section} \numberwithin{theorem}{section}
\begin{document}

\title{Large Deviations for Small Noise Diffusions in a Fast Markovian Environment}
\author{Amarjit Budhiraja\thanks{Research supported in part by the National Science
Foundation (DMS-1305120), the Army Research Office (W911NF-14-1-0331) and
DARPA (W911NF-15-2-0122).}, Paul Dupuis\thanks{Research supported in part by
the National Science Foundation (DMS-1317199), the Army Research Office
(W911NF-12-1-0222), and the Defense Advanced Research Projects Agency
(W911NF-15-2-0122).}, Arnab Ganguly\thanks{Research supported in part by
Louisiana Board of Regents through the Board of Regents Support Fund (contract
number: LEQSF(2016-19)-RD-A-04).}}
\maketitle

\begin{abstract}
\noindent A large deviation principle is established for a two-scale
stochastic system in which the slow component is a continuous process given by
a small noise finite dimensional It\^{o} stochastic differential equation, and
the fast component is a finite state pure jump process. Previous works have
considered settings where the coupling between the components is weak in a
certain sense. In the current work we study a fully coupled system in which
the drift and diffusion coefficient of the slow component and the jump
intensity function and jump distribution of the fast process depend on the
states of both components. In addition, the diffusion can be degenerate. Our
proofs use certain stochastic control representations for expectations of
exponential functionals of finite dimensional Brownian motions and Poisson
random measures together with weak convergence arguments. A key challenge is
in the proof of the large deviation lower bound where, due to the interplay
between the degeneracy of the diffusion and the full dependence of the
coefficients on the two components, the associated local rate function has
poor regularity properties. \newline\ \newline

\noindent

\noindent\textbf{AMS 2010 subject classifications:} 60F10, 60J75, 60G35,
60K37.\newline\ \newline

\noindent\textbf{Keywords:} Large deviations, variational representations,
stochastic averaging, averaging principle, small noise asymptotics,
multi-scale analysis, switching diffusions, Markov modulated diffusions,
Poisson random measures.

\end{abstract}



\section{Introduction}

\label{intro} We study a stochastic system with two time scales where the slow
scale evolution is described through a continuous stochastic process, given by
a small noise finite dimensional It\^{o} stochastic differential equation, and
the fast component is given as a rapidly oscillating pure jump process. The
two processes are fully coupled in that the drift and diffusion coefficient of
the slow process and the jump intensity function and jump distribution of the
fast process depend on the states of both components. Multiscale systems of
the form considered in this work arise in many problems from systems biology,
financial engineering, queuing systems, etc. For example, most cellular
processes are inherently multiscale in nature with reactions occurring at
varying speeds. This is especially true in many genetic networks, where
protein concentration, usually modeled by a small-noise diffusion process, is
controlled by different genes rapidly switching between their respective
active and inactive states \cite{cdr:09}. The key characterizing feature of
such slow-fast systems is that the fast component reaches its equilibrium
state at much shorter time scales at which the slow system effectively remains
unchanged. This local equilibration phenomenon allows the approximation of the
properties of the slow system by averaging out the coefficients over the local
stationary distributions of the fast component. Such approximations yield a
significant model simplification and are mathematically justified by
establishing an appropriate \emph{averaging principle}.

The averaging principle, which has its roots in the works of Laplace and
Lagrange, has a long history of applications in celestial mechanics,
oscillation theory, radiophysics, etc. For deterministic systems, the first
rigorous results were obtained by Bogoliubov and Mitropolsky \cite{BM61}, and
further developments and generalizations were subsequently carried out by
Volosov, Anosov, Neishtadt, Arnold and others (for example, see
\cite{ArKoNe06, Neis90}). The stochastic version of the theory originated with
the seminal paper of Khasminskii \cite{Khas68} and later advanced in the works
of Freidlin, Lipster, Skorohod, Veretennikov, Wentzel and others (for example,
see \cite{FW98, Skh76, Ver90}). Stochastic averaging principles for various
models arising from systems biology have been studied in \cite{BKPR06, KK13,
KK14}. As noted above, an averaging principle provides a model simplification
in an appropriate scaling regime. In order to capture the approximation errors
due to the use of such simplified models one needs a more precise asymptotic
analysis. The goal of the current work is to study one such asymptotic result
that gives a \emph{large deviation principle }(LDP) for the slow process as
the parameter governing the magnitude of the small noise in the diffusion
component and the speed of the fast component approaches its limit. Such a
result, in addition to providing estimates on the rate of convergence of the
trajectories of the slow component to that of the averaged system, is a
starting point for developing accelerated Monte-Carlo schemes for the
estimation of probabilities of rare events (cf. \cite{DupSpiWan}).

For a two-scaled system where both components are continuous processes given
through finite dimensional It\^{o} stochastic differential equations, the
problem has been studied in \cite{Lip96, FW98, Ver99, Ver2000}. In all these
works the coupling between the two components is weak in a certain sense. By
this we mean that either the slow component has no diffusion term \cite{FW98,
Ver99}, or the dynamics of the fast component does not depend on the slow one
\cite{Lip96}, or at least the diffusion coefficient of the fast component does
not depend on the slow term \cite{Ver2000}. A recent paper by Puhalskii
\cite{Puh16} studies a large deviation principle for a fully coupled two-scale
diffusion system. Under various conditions on the coefficients of the two
diffusions including in particular certain non-degeneracy conditions on the
diffusion coefficients, the paper uses the exponential tightness and limit
characterization approach of \cite{FK06} to establish a LDP for the slow component.

For settings where the fast component is a jump process, there are only a few
results. In \cite{HeYin14,HMS15} the authors study a large deviation principle
for a two-scale system in which the trajectories of the slow diffusion
component is modulated by a fast moving Markov chain (whose evolution does not
depend on the slow component). An earlier paper, \cite{HYZ11}, considered a
simpler case with no diffusion term in the equation for the slow component.
This simpler case, under a somewhat more restrictive condition, was also
studied by Freidlin and Wentzell in \cite{FW98}. However, in all of these
works the dynamics of the Markov chain do not depend on that of the slow
diffusion component.
Large deviation problems for general two-scale jump diffusions have recently
been considered in \cite{KuPo15}. The authors prove a large deviation
principle for each fixed time $t>0$ using the nonlinear semigroup and
viscosity solution based approach developed in \cite{FK06}; however, a process
level large deviation result is not considered. One of the critical
assumptions in this work is the validity of a comparison principle for a
certain nonlinear Cauchy problem (see Theorem 3 therein). Verification of the
comparison principle is in general a challenging task which needs to be done
on a case by case basis for different systems. In particular the two examples
in \cite{KuPo15} (see Section 4 therein) where the comparison principle is
shown to hold, are of specific forms -- for the first example, the evolution
of the fast component does not depend on the slow component, whereas in the
second example the fast component is a two state Markov chain whose stationary
distribution does not depend on the state of the slow component. We also note
that \cite{KuPo15} makes the assumption that the jump coefficients are
Lipschitz continuous in an appropriate sense. Such a property fails to hold
for systems considered in the current paper; specifically, the integrand in
the second equation in \eqref{fastslow} is not Lipschitz continuous (in fact
not even continuous).

As noted previously, the current paper studies a setting where the two
components are fully coupled. Specifically, for fixed $\varepsilon>0$, we
consider a two component Markov process $(X^{\varepsilon},Y^{\varepsilon})$,
where $X^{\varepsilon}$ is a $d$-dimensional continuous stochastic process
given as the solution of a stochastic equation of the form
\[
dX^{\varepsilon}(t)=b(X^{\varepsilon}(t),Y^{\varepsilon}(t))dt+\sqrt
{\varepsilon}a(X^{\varepsilon}(t),Y^{\varepsilon}(t))dW(t),
\]
where $W$ is a $m$-dimensional Brownian motion, and $Y^{\varepsilon}$ is a
process with a finite state space described in terms of a jump intensity
function $c(\cdot,\cdot)$ and a probability transition kernel $r(\cdot
,\cdot,dy)$, both of which depend on the states of $X^{\varepsilon}$ and
$Y^{\varepsilon}$. We make standard Lipschitz assumptions on the coefficients
of the diffusion, however we do not impose any non-degeneracy restrictions on
the diffusion. In the setting we consider methods based on approximations,
exponential tightness estimates and Girsanov change of measure appear to be
quite hard to implement. One of the main challenges in the analysis is due to
the interplay between the possible degeneracy of the diffusion coefficient and
the dependence of the various coefficients ($b,a,c$ and $r$) on both
components. In our approach we bypass discretizations and approximations by
using certain variational representations of expectations of positive
functionals of Brownian motions and Poisson random measures together with weak
convergence techniques. The variational representations for these noise
processes that we use were developed in \cite{BoDu98, BDM11} and have been
previously used in proving large deviation principles for a variety of complex
systems (see \cite{BDV08, BCD13, BDG14} and references therein). Using these
representations, the proof of the upper bound reduces to proving the tightness
and characterizations of weak limit points of certain controlled versions of
the state process $X^{\varepsilon}$. We note that in the description of these
controlled systems there are two types of controls -- one that controls the
drift of the Brownian noise and the other that controls the intensity of the
underlying Poisson random measure through a random `thinning' function. The
presence of these two controls coupled with the strong dependence of the
coefficients on both the components make the required asymptotic analysis challenging.

The main challenge in this work arises in the proof of the lower bound. When
using the variational representations, the proof of the lower bound requires
the construction of controls which lead to a prescribed limit trajectory with
a prescribed cost. In particular, when multiple times scales are present, one
generally needs to establish the convergence of the empirical measure for the
fast variables to an a priori identified measure (which could depend on the
state of the slow variables). A natural technique is to first show that the
velocities of the trajectory can be made piecewise smooth (e.g., piecewise
constant), so that transition probabilities associated with the fast variables
can be treated as essentially constant over each interval where the velocity
is continuous. Unfortunately, this smoothing in time of the state requires
establishing regularity properties of the local rate function, which is the
function $L(x,\beta)$ when the rate function is written in the somewhat
standard form
\[
I(\xi)=\int_{0}^{T}L(\xi(t),\dot{\xi}(t))dt.
\]
It is the need for these regularity properties which leads to undesirable
assumptions that may not in fact be necessary (e.g., nondegeneracy of a
diffusion coefficient).

We will use a different method to establish convergence that does not rely on
any smoothing in the time variable, and which in particular will allow for
degenerate diffusion coefficients. This alternative approach instead slightly
perturbs the controls used on the noise space (both the control of the
Brownian term that directly impacts the slow variables and the control of the
Poisson term determining evolution of the fast variables), in such a way that
the resulting mapping from controls into the state trajectory is unique. This
uniqueness result is the key to the construction of near optimal controls for
the prelimit process for which the appropriate convergence properties can be
proved and from which the lower bound follows readily. The perturbation
argument and resulting uniqueness, which is given in Proposition
\ref{prop:prop4.1}, is described in detail at the beginning of Section
\ref{lowbd}. The strategy for the proof of Proposition \ref{prop:prop4.1} is
explained in Remark \ref{rem:stratofproof}.

The rest of the paper is organized as follows. In Section \ref{sec:ave} we
give a precise mathematical formulation of the model and the statement of our
main result. The large deviation upper bound is proved in Section \ref{uppbd}.
Section \ref{sec:uniqchae17} constructs suitable near optimal controls and
controlled trajectories with appropriate uniqueness properties. The large
deviation lower bound is proved in Section \ref{lowbd}.

\emph{Notation:} The following mathematical notation and conventions will be
used in the paper. For a Polish space $S$, we denote by $\mathcal{P}(S)$
(resp. $\mathcal{M}_{F}(S)$) the space of probability measures (resp. finite
measures) on $S$ equipped with the topology of weak convergence. We denote by
$C_{b}(S)$ the space of real continuous and bounded functions on $S$. The
space of continuous functions from $[0,T]$ to $S$, equipped with the uniform
topology, will be denoted as $C([0,T]: S)$. For a bounded $\mathbb{R}^{d}$
valued function $f$ on $S$, we define $\|f\|_{\infty} = \sup_{x \in S}
\|f(x)\|$. For a finite set $\mathbb{L}$, we denote by $\mathbb{M}%
(\mathbb{L})$ the space of real functions on $\mathbb{L}$. Cardinality of such
a set will be denoted as $|\mathbb{L}|$. Given a probability function $r:
\mathbb{L} \to[0,1]$ (i.e. $\sum_{x \in\mathbb{L}} r(x) =1$), we denote,
abusing notation, $\sum_{x\in A} r(x)$ by $r(A)$ for all $A \subset\mathbb{L}$
and $\sum_{x\in\mathbb{L}} f(x) r(x)$ by $\int_{\mathbb{L}}f(x) r(dx)$ for all
$f \in\mathbb{M}(\mathbb{L})$. Space of Borel measurable maps from $[0,T]$ to
a metric space $S$ will be denoted as $\mathbb{M}([0,T]:S)$. Infimum over an
empty set, by convention, is taken to be $\infty$.

\section{ Mathematical Preliminaries and Main Result}

\label{sec:ave}

For fixed $\varepsilon>0$, we consider a two component Markov process
$\{(X^{\varepsilon}(t),Y^{\varepsilon}(t))\}_{0\leq t\leq T}$ with values in
$\mathbb{G}=\mathbb{R}^{d}\times\mathbb{L}$, where $\mathbb{L}=\{1,\ldots
,|\mathbb{L}|\}$ is equipped with the usual operation of addition modulo
$|\mathbb{L}|$. A precise stochastic evolution equation for the pair
$(X^{\varepsilon},Y^{\varepsilon})$ will be given below in terms of a
$m$-dimensional Brownian motion and suitable Poisson random measure. However,
roughly speaking, the pair $(X^{\varepsilon},Y^{\varepsilon})$ describes a
jump-diffusion, where the diffusion component (namely $X^{\varepsilon}$) has
\textquotedblleft small noise\textquotedblright\ while the jump component
($Y^{\varepsilon}$) has jumps at rate $O(\varepsilon^{-1})$. The drift and
diffusion coefficients of the continuous component are given by suitable
functions $b:\mathbb{G}\rightarrow\mathbb{R}^{d}$ and $a:\mathbb{G}%
\rightarrow\mathbb{R}^{d\times m}$. The evolution of the pure-jump fast
component is described through a jump intensity function $c:\mathbb{G}%
\rightarrow\lbrack0,\infty)$ and a transition probability function
$r:\mathbb{G}\times\mathbb{L}\times\mathbb{L}\rightarrow\lbrack0,1]$. Our main
assumptions on these functions are as follows.

\begin{assumption}
\label{assum}$\,$

\begin{enumerate}
\item There exists $d_{{\tiny {\mbox{{\em lip}}}}} \in(0,\infty)$ such that
for all $y,y^{\prime}\in\mathbb{L}$ and $x,x^{\prime}\in\mathbb{R}^{d}$,
\[
|c(x,y) - c(x^{\prime},y)| + \|a(x,y) - a(x^{\prime},y)\| + \|b(x,y) -
b(x^{\prime},y)\|+ |r(x,y,y^{\prime}) - r(x^{\prime},y,y^{\prime})| \le
d_{{\tiny {\mbox{{\em lip}}}}} \|x-x^{\prime}\|.
\]

\item $c$ is a bounded function.

\item For all $(x,y) \in\mathbb{G}$, $\sum_{y^{\prime}\in\mathbb{L}}
r(x,y,y^{\prime})=1$, $r(x,y,y)=0$.
\end{enumerate}
\end{assumption}

We will occasionally write $c(x,y), a(x,y), b(x,y), r(x,y,y^{\prime})$ as
$c_{y}(x), a_{y}(x), b_{y}(x), r_{yy^{\prime}}(x)$, respectively.

\begin{remark}
\label{assmp_lingrowth} Assumption \ref{assum}(1) implies that, for some
$\kappa_{1}\in(0,\infty)$,
\[
\Vert b(x,y)\Vert+\Vert a(x,y)\Vert\leq\kappa_{1}(1+\Vert x\Vert
),\mbox{ for all }(x,y)\in{\mathbb{R}}^{d}\times\mathbb{L}.
\]

\end{remark}

Let
\[
\bar{\varsigma}\doteq\sup_{(x,y)\in\mathbb{G}}c_{y}(x),\;\zeta\doteq
\bar{\varsigma}+1
\]
and let $\lambda_{\zeta}$ be Lebesgue measure on $([0,\zeta],\mathcal{B}%
([0,\zeta]))$. For $(x,y,y^{\prime})\in\mathbb{R}^{d}\times\mathbb{L}%
\times\mathbb{L}$, $y\neq y^{\prime}$, let
\begin{equation}
E_{yy^{\prime}}(x)\doteq\lbrack0,c_{y}(x)r_{yy^{\prime}}(x)].
\label{eqn:defEij}%
\end{equation}
From Assumption \ref{assum}, for some $\kappa_{2}\in(0,\infty)$,
\begin{equation}
\sup_{(y,y^{\prime})\in\mathbb{L}\times\mathbb{L},y\neq y^{\prime}}%
\lambda_{\zeta}[E_{yy^{\prime}}(x)\triangle E_{yy^{\prime}}(x^{\prime}%
)]\leq\kappa_{2}\Vert x-x^{\prime}\Vert\label{liptheta}%
\end{equation}
for all $x,x^{\prime}\in\mathbb{R}^{d}$, where $\Delta$ denotes the symmetric
difference. For each fixed $x\in\mathbb{R}^{d}$, the operator $\Pi_{x}$ acting
on $\mathbb{M}(\mathbb{L})$ and defined by%
\begin{align*}
\Pi_{x}\phi(y)  &  \doteq c_{y}(x)\sum_{y^{\prime}\in\mathbb{L}}%
(\phi(y^{\prime})-\phi(y))r_{yy^{\prime}}(x)\\
&  =c_{y}(x)\left(  \sum_{y^{\prime}\in\mathbb{L}}\phi(y^{\prime
})r_{yy^{\prime}}(x)-\phi(y)\right)
\end{align*}
describes the generator of an $\mathbb{L}$-valued Markov process. Let
\[
\hat{r}_{yz}^{n}(x)\doteq\sum_{y^{\prime}\in\mathbb{L}}r_{y^{\prime}z}%
(x)\hat{r}_{yy^{\prime}}^{n-1}(x),\;n>1;\;\hat{r}_{yz}^{1}(x)=r_{yz}(x)
\]
be the $n$-step transition probability kernel of the corresponding embedded
chain. Define
\[
\alpha_{x}\doteq\min_{y,z\in\mathbb{L}}\sum_{n=1}^{|\mathbb{L}|}\hat{r}%
_{yz}^{n}(x),\quad\alpha\doteq\inf_{x}\alpha_{x};
\]%
\[
\underline{\varsigma}_{x}\doteq\min_{y\in\mathbb{L}}c(x,y),\quad
\underline{\varsigma}\doteq\inf_{x}\underline{\varsigma}_{x},\quad
\bar{\varsigma}_{x}\doteq\max_{y\in\mathbb{L}}c(x,y).
\]
Recall that, from Assumption \ref{assum}(2) $\bar{\varsigma}\doteq\sup_{x}%
\bar{\varsigma}_{x}<\infty$. Let
\[
\mathbb{T}\doteq\{(y,y^{\prime})\in\mathbb{L}\times\mathbb{L}:r_{yy^{\prime}%
}(x)>0\mbox{ for some }x\in\mathbb{R}^{d}\}
\]
and let
\[
\kappa_{3}\doteq\inf_{x\in\mathbb{R}^{d}}\min_{(y,y^{\prime})\in\mathbb{T}%
}r_{yy^{\prime}}(x).
\]
We will make the following assumption.

\begin{assumption}
\label{assum2} $\alpha>0$, $\underline{\varsigma}>0$ and $\kappa_{3} >0$.
\end{assumption}

Assumptions \ref{assum} and \ref{assum2} will be taken to hold throughout this
work and will not always be mentioned in the statement of various results. Let
$\mathcal{A}$ be an $|\mathbb{L}|\times|\mathbb{L}|$ matrix with
$\mathcal{A}_{ij}=1$ if $(i,j)\in\mathbb{T}$ and $0$ otherwise. Then,
Assumption \ref{assum2} in particular says that the adjacency matrix
$\mathcal{A}$ is irreducible.

The evolution of $Y^{\varepsilon}$ can be described through a stochastic
differential equation driven by a finite collection of Poisson random measures
which is constructed as follows. For $(i,j)\in\mathbb{T}$ let $\bar{N}_{ij}$
be a Poisson random measure (PRM) on $[0,\zeta]\times\lbrack0,T]\times
\mathbb{R}_{+}$ with intensity measure $\lambda_{\zeta}\otimes\lambda
_{T}\otimes\lambda_{\infty}$, where $\lambda_{T}$ (resp. $\lambda_{\infty}$)
denotes the Lebesgue measure on $[0,T]$ (resp. $\mathbb{R}_{+}$), on some
complete filtered probability space $(\Omega,\mathcal{F},\mathbb{P}%
,\{\mathcal{F}_{t}\}_{0\leq t\leq T})$ such that for $t\in\lbrack0,T]$,
\[
\bar{N}_{ij}(A\times\lbrack0,t]\times B)-t\lambda_{\zeta}(A)\lambda_{\infty
}(B)
\]
is a $\{\mathcal{F}_{t}\}$-martingale for all $A\in\mathcal{B}[0,\zeta]$ and
$B\in\mathcal{B}(\mathbb{R}_{+})$ with $\lambda_{\infty}(B)<\infty$. Then
\[
N_{ij}^{\varepsilon^{-1}}(dr\times dt)\doteq\bar{N}_{ij}(dr\times
dt\times\lbrack0,\varepsilon^{-1}])
\]
is a PRM on $[0,\zeta]\times\lbrack0,T]$ with intensity measure $\varepsilon
^{-1}\lambda_{\zeta}\otimes\lambda_{T}$, and can be regarded as a random
variable with values in $\mathcal{M}_{F}([0,\zeta]\times\lbrack0,T])$, the
space of finite measures on $[0,\zeta]\times\lbrack0,T]$ equipped with the
weak topology. The processes $(\bar{N}_{ij})_{(i,j)\in\mathbb{T}}$ are taken
to be mutually independent. We also suppose that on this filtered probability
space there is an $m$-dimensional $\mathcal{F}_{t}$-Brownian motion
$W=\{W(t)\}_{0\leq t\leq T}$ (which is then independent of $\bar{N}$). In
terms of $W$ and $N^{\varepsilon^{-1}}$, the Markov process $(X^{\varepsilon
},Y^{\varepsilon})\equiv\{(X^{\varepsilon}(t),Y^{\varepsilon}(t))\}_{0\leq
t\leq T}$ with initial condition $(x_{0},y_{0})\in\mathbb{G}$ is defined as
the unique pathwise solution of the following system of equations:
\begin{equation}
\left.
\begin{aligned} dX^{\varepsilon}(t) &= b(X^{\varepsilon}(t), Y^{\varepsilon}(t)) dt + \sqrt{\varepsilon} a(X^{\varepsilon}(t), Y^{\varepsilon}(t)) dW(t),&\quad X^{\varepsilon}(0) = x_0\\ dY^{\varepsilon}(t) &= \sum_{(i,j) \in \mathbb{T}}\int_{r \in [0,\zeta]} (j-i) 1_{\{Y^{\varepsilon}(t-)=i\}} 1_{E_{ij}(X^{\varepsilon}(t))}(r) N_{ij}^{\varepsilon^{-1}}( dr \times dt),&\quad Y^{\varepsilon}(0) = y_0. \end{aligned}\right\}
\label{fastslow}%
\end{equation}
From unique pathwise solvability it follows that for every $\varepsilon>0$,
there exists a measurable map $\mathcal{G}^{\varepsilon}:C([0,T]:\mathbb{R}%
^{m})\times(\mathcal{M}_{F}([0,\zeta]\times\lbrack0,T]))^{|\mathbb{T}%
|}\rightarrow C([0,T]:\mathbb{R}^{d})$ such that $X^{\varepsilon}%
=\mathcal{G}^{\varepsilon}(\sqrt{\varepsilon}W,\{\varepsilon N_{ij}%
^{\varepsilon^{-1}}\}_{ij})$.

The following is an immediate consequence of our assumptions.

\begin{theorem}
\label{assum3} For each $x\in\mathbb{R}^{d}$, there is a unique invariant
probability measure, $\nu(x)$ for the $\mathbb{L}$-valued Markov process with
generator $\Pi_{x}$.
\end{theorem}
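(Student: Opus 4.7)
The plan is to reduce the claim to the classical fact that an irreducible continuous-time Markov chain on a finite state space admits a unique invariant probability measure.

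First I would observe that the generator $\Pi_x$, in the form $\Pi_x\phi(y) = c_y(x)\sum_{y'}(\phi(y')-\phi(y))r_{yy'}(x)$, corresponds to a $Q$-matrix on the finite set $\mathbb{L}$ with off-diagonal entries $Q_x(y,y') = c_y(x)\,r_{yy'}(x)$ for $y\ne y'$, and diagonal entries $Q_x(y,y) = -c_y(x)$, using Assumption \ref{assum}(3) that $r_{yy}(x)=0$ and $\sum_{y'}r_{yy'}(x)=1$. An invariant probability $\nu(x)$ is then characterized as the unique (up to normalization) nonnegative solution of the linear system $\nu(x)Q_x = 0$, and existence/uniqueness reduces to irreducibility of the chain.

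Next I would verify irreducibility using Assumption \ref{assum2}. The embedded discrete-time chain has one-step transition matrix $r_{yy'}(x)$ and $n$-step kernel $\hat r^n_{yy'}(x)$. The hypothesis $\alpha>0$ means that for every $y,z\in\mathbb{L}$, $\sum_{n=1}^{|\mathbb{L}|}\hat r^n_{yz}(x)>0$, so there exists $n\le |\mathbb{L}|$ with $\hat r^n_{yz}(x)>0$; equivalently, every pair $(y,z)$ is connected by a directed path in the adjacency matrix $\mathcal{A}$, and $\mathcal{A}$ is irreducible. Since in addition $\underline{\varsigma}>0$ ensures $c_y(x)>0$ for every $y$, each such path has strictly positive rate along every edge, hence positive probability in the continuous-time chain. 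Consequently the chain generated by $\Pi_x$ is irreducible on the finite set $\mathbb{L}$.

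Finally, I would invoke the standard result from finite-state Markov chain theory: an irreducible continuous-time Markov chain on a finite state space has a unique invariant probability measure (equivalently, $0$ is a simple eigenvalue of $Q_x$, with a strictly positive left eigenvector, by Perron–Frobenius applied to $e^{tQ_x}$ for $t$ large enough, or alternatively to the stochastic matrix $I + Q_x/\|Q_x\|$). This yields the unique invariant measure $\nu(x)$. There is essentially no obstacle here; the only point requiring care is the step from irreducibility of the embedded chain to irreducibility of the continuous-time chain, which is where the assumption $\underline{\varsigma}>0$ is needed.
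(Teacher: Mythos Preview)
Your argument is correct and is precisely the reasoning the paper has in mind: the paper does not give a proof, stating only that the result is ``an immediate consequence of our assumptions,'' and the subsequent proof of Lemma~\ref{invmeas_lip2} makes explicit exactly the ingredients you use (ergodicity of the embedded chain from $\alpha>0$, together with $\underline{\varsigma}>0$ and the relation $\nu_y(x)\propto \pi_y(x)/c(x,y)$). There is nothing to add.
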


\begin{lemma}
\label{invmeas_lip2} The mapping $x\in{\mathbb{R}}^{d}\rightarrow\nu
(x)\in\mathcal{P}(\mathbb{L})$ is Lipschitz continuous with the Lipschitz
constant $L_{{\tiny {\mbox{{\em lip}}}}}^{\nu}$ (with respect to the total
variation metric) depending only on $\alpha,\underline{\varsigma}%
,\bar{\varsigma},\kappa_{2}$ and $\kappa_{3}$. Furthermore, $\inf
_{x\in\mathbb{R}^{d}}\min_{y\in\mathbb{L}}\nu_{y}(x)\doteq\underline{\nu}>0$.
\end{lemma}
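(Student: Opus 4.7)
The plan is to reduce the lemma to the unique invariant probability measure $\pi(x)\in\mathcal{P}(\mathbb{L})$ of the discrete-time embedded chain with one-step transition kernel $r(x,\cdot,\cdot)$, which is well defined and unique because of the irreducibility implicit in Assumption \ref{assum2}. The two invariant measures are linked by the standard identity
\begin{equation*}
\nu_y(x) \;=\; \frac{\pi_y(x)/c_y(x)}{\sum_{y' \in \mathbb{L}}\pi_{y'}(x)/c_{y'}(x)},
\end{equation*}
so it suffices to prove the analogous lower bound and Lipschitz property for $\pi$.

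For the lower bound I would first iterate invariance to write $\pi_y(x) = \sum_z \pi_z(x)\hat r^n_{zy}(x)$ for every $n\ge 1$, then average over $n=1,\ldots,|\mathbb{L}|$ and invoke the definition of $\alpha$ to conclude $\pi_y(x)\ge \alpha/|\mathbb{L}|$ uniformly in $x$. Plugging this into the displayed identity together with $\underline{\varsigma}\le c_y(x)\le \bar{\varsigma}$ gives $\nu_y(x) \ge \alpha\underline{\varsigma}/(|\mathbb{L}|^{2}\bar{\varsigma})$, settling the second assertion.

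For Lipschitz continuity I would introduce the Cesaro average $S(x) \doteq |\mathbb{L}|^{-1}\sum_{n=1}^{|\mathbb{L}|} r(x)^n$. Three features of $S(x)$ make it useful: $\pi(x)S(x) = \pi(x)$; by Assumption \ref{assum2} one has the entrywise minorization $S(x)(y,z) \ge \alpha/|\mathbb{L}|$ uniformly in $x$, so the Dobrushin coefficient satisfies $\delta(S(x)) \le 1-\alpha$; and consequently $\|\mu S(x)\|_{TV} \le (1-\alpha)\|\mu\|_{TV}$ for every signed measure $\mu$ on $\mathbb{L}$ with $\mu(\mathbb{L})=0$. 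Applying this to the decomposition
\begin{equation*}
\pi(x)-\pi(x') \;=\; \pi(x)\bigl(S(x)-S(x')\bigr) + \bigl(\pi(x)-\pi(x')\bigr)S(x')
\end{equation*}
yields $\alpha\|\pi(x)-\pi(x')\|_{TV} \le \|\pi(x)(S(x)-S(x'))\|_{TV}$. The right hand side is estimated via the telescoping identity $r(x)^n - r(x')^n = \sum_{k=0}^{n-1}r(x)^k(r(x)-r(x'))r(x')^{n-1-k}$, which reduces it to a multiple of $\sup_y\sum_{y'}|r_{yy'}(x)-r_{yy'}(x')|$. Since (\ref{liptheta}) only supplies a Lipschitz bound on the product $c_y r_{yy'}$, I would extract a Lipschitz bound on bare $r_{yy'}$ by writing $r_{yy'} = (c_y r_{yy'})/c_y$, using $|c_y(x)-c_y(x')| \le (|\mathbb{L}|-1)\kappa_2\|x-x'\|$ (which follows from $c_y = \sum_{y'\ne y}(c_y r_{yy'})$), and dividing with the help of $c_y \ge \underline{\varsigma}$. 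Lipschitz continuity of $\nu$ then follows from the displayed identity, the Lipschitz bound on $\pi$, and the bounds $\underline{\varsigma}\le c_y \le \bar{\varsigma}$.

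I expect the only somewhat delicate point to be the bookkeeping needed to see that the final Lipschitz constant depends only on $\alpha,\underline{\varsigma},\bar{\varsigma},\kappa_2,\kappa_3$. The factor $\alpha$ supplies the required inverse in the closure of the contraction inequality, and $\underline{\varsigma}$ is what allows the division step in recovering a Lipschitz bound on bare $r$ from that on $c r$; the remaining constants enter only through uniform bounds on factors such as $c$ and the normalizing denominator in the displayed formula.
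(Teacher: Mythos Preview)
Your proof is correct and shares with the paper the reduction to the embedded-chain invariant measure $\pi(x)$ via $\nu_y(x)\propto \pi_y(x)/c_y(x)$, as well as the use of the Ces\`aro-averaged kernel $S(x)=|\mathbb{L}|^{-1}\sum_{n=1}^{|\mathbb{L}|}\hat r^n(x)$ (the paper calls it $p^x$). The difference lies in how Lipschitz continuity of $x\mapsto\pi(x)$ is obtained: the paper simply invokes an explicit formula (Lemma 3.1 of \cite{FW98}) expressing $\pi(x)$ as a ratio of polynomials in the entries of $p^x$, so Lipschitz continuity of those entries immediately transfers; you instead run a Dobrushin contraction argument via the decomposition $\pi(x)-\pi(x')=\pi(x)(S(x)-S(x'))+(\pi(x)-\pi(x'))S(x')$ and the uniform minorization $S(x)_{yz}\ge\alpha/|\mathbb{L}|$. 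Your route is self-contained (no external citation needed) and arguably cleaner, at the cost of a few more lines. Your care in recovering a Lipschitz bound on $r_{yy'}$ from the bound on $c_y r_{yy'}$ in (\ref{liptheta}) using $c_y\ge\underline{\varsigma}$ is also well placed: the lemma asserts dependence of $L_{\text{lip}}^\nu$ only on $\alpha,\underline{\varsigma},\bar{\varsigma},\kappa_2,\kappa_3$, and this step is exactly what keeps $d_{\text{lip}}$ from entering directly---a point the paper's proof glosses over by appealing to Assumption \ref{assum}(1). One small note: your bounds silently carry factors of $|\mathbb{L}|$, but since $\mathbb{L}$ is fixed throughout the paper this is harmless.
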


\begin{proof}
Since $\alpha>0$, the $\mathbb{L}$-valued Markov chain with transition
probabilities
\[
p_{yy^{\prime}}^{x}=\frac{1}{|\mathbb{L}|}\sum_{n=1}^{|\mathbb{L}|}\hat
{r}_{yy^{\prime}}^{n}(x),\;y,y^{\prime}\in\mathbb{L},
\]
is ergodic for each $x\in\mathbb{R}^{d}$. Since from Assumption \ref{assum}
$x\mapsto r_{yy^{\prime}}(x)$ is Lipschitz continuous, it follows that
$p_{yy^{\prime}}^{x}$ is Lipschitz continuous in $x$ and $\inf_{y,y^{\prime
}\in\mathbb{L}}\inf_{x\in\mathbb{R}^{d}}p_{yy^{\prime}}^{x}>0$. Denote the
unique invariant measure of this chain by $\pi(x)$. From Lemma 3.1 in
\cite{FW98}, $\pi(x)$ is given as a ratio of polynomials in $\{p_{yz}%
^{x}\}_{y,z\in\mathbb{L}}$. Thus $x\mapsto\pi_{y}(x)$ is Lipschitz continuous
for every $y\in\mathbb{L}$ (with Lipschitz constant depending on $\kappa
_{2},\kappa_{3}$). The lemma now follows on observing that $\nu_{y}%
(x)\propto\frac{\pi_{y}(x)}{c(x,y)},$ and hence the assertion follows from the
Lipschitz property of $x\mapsto c(x,y)$ and the properties $\underline
{\varsigma}>0$ and $\bar{\varsigma}<\infty$.
\end{proof}

\begin{lemma}
\label{lem:lemloclip} Let $f : {\mathbb{R}}^{d} \times\mathbb{L}
\rightarrow{\mathbb{R}}$ satisfy
\[
|f(x,y) - f(x^{\prime},y)|\leq L^{f}_{{\tiny {\mbox{{\em lip}}}}}
\|x-x^{\prime}\|, \quad x,x^{\prime}\in{\mathbb{R}}^{d}, y\in\mathbb{L}%
\]
for some $L^{f}_{{\tiny {\mbox{{\em lip}}}}}\in(0,\infty)$. Define $\hat f(x)
= \sum_{y \in\mathbb{L}} f(x,y) \nu_{y}(x). $ Then $\hat f$ is a locally
Lipschitz function on ${\mathbb{R}}^{d}$ with linear growth.
\end{lemma}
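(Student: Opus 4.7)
The plan is to decompose the difference $\hat f(x) - \hat f(x')$ by adding and subtracting $\sum_y f(x',y)\nu_y(x)$, so that
\[
|\hat f(x) - \hat f(x')| \leq \sum_{y\in\mathbb{L}} |f(x,y)-f(x',y)|\,\nu_y(x) + \sum_{y\in\mathbb{L}} |f(x',y)|\,|\nu_y(x)-\nu_y(x')|.
\]
The first sum is bounded by $L^f_{\mbox{\emph{lip}}}\|x-x'\|$ by hypothesis (since $\sum_y \nu_y(x)=1$). For the second sum, I would use Lemma \ref{invmeas_lip2}, which gives $\sum_{y\in\mathbb{L}} |\nu_y(x)-\nu_y(x')| \le 2L^\nu_{\mbox{\emph{lip}}}\|x-x'\|$ (up to a factor depending on how total variation is normalized; in any case a constant multiple), together with the bound $\max_{y\in\mathbb{L}} |f(x',y)| \le \max_{y\in\mathbb{L}} |f(0,y)| + L^f_{\mbox{\emph{lip}}}\|x'\|$ coming from the Lipschitz assumption on $f$. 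Combining these gives
\[
|\hat f(x) - \hat f(x')| \leq \bigl( L^f_{\mbox{\emph{lip}}} + 2L^\nu_{\mbox{\emph{lip}}}(\max_{y}|f(0,y)| + L^f_{\mbox{\emph{lip}}}\|x'\|) \bigr)\|x-x'\|,
\]
which is the desired local Lipschitz estimate, with a constant that grows linearly in $\|x'\|$ (so $\hat f$ is Lipschitz on each bounded set).

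For the linear growth bound, I would note that since $\nu(x)$ is a probability measure on $\mathbb{L}$,
\[
|\hat f(x)| \leq \max_{y\in\mathbb{L}}|f(x,y)| \leq \max_{y\in\mathbb{L}}|f(0,y)| + L^f_{\mbox{\emph{lip}}}\|x\|,
\]
so $\hat f$ has linear growth with constant depending only on $L^f_{\mbox{\emph{lip}}}$ and the finite quantity $\max_y |f(0,y)|$.

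I do not anticipate any genuine obstacle: the only nontrivial input is the Lipschitz continuity of $x\mapsto \nu(x)$ in total variation, which is already supplied by Lemma \ref{invmeas_lip2}, and the rest is the standard product-rule style estimate. The one minor point to be careful about is that $\hat f$ is not globally Lipschitz in general (since $f$ itself need not be bounded), which is precisely why the conclusion is stated as \emph{locally} Lipschitz with linear growth rather than globally Lipschitz.
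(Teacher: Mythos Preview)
Your proposal is correct and follows essentially the same approach as the paper: the same add-and-subtract decomposition of $\hat f(x)-\hat f(x')$, Lipschitz of $f$ for the first sum, and Lemma \ref{invmeas_lip2} combined with linear growth of $f$ for the second. Your version is in fact slightly sharper in the constants (you use $\sum_y\nu_y(x)=1$ rather than the cruder bound by $|\mathbb{L}|$), but the structure is identical.
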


\begin{proof}
The linear growth property is clear from the Lipschitz property of $f$. The
local Lipschitz property follows by noting that for any compact $K\subset
{\mathbb{R}}^{d}$ and $x,x^{\prime}\in K$
\begin{align*}
|\hat{f}(x)-\hat{f}(x^{\prime})|  &  \leq\sum_{y\in\mathbb{L}}\left[  \nu
_{y}(x)|f(x,y)-f(x^{\prime},y)|+\sup_{x\in K}|f(x,y)||\nu_{y}(x)-\nu
_{y}(x^{\prime})|\right] \\
&  \leq\left(  L_{{\tiny {\mbox{lip}}}}^{f}+\max_{y\in\mathbb{L}}\sup_{x\in
K}|f(x,y)|L_{{\tiny {\mbox{lip}}}}^{\nu}\right)  \left\vert \mathbb{L}%
\right\vert \Vert x-x^{\prime}\Vert.
\end{align*}
\end{proof}

Let $\hat{b}(x)=\sum_{y\in\mathbb{L}}b(x,y)\nu_{y}(x)$, and note that by Lemma
\ref{lem:lemloclip} $\hat{b}$ is locally Lipschitz function with linear
growth. The proof of the following theorem follows along the lines of
\cite[Chapter 2, Theorem 8]{Skh76}. We omit the details since a similar result
in a controlled setting will be shown in Proposition \ref{ctrlmeas_tight}.

\begin{theorem}
\label{thm:lln} Fix $(x_{0},y_{0}) \in\mathbb{G}$. Let $(X^{\varepsilon},
Y^{\varepsilon})$ be the solution of \eqref{fastslow}. Then as $\varepsilon
\rightarrow0$, $X^{\varepsilon}$ converges uniformly on compacts in
probability to the unique solution of
\begin{equation}
\label{ode}\frac{d \xi(t)}{dt} = \hat b (\xi(t)), \quad\xi(0) = x_{0}.
\end{equation}

\end{theorem}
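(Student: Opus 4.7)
My plan is to follow the classical Khasminskii-Skorohod averaging strategy: establish tightness of the slow process and of an occupation measure that encodes the behavior of the fast process, identify every weak limit point through the martingale problem associated with the fast generator $\Pi_x$, and then pass to the limit in the integral equation for $X^\varepsilon$. Uniqueness of the limiting ODE then upgrades subsequential convergence to full convergence in probability.

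First I would show tightness of $\{X^\varepsilon\}$ in $C([0,T]:\mathbb{R}^d)$. The linear growth bound from Remark \ref{assmp_lingrowth}, combined with Gr\"onwall's inequality and the Burkholder-Davis-Gundy inequality applied to the martingale $\sqrt{\varepsilon}\int_0^\cdot a(X^\varepsilon(s),Y^\varepsilon(s))\,dW(s)$, yields $\mathbb{E}\sup_{t\le T}\|X^\varepsilon(t)\|^2 \le C$ and the standard H\"older-type increment estimate needed for tightness. Next I introduce the random occupation measure $\mu^\varepsilon \in \mathcal{P}([0,T]\times \mathbb{L})$ defined by $\mu^\varepsilon(dt\,dy)=\delta_{Y^\varepsilon(t)}(dy)\,dt/T$; since $[0,T]\times \mathbb{L}$ is compact, $\{\mu^\varepsilon\}$ is automatically tight. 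By Skorokhod's representation I pass to a subsequence along which $(X^\varepsilon,\mu^\varepsilon)\to(\xi,\mu)$ a.s.\ in $C([0,T]:\mathbb{R}^d)\times \mathcal{P}([0,T]\times\mathbb{L})$.

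The central step is characterizing $\mu$. For any $\phi\in \mathbb{M}(\mathbb{L})$,
\[
M^\varepsilon_\phi(t)\doteq\phi(Y^\varepsilon(t))-\phi(y_0)-\varepsilon^{-1}\int_0^t (\Pi_{X^\varepsilon(s)}\phi)(Y^\varepsilon(s))\,ds
\]
is an $\{\mathcal{F}_t\}$-martingale whose jumps are bounded by $2\|\phi\|_\infty$ and whose predictable quadratic variation grows like $\varepsilon^{-1}$. Therefore $\varepsilon M^\varepsilon_\phi(t)\to 0$ in $L^2$, uniformly in $t\in[0,T]$, which forces
\[
\int_0^t (\Pi_{X^\varepsilon(s)}\phi)(Y^\varepsilon(s))\,ds\longrightarrow 0\quad\text{in probability for every }t\in[0,T].
\]
Using the joint continuity in $x$ of $\Pi_x\phi$ (which is Lipschitz by Assumption \ref{assum}) together with the uniform convergence $X^\varepsilon\to\xi$, the left side converges to $\int_0^t\int_{\mathbb{L}}(\Pi_{\xi(s)}\phi)(y)\,\mu_s(dy)\,ds$, where $\mu(dt\,dy)=\mu_t(dy)\,dt/T$ (the disintegration exists because the time marginal of $\mu^\varepsilon$ is deterministic). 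Hence for a.e.\ $s$, $\mu_s$ is annihilated by $\Pi_{\xi(s)}^\ast$. Since $\alpha>0$ implies $\Pi_x$ has a unique invariant distribution $\nu(x)$ (Theorem \ref{assum3}), I conclude $\mu_s(dy)=\nu_y(\xi(s))\,dy$ (as a measure on $\mathbb{L}$) for a.e.\ $s$.

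Finally I pass to the limit in $X^\varepsilon(t)=x_0+\int_0^t b(X^\varepsilon(s),Y^\varepsilon(s))\,ds+\sqrt{\varepsilon}\int_0^t a(X^\varepsilon(s),Y^\varepsilon(s))\,dW(s)$. The stochastic integral vanishes in $L^2$ by the bound on $a$, while
\[
\int_0^t b(X^\varepsilon(s),Y^\varepsilon(s))\,ds=\int_{[0,t]\times \mathbb{L}}b(X^\varepsilon(s),y)\,T\mu^\varepsilon(ds\,dy)\longrightarrow \int_0^t\sum_{y\in\mathbb{L}}b(\xi(s),y)\nu_y(\xi(s))\,ds=\int_0^t\hat b(\xi(s))\,ds,
\]
using the Lipschitz continuity of $b$ in $x$ and the identification of $\mu$. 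Thus $\xi$ satisfies \eqref{ode}; since $\hat b$ is locally Lipschitz (Lemma \ref{lem:lemloclip}) and $\xi$ has a priori linear-growth bounds from the tightness step, the limit ODE has a unique solution, and every subsequential weak limit coincides with it. This promotes convergence in distribution to convergence in probability, uniformly on compacts. The main obstacle is the occupation-measure step, and in particular justifying the exchange of limits that identifies $\mu_s$ with $\nu(\xi(s))$; the Lipschitz continuity of $x\mapsto\Pi_x\phi$ together with the $\mathrm{O}(\varepsilon)$ estimate on $\varepsilon M^\varepsilon_\phi$ is exactly what is needed to carry this through without any nondegeneracy assumption on $a$.
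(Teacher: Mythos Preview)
Your proposal is correct and follows essentially the same route the paper indicates: the paper omits the proof of Theorem~\ref{thm:lln} and refers to Proposition~\ref{ctrlmeas_tight}, whose proof in the uncontrolled case reduces precisely to your argument---tightness of $X^\varepsilon$ via moment/Gr\"onwall estimates, tightness of an occupation measure for $Y^\varepsilon$, the martingale identity for $\phi(Y^\varepsilon)$ (your $M^\varepsilon_\phi$, cf.\ \eqref{Ito_form}) to force the limiting occupation measure to satisfy the invariance condition for $\Pi_{\xi(s)}$, and then passage to the limit in the integral equation using the Lipschitz continuity of $b$. The only cosmetic difference is that Proposition~\ref{ctrlmeas_tight} works with the $\Delta_\varepsilon$-averaged measure $Q^\varepsilon$ of \eqref{eqn:defofQ}, which is needed there to accommodate the controls; in the uncontrolled setting your simpler choice $\mu^\varepsilon(ds\,dy)=\delta_{Y^\varepsilon(s)}(dy)\,ds/T$ is entirely adequate.
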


The unique solvability of \eqref{ode} is a consequence of the properties of
$\hat{b}$ stated before the theorem.

The solution $X^{\varepsilon}$ of the system \eqref{fastslow} can be regarded
as a $C([0,T]:\mathbb{R}^{d})$-valued random variable. The main result of this
work establishes a large deviation principle (LDP) for $X^{\varepsilon}$ in
$C([0,T]:\mathbb{R}^{d})$ as $\varepsilon\rightarrow0$. In rest of this
section we formulate the rate function for $\{X^{\varepsilon}\}$ and present
our main result.

\subsection{Rate function}

Recall that $\mathbb{M}([0,T]:\mathcal{P}(\mathbb{L}))$, $\mathbb{M}%
([0,T]:\mathbb{R}^{d})$ denote the space of measurable maps from $[0,T]$ to
$\mathcal{P}(\mathbb{L})$ and from $[0,T]$ to $\mathbb{R}^{d}$ respectively.
For $\psi=(\psi_{j})_{j\in\mathbb{L}}$, with $\psi_{j}:[0,\zeta]\rightarrow
\mathbb{R}_{+}$ a measurable map for every $j$, let
\begin{equation}
A_{ij}^{\psi}(x)=\left\{
\begin{array}
[c]{cc}%
\rho_{ij}^{\psi}(x), & i\neq j\\
-\sum_{y:y\neq j}\rho_{jy}^{\psi}(x) & i=j,
\end{array}
\right.  \label{eq:eq855}%
\end{equation}
where for $i,j\in\mathbb{L}$, $i\neq j$, $x\in\mathbb{R}^{d}$
\begin{equation}
\rho_{ij}^{\psi}(x)\doteq\int_{E_{ij}(x)}\psi_{j}(z)\lambda_{\zeta}(dz).
\label{eq:contrates}%
\end{equation}
Note that any $\psi$ as above, such that $\psi_{j}$ is integrable for each
$j$, can be identified with $\eta=(\eta_{j})_{j\in\mathbb{L}}$ such that for
each $j$, $\eta_{j}\in\mathcal{M}_{F}([0,\zeta])$ on setting $\eta
_{j}(dz)\doteq\psi_{j}(z)dz$. More generally, for $x\in\mathbb{R}^{d}$, and
any $\eta=(\eta_{j})_{j\in\mathbb{L}}$ such that each $\eta_{j}\in
\mathcal{M}_{F}([0,\zeta])$, we define $A^{\eta}(x)$ as
\begin{equation}
A_{ij}^{\eta}(x)=\left\{
\begin{array}
[c]{cc}%
\eta_{j}(E_{ij}(x)), & i\neq j\\
\  & \\
-\sum_{y:y\neq j}\eta_{y}(E_{jy}(x)) & i=j.
\end{array}
\right.  \label{eq:eq855mzr}%
\end{equation}
We will make use of such $A^{\eta}$ in the next section. Although the
introduction of a second notation for the controlled intensities is
regrettable, the measure formulation is more natural when discussing topologies.

Define $\ell:[0,\infty)\rightarrow\lbrack0,\infty)$ by $\ell(x)\doteq x\log
x-x+1$ and let
\[
\mathcal{R}\doteq\{\varphi=(\varphi_{ij})_{(i,j)\in\mathbb{T}}:\varphi
_{ij}:[0,T]\times\lbrack0,\zeta]\rightarrow\mathbb{R}_{+}\mbox{ is a
measurable map},(i,j)\in\mathbb{T}\}.
\]
For $\xi\in C([0,T]:\mathbb{R}^{d})$, define
\begin{equation}
I(\xi)\doteq\inf_{(u,\varphi,\pi)\in\mathcal{A}(\xi)}\left\{  \sum
_{i\in\mathbb{L}}\frac{1}{2}\int_{0}^{T}\Vert u_{i}(s)\Vert^{2}\pi
_{i}(s)ds+\sum_{(i,j)\in\mathbb{T}}\int_{[0,\zeta]\times\lbrack0,T]}%
\ell(\varphi_{ij}(s,z))\pi_{i}(s)\lambda_{\zeta}(dz)ds\right\}  ,
\label{eq:rtfnnew}%
\end{equation}
where $\mathcal{A}(\xi)$ is the collection of all
\[
(u=(u_{i}),\varphi=(\varphi_{ij}),\pi=(\pi_{i}))\in\mathbb{M}([0,T]:\mathbb{R}%
^{m})^{|\mathbb{L}|}\times\mathcal{R}\times\mathbb{M}([0,T]:\mathcal{P}%
(\mathbb{L}))
\]
such that $\int_{0}^{T}\Vert u_{i}(s)\Vert^{2}\pi_{i}(s)ds<\infty$ for each
$i\in\mathbb{L}$,
\begin{equation}
\xi(t)=x_{0}+\sum_{j\in\mathbb{L}}\int_{0}^{t}b_{j}(\xi(s))\pi_{j}%
(s)ds+\sum_{j\in\mathbb{L}}\int_{0}^{t}a_{j}(\xi(s))u_{j}(s)\pi_{j}%
(s)ds,\;t\in\lbrack0,T] \label{eq:stateq}%
\end{equation}
and
\begin{equation}
\sum_{i\in\mathbb{L}}\pi_{i}(s)A_{ij}^{\varphi_{i,\cdot}(s,\cdot)}%
(\xi(s))=0,\mbox{ for a.e. }s\in\lbrack0,T]\mbox{ and }j\in\mathbb{L},
\label{eq:eqinvar}%
\end{equation}
where $\varphi_{i,\cdot}=(\varphi_{i,j})_{j\in\mathbb{L}}$ (with the
convention $\varphi_{i,j}=1$ if $(i,j)\notin\mathbb{T}$). Equation
\eqref{eq:eqinvar} characterizes the invariant distributions that would be
associated with controlled PRMs with controls $\varphi_{ij}$, which influence
the rate of transition from $i$ to $j$ through (\ref{eq:contrates}). This form
of the rate function is very much analogous to the control formulation of a
small noise diffusion as in \cite{BoDu98}. In \eqref{eq:eqinvar} we follow the
convention that $0\cdot\infty=0$ and $\infty-\infty=\infty$.

The following is the main result of this work. Recall that Assumptions
\ref{assum} and \ref{assum2} are taken to hold throughout the paper. A
function $I:C([0,T]:\mathbb{R}^{d})\rightarrow\lbrack0,\infty]$ is called a
rate function on $C([0,T]:\mathbb{R}^{d})$ if it has compact sub-level sets,
namely for every $\alpha\in(0,\infty)$, the set $\{\xi\in C([0,T]:\mathbb{R}%
^{d}):I(\xi)\leq\alpha\}$ is a compact subset of $C([0,T]:\mathbb{R}^{d})$.

\begin{theorem}
\label{thm:avg} The map $I$ in \eqref{eq:rtfnnew} is a rate function on
$C([0,T]:\mathbb{R}^{d})$ and $\left\{  X^{\varepsilon}\right\}
_{\varepsilon>0}$ satisfies the Laplace principle on $C([0,T]:\mathbb{R}^{d}%
)$, as $\varepsilon\rightarrow0$, with rate function $I$. Namely, for all
$F\in C_{b}(C([0,T]:\mathbb{R}^{d}))$,
\begin{equation}
\lim_{\varepsilon\rightarrow0}-\varepsilon\log\mathbb{E}\left[  \exp\left(
-\varepsilon^{-1}F(X^{\varepsilon})\right)  \right]  =\inf_{\xi\in
C([0,T]:\mathbb{R}^{d})}\{F(\xi)+I(\xi)\}. \label{maintoshowlappri}%
\end{equation}

\end{theorem}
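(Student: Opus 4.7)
The plan is to use the variational representations for exponential functionals of Brownian motions and Poisson random measures developed in \cite{BoDu98, BDM11}. These give
\[
-\varepsilon\log\mathbb{E}[\exp(-\varepsilon^{-1}F(X^{\varepsilon}))]
=\inf_{(u^{\varepsilon},\varphi^{\varepsilon})}\mathbb{E}\Big[F(\bar{X}^{\varepsilon})+\tfrac12\int_{0}^{T}\|u^{\varepsilon}(s)\|^{2}ds+\sum_{(i,j)\in\mathbb{T}}\int_{[0,\zeta]\times[0,T]}\ell(\varphi^{\varepsilon}_{ij}(s,z))\,ds\,\lambda_{\zeta}(dz)\Big],
\]
where $\bar{X}^{\varepsilon}$ is the controlled analogue of \eqref{fastslow}, driven by a Brownian motion with drift $u^{\varepsilon}$ and PRMs thinned by $\varphi^{\varepsilon}$. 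Theorem \ref{thm:avg} will follow by matching the right-hand side with $\inf_{\xi}\{F(\xi)+I(\xi)\}$ in the limit $\varepsilon\to 0$, through matching upper and lower bounds.

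For the upper bound I would take a sequence of near-optimal controls $(u^{\varepsilon},\varphi^{\varepsilon})$ with uniformly bounded cost and encode the control triple as the occupation measure of $Y^{\varepsilon}$ on $[0,T]\times\mathbb{L}$ together with an appropriate relaxed representation of $(u^{\varepsilon},\varphi^{\varepsilon})$. Superlinearity of $\ell(\cdot)$ and of $\|\cdot\|^{2}$ forces tightness of the resulting measures, and the moment bounds following from Remark \ref{assmp_lingrowth} give tightness of $\bar{X}^{\varepsilon}$ in $C([0,T]:\mathbb{R}^{d})$. Along a subsequence one can pass to the weak limit to obtain a limit triple $(u,\varphi,\pi)$; using stochastic averaging for controlled PRMs (the route indicated by Proposition \ref{ctrlmeas_tight}) one verifies that the limit $\xi$ satisfies the state equation \eqref{eq:stateq} and that $\pi_{\cdot}(s)$ is almost surely invariant for the controlled rates, i.e.\ \eqref{eq:eqinvar}. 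Lower semicontinuity of the cost then gives the desired upper bound $\limsup_{\varepsilon}-\varepsilon\log\mathbb{E}[\exp(-\varepsilon^{-1}F(X^{\varepsilon}))]\le\inf_{\xi}\{F(\xi)+I(\xi)\}$.

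The lower bound is the main obstacle. Given $\xi$ with $I(\xi)<\infty$ and $\delta>0$, choose a triple $(u,\varphi,\pi)\in\mathcal{A}(\xi)$ whose cost is within $\delta$ of $I(\xi)$. The classical device of making the velocity piecewise constant and then freezing the fast-chain dynamics on each piece requires regularity of the local rate function that is unavailable when $a$ is degenerate, since distinct controls can yield the same trajectory. Following the strategy flagged in Remark \ref{rem:stratofproof}, I would instead slightly perturb $(u,\varphi)$ so that the mapping from controls to trajectories becomes single-valued on a neighborhood of the perturbed control (this is the content of Proposition \ref{prop:prop4.1}). These perturbed controls are then lifted to admissible prelimit controls $(u^{\varepsilon},\varphi^{\varepsilon})$; tightness/weak-convergence arguments parallel to the upper bound identify the subsequential limit of the corresponding $\bar{X}^{\varepsilon}$, and the uniqueness guaranteed by Proposition \ref{prop:prop4.1} forces this limit to coincide with $\xi$ (or an arbitrarily small perturbation of it). Plugging these controls into the representation and sending $\varepsilon\to 0$ and then $\delta\to 0$ yields $\liminf_{\varepsilon}-\varepsilon\log\mathbb{E}[\exp(-\varepsilon^{-1}F(X^{\varepsilon}))]\ge F(\xi)+I(\xi)$ for each such $\xi$, and finally taking the infimum over $\xi$ gives the lower bound.

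The rate function property of $I$ I would handle separately but by the same mechanism: given a level $\alpha$, take $\{\xi_{n}\}$ with $I(\xi_{n})\le\alpha$ and near-optimal triples $(u_{n},\varphi_{n},\pi_{n})$. Uniform cost bounds together with Remark \ref{assmp_lingrowth} and Lemma \ref{invmeas_lip2} give equicontinuity of $\{\xi_{n}\}$ and tightness of the controls in the relaxed topology used above; passing to a limit along a subsequence and invoking lower semicontinuity of the cost yields a limit triple in $\mathcal{A}(\xi^{*})$ with $I(\xi^{*})\le\alpha$, establishing compactness of $\{I\le\alpha\}$. The principal technical difficulty throughout is the lower bound construction of Section \ref{lowbd}, where the perturbation-plus-uniqueness argument of Proposition \ref{prop:prop4.1} is the device that lets one dispense with any nondegeneracy assumption on $a$.
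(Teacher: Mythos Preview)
Your outline tracks the paper's approach closely: variational representation, upper bound via near-optimal controls plus tightness and weak convergence of an occupation measure (Proposition \ref{ctrlmeas_tight}), lower bound via the perturbation-and-uniqueness device of Proposition \ref{prop:prop4.1}, and compactness of sublevel sets by the same tightness mechanism (the paper does this via the equivalent representation $\hat I$ in Proposition \ref{prop611}, rather than invoking Lemma \ref{invmeas_lip2}).

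There is, however, a genuine slip: you have the two inequalities swapped. The argument you describe under ``upper bound'' --- take near-optimal controls in the variational representation, pass to a weak limit, use lower semicontinuity of the cost --- produces a \emph{lower} bound on the variational quantity, i.e.\
\[
\liminf_{\varepsilon\to 0}\,\bigl(-\varepsilon\log\mathbb{E}[\exp(-\varepsilon^{-1}F(X^{\varepsilon}))]\bigr)\ \ge\ \inf_{\xi}\{F(\xi)+I(\xi)\},
\]
which is the Laplace upper bound \eqref{maintoshowupp}. Conversely, the argument you describe under ``lower bound'' --- choose $\xi$, construct specific prelimit controls driving $\bar{X}^{\varepsilon}\to\xi$ with cost approaching $I(\xi)$ --- plugs a particular control into an infimum and therefore yields an \emph{upper} bound,
\[
\limsup_{\varepsilon\to 0}\,\bigl(-\varepsilon\log\mathbb{E}[\exp(-\varepsilon^{-1}F(X^{\varepsilon}))]\bigr)\ \le\ F(\xi)+I(\xi),
\]
and then one infimizes over $\xi$ to obtain \eqref{maintoshowlow}. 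The descriptions of the two arguments are correct, but each is paired with the wrong inequality; compare Theorems \ref{thm:mainuppbd} and \ref{thm:mainlowbd}.
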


The proof of the Laplace upper bound
\begin{equation}
\limsup_{\varepsilon\rightarrow0}\varepsilon\log\mathbb{E}\left[  \exp\left(
-\varepsilon^{-1}F(X^{\varepsilon})\right)  \right]  \leq-\inf_{\xi\in
C([0,T]:\mathbb{R}^{d})}\{F(\xi)+I(\xi)\}, \label{maintoshowupp}%
\end{equation}
which corresponds to a variational lower bound, is given in Section
\ref{uppbd}. The corresponding lower bound
\begin{equation}
\liminf_{\varepsilon\rightarrow0}\varepsilon\log\mathbb{E}\left[  \exp\left(
-\varepsilon^{-1}F(X^{\varepsilon})\right)  \right]  \geq-\inf_{\xi\in
C([0,T]:\mathbb{R}^{d})}\{F(\xi)+I(\xi)\}, \label{maintoshowlow}%
\end{equation}
which is a variational upper bound, is proven in Section \ref{lowbd}. The fact
that $I$ is a rate function is shown in Section \ref{sec:cptlevset}
(Proposition \ref{prop611}).

\begin{remark}
$\,$

\begin{enumerate}
[(a)]

\item \textrm{Note that the rate function $I$ depends on the initial value of
$(X^{\varepsilon},Y^{\varepsilon})$, namely $(x_{0},y_{0})$. To emphasize this
dependence denote $I$ as $I_{x_{0},y_{0}}$. Using a straightforward argument
by contradiction, one can show that the Laplace limit (\ref{maintoshowlappri}%
), with $I$ replaced by $I_{x_{0},y_{0}}$ on the right side, hold uniformly
for }$y_{0}\in\mathbb{L}$\textrm{ and for }$x_{0}$\textrm{ in any compact
subset of }$R^{d}$\textrm{.}

\item For $\varepsilon>0$, define the $C([0,T]:\mathcal{M}_{F}(\mathbb{L}))$
valued random variable $\varrho^{\varepsilon}$ by
\[
\varrho^{\varepsilon}(t,A)\doteq\int_{0}^{t}1_{A}(Y^{\varepsilon}%
(s))ds,\;t\in\lbrack0,T],\;A\subset\mathbb{L}.
\]
Then Theorem \ref{thm:avg} can be strengthened as follows: The pair
$(X^{\varepsilon},\varrho^{\varepsilon})$ satisfies a large deviation
principle on $C([0,T]:\mathbb{R}^{d}\times\mathcal{M}_{F}(\mathbb{L}))$ with
rate function $\bar{I}$ where for $(\xi,\vartheta)\in C([0,T]:\mathbb{R}%
^{d}\times\mathcal{M}_{F}(\mathbb{L}))$, $\bar{I}(\xi,\vartheta)$ is defined
by the right side of \eqref{eq:rtfnnew} by replacing $\mathcal{A}(\xi)$ with
$\bar{\mathcal{A}}(\xi,\vartheta)$ which is the collection of all
$(u,\varphi,\pi)$ that satisfy in addition to \eqref{eq:stateq} and
\eqref{eq:eqinvar} the equality
\[
\vartheta(t,A)=\sum_{j\in A}\int_{0}^{t}\pi_{j}(s)ds,\;t\in\lbrack
0,T],\;A\subset\mathbb{L}.
\]

\end{enumerate}
\end{remark}

\subsection{An Equivalent Representation for the Rate Function}

\label{sec:equivrepnrf} In this section we present a different representation
for the rate function that will be more convenient to work with in some
instances. Recall that $\lambda_{\zeta}$ denotes Lebesgue measure on
$[0,\zeta]$.

Let $\hat{\ell}:\mathcal{M}_{F}[0,\zeta]\rightarrow\lbrack0,\infty]$ be
defined by
\[
\hat{\ell}(\eta)\doteq%
\begin{cases}
\int_{\lbrack0,\zeta]}\ell\left(  \frac{d\eta}{d\lambda_{\zeta}}(z)\right)
\lambda_{\zeta}(dz), & \quad\mbox{ if }\eta\ll\lambda_{\zeta},\\
\infty, & \quad\mbox{otherwise}.
\end{cases}
\]
For $\eta=(\eta_{i})_{i\in\mathbb{L}}$, where each $\eta_{i}\in\mathcal{M}%
_{F}[0,\zeta]$, with an abuse of notation we define
\[
\hat{\ell}(\eta)=\sum_{i\in\mathbb{L}}\hat{\ell}(\eta_{i}).
\]

Let $\mathcal{P}_{1}(\mathbb{H}_{T})$ denote the space of finite measures $Q$
on
\begin{equation}
\mathbb{H}_{T}\doteq\lbrack0,T]\times\mathbb{L}\times(\mathcal{M}_{F}%
[0,\zeta])^{|\mathbb{L}|}\times\mathbb{R}^{m}\label{eqn:defofH}%
\end{equation}
such that
\[
Q([a,b]\times\mathbb{L}\times(\mathcal{M}_{F}[0,\zeta])^{|\mathbb{L}|}%
\times\mathbb{R}^{m})=b-a,\;\mbox{ for all }0\leq a\leq b\leq T.
\]
In other words, denoting the marginal on the $i^{th}$ coordinate of
$\mathbb{H}_{T}$ by $[Q]_{i}$, $Q\in\mathcal{M}_{F}(\mathbb{H}_{T})$ is in
$\mathcal{P}_{1}(\mathbb{H}_{T})$ if and only if $[Q]_{1}=\lambda_{T}$, where
$\lambda_{T}$ is Lebesgue measure on $[0,T]$. For notational simplicity, we
will denote a typical $(s,y,\eta,z)\in\mathbb{H}_{T}$ as $\mathbf{v}$. $Q$
encodes time ($s$), the state of the controlled fast process ($y$), the
measures controlling the jump rates ($\eta$), and the control ($z$) applied to
perturb the mean of the Brownian motion. Recall that $b(x,y),a(x,y)$ are the
same as $b_{y}(x),a_{y}(x)$. For $\xi\in C([0,T]:\mathbb{R}^{d})$, let
$\hat{\mathcal{A}}(\xi)$ be the family of all $Q\in\mathcal{P}_{1}%
(\mathbb{H}_{T})$ such that
\begin{equation}
\int_{\mathbb{H}_{T}}\Vert z\Vert^{2}Q(d\mathbf{v})<\infty,\label{ab2013}%
\end{equation}%
\begin{equation}
\xi(t)=x+\int_{\mathbb{H}_{t}}b(\xi(s),y)Q(d\mathbf{v})+\int_{\mathbb{H}_{t}%
}a(\xi(s),y)zQ(d\mathbf{v}),\label{xi}%
\end{equation}
and
\begin{equation}
\int_{\mathbb{H}_{t}}A_{y,j}^{\eta}(\xi(s))Q(d\mathbf{v})=0\quad\mbox{for
all }j\in\mathbb{L}\mbox{ and a.e. }t\in\lbrack0,T],\label{inv}%
\end{equation}
where $\mathbb{H}_{t}=[0,t]\times\mathbb{L}\times(\mathcal{M}_{F}%
[0,\zeta])^{|\mathbb{L}|}\times\mathbb{R}^{m}$. Equation (\ref{xi}) gives the
controlled dynamics, and (\ref{inv}) guarantees that the conditional
distribution of $Q$, in the $y$-variable (i.e. the second coordinate) given
the time instant $s\in\lbrack0,T]$, the state $\xi(s)$ of the dynamics, and
that the rate control measure $\eta$ are used, is the stationary distribution
associated with the generator $A^{\eta}(\xi(s))$.
Define the function $\hat{I}:C([0,T]:\mathbb{R}^{d})\rightarrow\lbrack
0,\infty)$ by
\begin{equation}
\hat{I}(\xi)=\inf_{Q\in\hat{\mathcal{A}}(\xi)}\left\{  \int_{\mathbb{H}_{T}%
}\left[  \frac{1}{2}\Vert z\Vert^{2}+\hat{\ell}(\eta)\right]  Q(d\mathbf{v}%
)\right\}  .\label{ratefn}%
\end{equation}
In the expression for $\hat{I}$, all statistical relations between the
controls $(z,\eta)$ and the empirical measure for the fast variables are
determined by the joint distribution. It is a natural object for purposes of
weak convergence analysis, and these relations can be determined by the use of
suitable test functions. The following result shows that $\hat{I}$ and {$I$}
are the same.

\begin{proposition}
\label{prop:ieqihat} For every $\xi\in C([0,T]:\mathbb{R}^{d})$ , $\hat{I}%
(\xi)=I(\xi)$.
\end{proposition}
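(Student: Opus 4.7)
The plan is to establish the two inequalities $\hat I(\xi)\le I(\xi)$ and $I(\xi)\le\hat I(\xi)$ separately by explicit constructions that translate between the two parametrizations of the control. The forward direction will be a direct substitution: I would concentrate $Q$ on the graph of the map $(s,i)\mapsto(\eta^{(i,s)},u_i(s))$. The reverse direction will instead disintegrate $Q$ conditional on $(s,y)$, average, and invoke Jensen's inequality to bound the cost.

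\textbf{Constructing $Q$ from $(u,\varphi,\pi)$.} For the bound $\hat I(\xi)\le I(\xi)$, given $(u,\varphi,\pi)\in\mathcal{A}(\xi)$ of finite cost and using the convention $\varphi_{i,j}\equiv 1$ for $(i,j)\notin\mathbb{T}$, I would set $\eta^{(i,s)}=(\eta^{(i,s)}_j)_{j\in\mathbb{L}}$ with $\eta^{(i,s)}_j(dz)\doteq\varphi_{i,j}(s,z)\lambda_\zeta(dz)$ and define
\[
Q(ds\,dy\,d\eta\,dz)\doteq\sum_{i\in\mathbb{L}}\pi_i(s)\,\delta_{i}(dy)\,\delta_{\eta^{(i,s)}}(d\eta)\,\delta_{u_i(s)}(dz)\,ds.
\]
Then $[Q]_1=\lambda_T$, so $Q\in\mathcal{P}_1(\mathbb{H}_T)$, and direct substitution into \eqref{xi} and \eqref{inv} recovers \eqref{eq:stateq} and \eqref{eq:eqinvar}, because for $i\neq j$, $A^{\eta^{(i,s)}}_{ij}(\xi(s))=\rho^{\varphi_{i,\cdot}(s,\cdot)}_{ij}(\xi(s))=A^{\varphi_{i,\cdot}(s,\cdot)}_{ij}(\xi(s))$, with the diagonal entries matching by summation. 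The two costs agree term by term since $\hat\ell(\eta^{(i,s)})=\sum_{j\in\mathbb{L}}\int_{[0,\zeta]}\ell(\varphi_{i,j}(s,z))\lambda_\zeta(dz)$ and $\ell(1)=0$ on the $(i,j)\notin\mathbb{T}$ terms.

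\textbf{Extracting $(u,\varphi,\pi)$ from $Q$.} For the reverse inequality $I(\xi)\le\hat I(\xi)$, given $Q\in\hat{\mathcal{A}}(\xi)$ of finite cost, I would use $[Q]_1=\lambda_T$ and finiteness of $\mathbb{L}$ to disintegrate $Q(d\mathbf{v})=\pi_y(s)\,Q_{s,y}(d\eta\,dz)\,ds$, where $\pi_\cdot(s)\in\mathcal{P}(\mathbb{L})$ for a.e.\ $s$ and $Q_{s,i}$ is the regular conditional distribution of $(\eta,z)$ given $(s,i)$. Finiteness of $\int_{\mathbb{H}_T}\hat\ell(\eta)\,Q(d\mathbf{v})$ forces $\eta_j\ll\lambda_\zeta$ for $Q$-a.e.\ $\mathbf{v}$ and every $j$, and one selects a jointly measurable Radon--Nikodym density $\psi_j(z;\eta)\doteq d\eta_j/d\lambda_\zeta(z)$. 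The controls would be
\[
u_i(s)\doteq\int z\,Q_{s,i}(d\eta\,dz),\qquad\varphi_{i,j}(s,z)\doteq\int\psi_j(z;\eta)\,Q_{s,i}(d\eta\,dz').
\]
Integrating $(y,\eta,z)$ in \eqref{xi} and \eqref{inv} yields \eqref{eq:stateq} and \eqref{eq:eqinvar}, via the identity $\int\eta_j(E_{ij}(\xi(s)))\,Q_{s,i}(d\eta\,dz)=\int_{E_{ij}(\xi(s))}\varphi_{i,j}(s,z)\lambda_\zeta(dz)$. Jensen's inequality applied to the convex maps $z\mapsto\tfrac12\|z\|^2$ and $\ell$ then gives
\[
\tfrac12\|u_i(s)\|^2\le\int\tfrac12\|z\|^2\,Q_{s,i}(d\eta\,dz),\qquad\ell(\varphi_{i,j}(s,z))\le\int\ell(\psi_j(z;\eta))\,Q_{s,i}(d\eta\,dz');
\]
multiplying by $\pi_i(s)$, integrating in $(s,z)$, and summing over $i,j$ shows that the $(u,\varphi,\pi)$-cost is at most the $Q$-cost.

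\textbf{Main obstacle.} The one delicate step will be purely measure-theoretic: establishing the regular conditional distribution $Q_{s,y}$ and a jointly measurable version of the Radon--Nikodym derivative $(z,\eta)\mapsto\psi_j(z;\eta)$ on $\{\eta:\eta_j\ll\lambda_\zeta\}$, which are needed for Fubini and Jensen to apply as above. These are standard facts on the Polish space $(\mathcal{M}_F[0,\zeta])^{|\mathbb{L}|}\times\mathbb{R}^m$, but they are the one place requiring care. Everything else is bookkeeping, together with the convexity of $\tfrac12\|\cdot\|^2$ and $\ell$.
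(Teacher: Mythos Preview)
Your proposal is correct and follows essentially the same two-inequality strategy as the paper: concentrate $Q$ on the graph of $(s,i)\mapsto(\eta^{(i,s)},u_i(s))$ for $\hat I\le I$, and disintegrate $Q$ plus Jensen for $I\le\hat I$.

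The one methodological difference worth noting is in the reverse direction. You take Radon--Nikodym derivatives $\psi_j(z;\eta)=d\eta_j/d\lambda_\zeta(z)$ first and then average them under $Q_{s,i}$, applying Jensen to $\ell$ pointwise in $z$. The paper instead averages the measures first, setting $\bar\eta_y(s)\doteq\int\eta\,[Q]_{34|12}(d\eta\,dz\mid y,s)$, and then applies Jensen to the convex lower-semicontinuous functional $\hat\ell$ on $(\mathcal M_F[0,\zeta])^{|\mathbb L|}$, only afterwards extracting $\varphi_{yy'}(s,\cdot)=d\bar\eta_{yy'}(s,\cdot)/d\lambda_\zeta$. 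The two routes produce the same $\varphi$ (by Fubini), but the paper's ordering sidesteps precisely the obstacle you flag: no jointly measurable selection $(z,\eta)\mapsto\psi_j(z;\eta)$ is needed, since the barycenter of a probability measure on $\mathcal M_F[0,\zeta]$ is elementary to define and $\hat\ell$ is convex as a functional on measures. Your version is correct but carries an avoidable measurable-selection burden; the paper's is slightly cleaner for exactly the reason you anticipated. A second minor point: in the forward direction the paper explicitly handles times $s$ and states $i$ where $z\mapsto\varphi_{ij}(s,z)$ fails to be integrable (by redefining $\bar\eta_{ij}(s,\cdot)=\lambda_\zeta$ there), which you implicitly rely on $\pi_i(s)=0$ to make harmless---true, but worth stating.
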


\begin{proof}
Fix $\xi\in C([0,T]:\mathbb{R}^{d})$. We first show that $\hat{I}(\xi)\leq
I(\xi)$. Without loss of generality we assume that {$I$}$(\xi)<\infty$. Fix
$\varepsilon>0$ and let $(u,\varphi,\pi)\in\mathcal{A}(\xi)$ be such that
\begin{equation}
\sum_{i\in\mathbb{L}}\frac{1}{2}\int_{0}^{T}\Vert u_{i}(s)\Vert^{2}\pi
_{i}(s)ds+\sum_{(i,j)\in\mathbb{T}}\int_{[0,\zeta]\times\lbrack0,T]}%
\ell(\varphi_{ij}(s,z))\pi_{i}(s)\lambda_{\zeta}(dz)ds\leq{I}(\xi
)+\varepsilon.\label{eq:eq607-17}%
\end{equation}
Define $\bar{\eta}_{ij}(s,dz)\in\mathcal{M}_{F}[0,\zeta]$ for $i,j\in
\mathbb{L}$ and $s\in\lbrack0,T]$ by%
\[
\bar{\eta}_{ij}(s,dz)\doteq%
\begin{cases}
\varphi_{ij}(s,z)\lambda_{\zeta}(dz), & \quad\mbox{ if }(i,j)\in
\mathbb{T}\mbox{ and }z\mapsto\varphi_{ij}(s,z)\mbox{ is integrable},\\
\lambda_{\zeta}(dz), & \quad\mbox{ otherwise. }
\end{cases}
\]
Let $\bar{\eta}_{i}(s)\doteq(\bar{\eta}_{ij}(s,\cdot))_{j\in\mathbb{L}}$.
Define $Q\in\mathcal{P}_{1}(\mathbb{H}_{T})$ by
\[
Q([a,b]\times\{i\}\times A\times B)\doteq\int_{a}^{b}\pi_{i}(s)\delta
_{\bar{\eta}_{i}(s)}(A)\delta_{u_{i}(s)}(B)\ ds,
\]
for $A\in\mathcal{B}((\mathcal{M}_{F}[0,\zeta])^{|\mathbb{L}|})$,
$B\in\mathcal{B}(\mathbb{R}^{m})$, $0\leq a\leq b\leq T$, and $i\in\mathbb{L}%
$. Then it is easy to verify that $Q\in\hat{\mathcal{A}}(\xi)$ and
\[
\int_{\mathbb{H}_{T}}\left[  \frac{1}{2}\Vert z\Vert^{2}+\hat{\ell}%
(\eta)\right]  Q(d\mathbf{v})
\]
equals the left side of \eqref{eq:eq607-17}. This proves that $\hat{I}%
(\xi)\leq I(\xi)+\varepsilon$. Since $\varepsilon>0$ is arbitrary, we have
$\hat{I}(\xi)\leq I(\xi)$. We now consider the reverse inequality, namely
{$I$}$(\xi)\leq\hat{I}(\xi)$. We assume without loss of generality that
$\hat{I}(\xi)<\infty$. Let $Q\in\hat{\mathcal{A}}(\xi)$ be such that
\[
\int_{\mathbb{H}_{T}}\left[  \frac{1}{2}\Vert z\Vert^{2}+\hat{\ell}%
(\eta)\right]  Q(d\mathbf{v})\leq\hat{I}(\xi)+\varepsilon.
\]
Let $[Q]_{34|12}(d\eta\times dz|y,s)$ denote the conditional distribution on
the third and fourth coordinates given the first and second. Disintegrate the
measure $Q$ as
\[
Q(ds\times\{y\}\times d\eta\times dz)=ds\,\pi_{y}(s)\,[Q]_{34|12}(d\eta\times
dz|y,s).
\]
Define
\[
u_{y}(s)\doteq\int_{(\mathcal{M}_{F}[0,\zeta])^{|\mathbb{L}|}\times
\mathbb{R}^{m}}z[Q]_{34|12}(d\eta\times dz|y,s),\;y\in\mathbb{L},s\in
\lbrack0,T].
\]
Also, for $(y,s)\in\mathbb{L}\times\lbrack0,T]$, let
\begin{equation}
\bar{\eta}_{y}(s)\doteq\int_{(\mathcal{M}_{F}[0,\zeta])^{|\mathbb{L}|}%
\times\mathbb{R}^{m}}\eta\lbrack Q]_{34|12}(d\eta\times
dz|y,s)\label{eqn:defofetabar}%
\end{equation}
and write $\bar{\eta}_{y}=(\bar{\eta}_{yy^{\prime}})_{y^{\prime}\in\mathbb{L}%
}$. By convexity
\[
\hat{\ell}(\bar{\eta}_{y}(s))\leq\int_{(\mathcal{M}_{F}[0,\zeta])^{|\mathbb{L}%
|}\times\mathbb{R}^{m}}\hat{\ell}(\eta)[Q]_{34|12}(d\eta\times dz|y,s)
\]
and therefore
\[
\sum_{y\in\mathbb{L}}\int_{[0,T]}\pi_{y}(s)\hat{\ell}(\bar{\eta}_{y}%
(s))ds\leq\int_{\mathbb{H}_{T}}\hat{\ell}(\eta)Q(d\mathbf{v}).
\]
Define
\begin{equation}
\varphi_{yy^{\prime}}(s,z)\doteq%
\begin{cases}
\frac{d\bar{\eta}_{yy^{\prime}}(s,\cdot)}{d\lambda_{\zeta}(\cdot)}(z), &
\quad\mbox{ if }\bar{\eta}_{yy^{\prime}}(s,\cdot)\ll\lambda_{\zeta}(\cdot),\\
1, & \quad\mbox{ otherwise}.
\end{cases}
\label{eqn:varphi}%
\end{equation}
Then note that
\[
\sum_{(i,j)\in\mathbb{T}}\int_{[0,\zeta]\times\lbrack0,T]}\ell(\varphi
_{ij}(s,z))\pi_{i}(s)\lambda_{\zeta}(dz)ds\leq\int_{\mathbb{H}_{T}}\hat{\ell
}(\eta)Q(d\mathbf{v}).
\]
A similar convexity argument shows that
\[
\sum_{i\in\mathbb{L}}\frac{1}{2}\int_{0}^{T}\Vert u_{i}(s)\Vert^{2}\pi
_{i}(s)ds\leq\frac{1}{2}\int_{\mathbb{H}_{T}}\Vert z\Vert^{2}Q(d\mathbf{v}).
\]
To complete the proof it suffices to show that
\[
(u=(u_{i}),\varphi=(\varphi_{ij}),\pi=(\pi_{i}))\in\mathcal{A}(\xi).
\]
First, it is easily checked that $\xi$ satisfies \eqref{eq:stateq}. Thus it
remains to verify \eqref{eq:eqinvar}. Since $Q\in\hat{\mathcal{A}}(\xi)$, from
\eqref{inv} we have that for all $j\in\mathbb{L}$ and a.e. $s\in\lbrack0,T]$
\[
\sum_{y\in\mathbb{L}}\pi_{y}(s)\int_{(\mathcal{M}_{F}[0,\zeta])^{|\mathbb{L}%
|}\times\mathbb{R}^{m}}A_{y,j}^{\eta}(\xi(s))[Q]_{34|12}(d\eta\times
dz|y,s)=0.
\]
This equality can be rewritten as
\[
\sum_{y:y\neq j}\pi_{y}(s)\int A_{y,j}^{\eta}(\xi(s))[Q]_{34|12}(d\eta\times
dz|y,s)=\pi_{j}(s)\sum_{i:i\neq j}\int A_{j,i}^{\eta}(\xi(s))[Q]_{34|12}%
(d\eta\times dz|j,s).
\]
Using the definition of $A^{\eta}$ in (\ref{eq:eq855}), the last display
becomes
\[
\sum_{y:y\neq j}\pi_{y}(s)\int\eta_{j}(E_{yj}(\xi(s)))[Q]_{34|12}(d\eta\times
dz|y,s)=\pi_{j}(s)\sum_{i:i\neq j}\int\eta_{i}(E_{ji}(\xi(s)))[Q]_{34|12}%
(d\eta\times dz|j,s),
\]
which owing to the definition of $\bar{\eta}_{y}$ in (\ref{eqn:defofetabar})
is the same as
\[
\sum_{y:y\neq j}\pi_{y}(s)\bar{\eta}_{yj}(s,E_{yj}(\xi(s)))=\pi_{j}%
(s)\sum_{i:i\neq j}\bar{\eta}_{ji}(s,E_{ji}(\xi(s))).
\]
From the definition of $(\varphi_{ij})$ in (\ref{eqn:varphi}) it is now
immediate that this is same as \eqref{eq:eqinvar}.
\end{proof}

\subsection{Compact Level Sets}

\label{sec:cptlevset} We first prove the following lemmas, which will be used
in the proof of the main result of this section.

\begin{lemma}
\label{Gen_ineq} For $\eta\in(\mathcal{M}_{F}[0,\zeta])^{|\mathbb{L}|}$ and
$x\in\mathbb{R}^{d}$, let $A^{\eta}(x)$ be as defined in \eqref{eq:eq855mzr}.
Then there is a $c_{1}\in(0,\infty)$ such that for any $M\in\lbrack1,\infty)$,
$x,x^{\prime}\in\mathbb{R}^{d}$, $\eta\in(\mathcal{M}_{F}[0,\zeta
])^{|\mathbb{L}|}$,
\[
\sup_{i,j\in\mathbb{L},i\neq j}\left\vert A_{ij}^{\eta}(x)-A_{ij}^{\eta
}(x^{\prime})\right\vert \leq c_{1}\left(  e^{M}\Vert x-x^{\prime}\Vert
+\frac{\hat{\ell}(\eta)}{M}\right)  .
\]

\end{lemma}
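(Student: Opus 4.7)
The proof plan rests on reducing the difference $A_{ij}^{\eta}(x)-A_{ij}^{\eta}(x')$ (for $i\neq j$) to the $\eta_j$-measure of the symmetric difference $E_{ij}(x)\triangle E_{ij}(x')$, which by \eqref{liptheta} has small Lebesgue measure, and then trading the absence of any a priori sup-norm bound on the density of $\eta_j$ against the entropy cost $\hat{\ell}(\eta)$ via a standard Young-type inequality for $\ell(\cdot)$.

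Concretely, by \eqref{eq:eq855mzr}, for $i\neq j$,
\[
|A_{ij}^{\eta}(x)-A_{ij}^{\eta}(x')|=|\eta_j(E_{ij}(x))-\eta_j(E_{ij}(x'))|\leq \eta_j\bigl(E_{ij}(x)\triangle E_{ij}(x')\bigr).
\]
If $\eta_j$ is not absolutely continuous with respect to $\lambda_\zeta$ then $\hat{\ell}(\eta)=\infty$ and the stated inequality is trivial, so I assume $\eta_j\ll\lambda_\zeta$ with density $\psi_j\doteq d\eta_j/d\lambda_\zeta$. The key pointwise inequality is the Legendre-duality bound
\[
\tau\leq \frac{1}{M}\ell(\tau)+\frac{e^{M}-1}{M},\qquad \tau\geq 0,\;M\geq 1,
\]
which follows from $\ell^{*}(M)=e^{M}-1$ (so that $M\tau\leq \ell(\tau)+e^{M}-1$).

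Applying this pointwise to $\psi_j$ and integrating over $A\doteq E_{ij}(x)\triangle E_{ij}(x')$ gives
\[
\eta_j(A)=\int_{A}\psi_j(z)\,\lambda_\zeta(dz)\leq \frac{1}{M}\int_{[0,\zeta]}\ell(\psi_j(z))\,\lambda_\zeta(dz)+\frac{e^{M}-1}{M}\lambda_\zeta(A)=\frac{1}{M}\hat{\ell}(\eta_j)+\frac{e^{M}-1}{M}\lambda_\zeta(A).
\]
By \eqref{liptheta}, $\lambda_\zeta(A)\leq \kappa_2\|x-x'\|$, and since $M\geq 1$ we have $(e^{M}-1)/M\leq e^{M}$. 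Combining and using $\hat{\ell}(\eta_j)\leq \hat{\ell}(\eta)$,
\[
|A_{ij}^{\eta}(x)-A_{ij}^{\eta}(x')|\leq \kappa_2 e^{M}\|x-x'\|+\frac{\hat{\ell}(\eta)}{M},
\]
so that $c_1\doteq \max(\kappa_2,1)$ suffices, uniformly in $i\neq j$.

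The substance of the argument is the Young-type inequality and the Lipschitz estimate \eqref{liptheta}; there is no real obstacle, the only mild subtlety being to treat the singular part of $\eta_j$ separately (where the bound is vacuous) and to absorb the factor $(e^{M}-1)/M$ into $e^{M}$ using $M\geq 1$. The bound is tight enough for its intended use, namely to obtain a Lipschitz-in-$x$ control of the generator that degrades only logarithmically in the entropy of $\eta$, by choosing $M\sim \log(1/\|x-x'\|)$.
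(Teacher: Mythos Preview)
Your proof is correct and follows essentially the same route as the paper: reduce to $\eta_j(E_{ij}(x)\triangle E_{ij}(x'))$, then apply the Young-type inequality between $\ell$ and its conjugate (the paper writes it as $uv\leq e^{\sigma u}+\sigma^{-1}\ell(v)$ with $u=1$, $\sigma=M$, which is your Legendre bound up to the harmless difference $e^M$ versus $(e^M-1)/M$), and finish with \eqref{liptheta}.
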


\begin{proof}
Let $\eta=(\eta_{i})_{i=1}^{|\mathbb{L}|}$. Note that the result is automatic
if $\eta_{i}\not \ll \lambda_{\zeta}$ for some $i\in\mathbb{L}$. Assume now
that $\eta_{i}\ll\lambda_{\zeta}$ for each $i$. The following inequality will
be used in the proof: for $u,v\in(0,\infty)$ and $\sigma\in\lbrack1,\infty)$,
\begin{equation}
uv\leq e^{\sigma u}+\frac{1}{\sigma}(v\log v-v+1)=e^{\sigma u}+\frac{1}%
{\sigma}\ell(v). \label{eq:eq1824}%
\end{equation}
Fix $x,x^{\prime}\in\mathbb{R}^{d}$ and $i\neq j$ in $\mathbb{L}$. Denoting
the set $E_{ij}(x)\Delta E_{ij}(x^{\prime})$ by $\bar{E}_{ij}$, for
$M\in\lbrack1,\infty)$
\begin{align*}
\left\vert A_{ij}^{\eta}(x)-A_{ij}^{\eta}(x^{\prime})\right\vert  &
=|\eta_{j}(E_{ij}(x))-\eta_{j}(E_{ij}(x^{\prime}))|\leq\eta_{j}(\bar{E}%
_{ij})\\
&  =\int_{\bar{E}_{ij}}1\cdot\frac{d\eta_{j}}{d\lambda_{\zeta}}(r)\lambda
_{\zeta}(dr)\leq\int_{\bar{E}}e^{M}\lambda_{\zeta}(dr)+\frac{1}{M}\int
_{\bar{E}_{ij}}\ell\left(  \frac{d\eta_{j}}{d\lambda_{\zeta}}(r)\right)
\lambda_{\zeta}(dr)\\
&  \leq e^{M}\kappa_{2}\Vert x-x^{\prime}\Vert+\frac{1}{M}\hat{\ell}(\eta
_{j}),
\end{align*}
where the inequality on the second line follows from \eqref{eq:eq1824} and the
last inequality follows from \eqref{liptheta}. The result follows.
\end{proof}

\begin{remark}
\label{rem:rem1017} Using the inequality in \eqref{eq:eq1824} one can
similarly show that there exists a $C_{1}\in(0,\infty)$ such that for any
$M\in\lbrack1,\infty)$ and $\eta\in(\mathcal{M}_{F}[0,\zeta])^{|\mathbb{L}|}%
$,
\[
\sup_{x\in\mathbb{R}^{d}}\max_{i,j\in\mathbb{L},i\neq j}A_{ij}^{\eta}(x)\leq
C_{1}\left(  e^{M}+\frac{\hat{\ell}(\eta)}{M}\right)  .
\]

\end{remark}

The following lemma will be used at several places in weak convergence arguments.

\begin{lemma}
\label{lem:genQ} Let $(\eta^{n},Z^{n},Y^{n})$ be a sequence of $(\mathcal{M}%
_{F}[0,\zeta])^{|\mathbb{L}|}\times\mathbb{R}^{d}\times\mathbb{L}$ valued
random variables given on a probability space $(\bar{\Omega},\bar{\mathcal{F}%
},\bar{\mathbb{P}})$, which converges in probability to $(\bar{\eta},\bar
{Z},\bar{Y})$. Further suppose that, for some $\bar{C}\in(0,\infty)$,
$\sup_{n\in\mathbb{N}}\bar{\mathbb{E}}\hat{\ell}(\eta^{n})\leq\bar{C}$. Then
for all $j\in\mathbb{L}$, $A_{Y^{n},j}^{\eta^{n}}(Z^{n})$ converges in $L^{1}$
to $A_{\bar{Y},j}^{\bar{\eta}}(\bar{Z})$, as $n\rightarrow\infty$.
\end{lemma}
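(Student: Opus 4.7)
The plan is to establish $L^1$ convergence by combining two ingredients: uniform integrability of the sequence $\{A_{i,j}^{\eta^n}(Z^n)\}$ coming from the bound in Remark \ref{rem:rem1017}, together with convergence in probability, handled by inserting the intermediate quantity $A_{i,j}^{\eta^n}(\bar Z)$.

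First, I would reduce to the off-diagonal case with a fixed starting state. Since $\mathbb{L}$ is a finite (hence discrete) set and $Y^n \to \bar Y$ in probability, for each $\varepsilon > 0$ we have $\bar{\mathbb{P}}(Y^n \neq \bar Y) < \varepsilon$ for large $n$, and the uniform integrability of $|A_{\cdot,j}^{\eta^n}(Z^n)|$ (see next step) together with the identity
\[
A^{\eta^n}_{Y^n,j}(Z^n) = \sum_{i \in \mathbb{L}} \bigl(A^{\eta^n}_{i,j}(Z^n) - A^{\bar \eta}_{i,j}(\bar Z)\bigr) \mathbf{1}_{\{Y^n = i\}} + \sum_{i \in \mathbb{L}} A^{\bar \eta}_{i,j}(\bar Z)\mathbf{1}_{\{Y^n = i\}}
\]
reduces the problem to showing $A_{i,j}^{\eta^n}(Z^n) \to A_{i,j}^{\bar \eta}(\bar Z)$ in $L^1$ for every fixed $i \in \mathbb{L}$ (the case $i=j$ reduces to a finite sum of off-diagonal cases by definition of $A^\eta$).

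Next, I would prove uniform integrability using Remark \ref{rem:rem1017}. Given $K > 1$, set $M = \log(K/(2c_1))$; then $|A_{i,j}^{\eta^n}(Z^n)| > K$ forces $\hat \ell(\eta^n) > KM/(2c_1)$, so
\[
\bar{\mathbb{E}}\bigl[|A^{\eta^n}_{i,j}(Z^n)|\mathbf{1}_{\{|A^{\eta^n}_{i,j}(Z^n)| > K\}}\bigr] \le \tfrac{K}{2}\,\bar{\mathbb{P}}\bigl(\hat \ell(\eta^n) > KM/(2c_1)\bigr) + \tfrac{c_1}{M}\,\bar{\mathbb{E}}\bigl[\hat \ell(\eta^n)\bigr],
\]
and Markov's inequality together with the hypothesis $\sup_n \bar{\mathbb{E}}\hat \ell(\eta^n) \le \bar C$ shows the right-hand side tends to $0$ as $K \to \infty$, uniformly in $n$.

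For the convergence in probability, I would use the triangle decomposition
\[
|A^{\eta^n}_{i,j}(Z^n) - A^{\bar \eta}_{i,j}(\bar Z)| \le |A^{\eta^n}_{i,j}(Z^n) - A^{\eta^n}_{i,j}(\bar Z)| + |A^{\eta^n}_{i,j}(\bar Z) - A^{\bar \eta}_{i,j}(\bar Z)|.
\]
The first piece is controlled by Lemma \ref{Gen_ineq}: choose $M$ large so that $c_1 \bar C / M$ is small (in expectation the $\hat \ell(\eta^n)/M$ term is then uniformly small), and then $e^M \|Z^n - \bar Z\| \to 0$ in probability as $Z^n \to \bar Z$. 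The second piece is the delicate step: $\eta^n \to \bar \eta$ in probability in the weak topology on $(\mathcal{M}_F[0,\zeta])^{|\mathbb{L}|}$, and by lower semicontinuity of $\hat \ell$ together with Fatou, $\bar{\mathbb{E}}[\hat \ell(\bar \eta)] \le \liminf_n \bar{\mathbb{E}}[\hat \ell(\eta^n)] \le \bar C$, so $\bar \eta_j \ll \lambda_\zeta$ almost surely and hence $\bar \eta_j$ is a.s. atomless. Consequently $\partial E_{ij}(\bar Z)$ is $\bar \eta_j$-null a.s., so by the Portmanteau theorem (applied along an a.s.\ converging subsequence via Skorokhod representation, which is legitimate since weak convergence in probability is equivalent to weak convergence in distribution plus a tightness argument, and the standard subsequence trick recovers convergence in probability), $\eta^n_j(E_{ij}(\bar Z)) \to \bar \eta_j(E_{ij}(\bar Z))$ in probability.

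The main obstacle is the second piece of the triangle inequality: one must show that the evaluation $\eta \mapsto \eta_j(E_{ij}(\bar Z))$ is a.s.\ continuous at $\bar \eta$ despite $E_{ij}(\bar Z)$ being a random closed set. This is resolved precisely because the bound $\bar{\mathbb{E}}\hat \ell(\eta^n) \le \bar C$ propagates via Fatou to $\bar \eta$, forcing absolute continuity (and hence the absence of atoms at the endpoints of $E_{ij}(\bar Z)$). Once convergence in probability and uniform integrability are both in hand, standard Vitali convergence delivers the $L^1$ convergence stated in the lemma.
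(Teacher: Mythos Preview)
Your proposal is correct and follows essentially the same route as the paper: both arguments replace $Z^n$ by $\bar Z$ via Lemma~\ref{Gen_ineq}, use Fatou and lower semicontinuity of $\hat\ell$ to get $\bar\eta_j\ll\lambda_\zeta$ a.s.\ so that Portmanteau applies on the intervals $E_{ij}(\bar Z)$, and then upgrade convergence in probability to $L^1$ via uniform integrability deduced from the $\hat\ell$ bound. The only cosmetic differences are that the paper handles $Y^n$ by noting it eventually equals $\bar Y$ along an a.s.\ subsequence (rather than your decomposition over $i$), and it establishes uniform integrability by bounding $|A^{\eta^n}_{i,j}|\le \eta^n_j[0,\zeta]$ and invoking superlinearity of $\ell$ directly, whereas you route through Remark~\ref{rem:rem1017}; your parenthetical about Skorokhod representation is unnecessary since you already have convergence in probability and can simply pass to an a.s.\ convergent subsequence.
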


\begin{proof}
Fix $j\in\mathbb{L}$. Using Lemma \ref{Gen_ineq} we see that
\begin{equation}
|A_{Y^{n},j}^{\eta^{n}}(Z^{n})-A_{Y^{n},j}^{\eta^{n}}(\bar{Z})|\rightarrow
0,\quad\mbox{in probability.} \label{E1new}%
\end{equation}
From Fatou's lemma and the lower semicontinuity of $\hat{\ell}$ it follows
that $\bar{\mathbb{E}}\hat{\ell}(\bar{\eta})\leq\bar{C}$. Next, assume without
loss of generality by using a subsequential argument that the convergence of
$(\eta^{n},Z^{n},Y^{n})$ holds a.s. Let $\Omega_{0}\in\bar{\mathcal{F}}$ be
such that $\bar{\mathbb{P}}(\Omega_{0})=1$ and $\forall\omega\in\Omega_{0}$,
\[
(\eta^{n}(\omega),Z^{n}(\omega),Y^{n}(\omega))\rightarrow(\bar{\eta}%
(\omega),\bar{Z}(\omega),\bar{Y}(\omega)),
\]
and $\hat{\ell}(\bar{\eta}(\omega))<\infty$. Fix $\omega\in\Omega_{0}$. We
will suppress $\omega$ from the notation at some places below. Since
$\mathbb{L}$ is a finite set, there exists an $N\equiv N(\omega)$ such that
for $n\geq N$, $Y^{n}(\omega)=Y(\omega)$, and consequently, $A_{Y^{n},j}%
^{\eta^{n}}(\bar{Z})=A_{\bar{Y},j}^{\eta^{n}}(\bar{Z})$. Since $\eta
^{n}\rightarrow\bar{\eta}$ and $\bar{\eta}_{j}$ is absolutely continuous with
respect to $\lambda_{\zeta}$ for every $j$, we conclude that
\[
\eta_{j}^{n}(E_{ij}(x))\rightarrow\bar{\eta}_{j}(E_{ij}(x)),\quad\forall
i,j\in\mathbb{L},\,i\neq j,\,x\in\mathbb{R}^{d}.
\]
Using this in (\ref{eq:eq855mzr}) we now have that
\[
A_{Y^{n},j}^{\eta^{n}}(\bar{Z})\rightarrow A_{\bar{Y},j}^{\bar{\eta}}(\bar
{Z}),\quad\mbox{ a.s.}
\]
Combining this with (\ref{E1new}) we have that
\[
A_{Y^{n},j}^{\eta^{n}}(Z^{n})\rightarrow A_{\bar{Y},j}^{\bar{\eta}}(\bar
{Z})\quad\mbox{in probability.}
\]
Finally to show the $L^{1}$-convergence, it suffices to argue that $\eta
_{j}^{n}[0,\zeta]$, or equivalently $\eta_{\ast,j}^{n}[0,\zeta]\doteq\eta
^{n}[0,\zeta]/\lambda_{\zeta}[0,\zeta]$, is uniformly integrable. For this
note that
\[
\bar{\mathbb{E}}(\ell(\eta_{\ast,j}^{n}[0,\zeta]))=\bar{\mathbb{E}}\left(
\ell\left(  \frac{1}{\zeta}\int_{[0,\zeta]}\frac{d\eta_{j}^{n}}{d\lambda
_{\zeta}}(r)\lambda_{\zeta}(dr)\right)  \right)  \leq\frac{1}{\zeta}%
\bar{\mathbb{E}}(\hat{\ell}(\eta_{j}^{n}))\leq\frac{\bar{C}}{\zeta},
\]
where the first inequality follows from the convexity of $\ell$. The desired
uniform integrability is now an immediate consequence of the superlinearity of
$\ell$.
\end{proof}

We now show that the function $I$, which is same as the function $\hat{I}$
defined in \eqref{ratefn}, is a rate function on $C([0,T]:\mathbb{R}^{d})$.

\begin{proposition}
\label{prop611} For every $M\in(0,\infty)$, the set $\mathbb{U}_{M}\doteq
\{\xi\in C([0,T]:\mathbb{R}^{d})|I(\xi)\leq M\}$ is compact, and consequently
$I$ is a rate function on $C([0,T]:\mathbb{R}^{d})$.
\end{proposition}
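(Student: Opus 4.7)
By Proposition \ref{prop:ieqihat} it suffices to work with the representation $\hat{I}$ in \eqref{ratefn}. The plan is the standard two-step argument: show that $\mathbb{U}_M$ is pre-compact in $C([0,T]:\mathbb{R}^d)$ and then that $\hat I$ is lower semicontinuous, so $\mathbb{U}_M$ is closed.

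Let $\{\xi^n\}\subset\mathbb{U}_M$ and pick, for each $n$, a near-optimizer $Q^n\in\hat{\mathcal{A}}(\xi^n)$ with cost at most $M+1/n$. First I would establish pre-compactness of $\{\xi^n\}$ by Arzel\`a--Ascoli. Using \eqref{xi} and the linear growth of $b,a$ (Remark \ref{assmp_lingrowth}), together with $[Q^n]_1=\lambda_T$ and the Cauchy--Schwarz bound
\[
\int_{\mathbb{H}_{[s,t]}}\|z\|\,Q^n(d\mathbf{v})\le (t-s)^{1/2}\!\left(\int_{\mathbb{H}_T}\|z\|^2 Q^n(d\mathbf{v})\right)^{1/2}\le (t-s)^{1/2}(2M+2)^{1/2},
\]
a Gronwall argument first gives $\sup_n\|\xi^n\|_\infty<\infty$, and then equicontinuity follows by the same estimate on increments $\|\xi^n(t)-\xi^n(s)\|$.

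Next I would show tightness of $\{Q^n\}$ on $\mathbb{H}_T$. The time- and $\mathbb{L}$-marginals are trivially tight (fixed Lebesgue and supported on a finite set). The $z$-marginal is tight by the uniform bound $\int\|z\|^2 Q^n\le 2(M+1)$. For the measure-valued coordinate, from \eqref{eq:eq1824} with $\sigma=1$ one gets $\eta_j[0,\zeta]\le e\zeta+\hat{\ell}(\eta_j)$, so the uniform bound $\int\hat{\ell}(\eta)Q^n\le M+1$ and Markov's inequality produce, for each $R>0$, a set $\{\eta:\max_j\eta_j[0,\zeta]\le R\}$ of $Q^n$-mass arbitrarily close to $T$ uniformly in $n$; since $[0,\zeta]$ is compact, bounded subsets of $\mathcal{M}_F[0,\zeta]$ are weakly pre-compact, giving tightness of the third marginal. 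Thus, along a subsequence (which I re-index), $\xi^n\to\xi$ uniformly in $C([0,T]:\mathbb{R}^d)$ and $Q^n\Rightarrow Q$ in $\mathcal{P}_1(\mathbb{H}_T)$.

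The main step is to verify $Q\in\hat{\mathcal{A}}(\xi)$ with $\int[\tfrac12\|z\|^2+\hat{\ell}(\eta)]Q\le M$. The cost bound is immediate from lower semicontinuity of $\hat{\ell}$ and of $\|z\|^2$, together with the standard lsc of integrals of nonnegative lsc functions against weakly convergent measures, which also yields $\int\|z\|^2 Q<\infty$. For \eqref{xi}, I would invoke Skorohod's representation to assume a.s.\ convergence, write
\[
\xi^n(t)=x_0+\int_{\mathbb{H}_t}b(\xi^n(s),y)Q^n(d\mathbf{v})+\int_{\mathbb{H}_t}a(\xi^n(s),y)z\,Q^n(d\mathbf{v}),
\]
and pass to the limit: the drift term is straightforward from uniform convergence of $\xi^n$, Lipschitz/linear growth of $b$, and weak convergence of $Q^n$; the diffusion term is handled by splitting $z=z1_{\{\|z\|\le R\}}+z1_{\{\|z\|>R\}}$, using uniform integrability from the $L^2$ bound on $\|z\|$ for the tail and bounded continuous test functions for the truncated part. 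The hardest part is \eqref{inv}: here I would use Lemma \ref{lem:genQ} in the following way. For fixed $t\in[0,T]$ and $j\in\mathbb{L}$, consider the functional $Q\mapsto\int_{\mathbb{H}_t}A^\eta_{y,j}(\xi(s))Q(d\mathbf{v})$. Approximate it by $\int_{\mathbb{H}_t}A^\eta_{y,j}(\xi^n(s))Q^n(d\mathbf{v})$, which is $0$ by construction, and control the difference via Lemma \ref{Gen_ineq} applied with a large $M$ (to absorb the $\hat{\ell}(\eta)$ term) combined with uniform convergence $\xi^n\to\xi$. Combined with Lemma \ref{lem:genQ}, which handles the $L^1$-continuity of $A^\eta_{y,j}$ along sequences with bounded $\hat{\ell}$, this yields \eqref{inv} in the limit and hence $Q\in\hat{\mathcal{A}}(\xi)$, giving $\hat I(\xi)\le M$ and closedness of $\mathbb{U}_M$. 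The main obstacle throughout is the failure of continuity of $\eta\mapsto A^\eta(x)$ in the weak topology; Lemma \ref{Gen_ineq} and Lemma \ref{lem:genQ} are exactly what make this controllable under the cost bound.
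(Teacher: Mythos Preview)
Your proposal is correct and follows essentially the same route as the paper's own proof: pick near-optimal $Q^n\in\hat{\mathcal{A}}(\xi^n)$, establish joint pre-compactness of $(\xi^n,Q^n)$ via Gronwall/Arzel\`a--Ascoli for $\xi^n$ and the cost bounds (using \eqref{eq:eq1824}) for $Q^n$, then pass to the limit in \eqref{xi} by Lipschitz estimates plus uniform integrability of $z$, and in \eqref{inv} by combining Lemma~\ref{Gen_ineq} (to swap $\xi^n$ for $\xi$) with Lemma~\ref{lem:genQ} via Skorohod (to pass from $Q^n$ to $Q$). The only cosmetic difference is that the paper organizes the argument as ``limit point lies in $\mathbb{U}_M$'' rather than as ``pre-compact $+$ lsc,'' but the substance is identical.
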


\begin{proof}
Let $\{\xi_{n}\}_{n\in\mathbb{N}}$ be a sequence in $\mathbb{U}_{M}$. Since
$I(\xi_{n})\leq M$, we have from Proposition \ref{prop:ieqihat} that for each
$n\in\mathbb{N}$, there exists some $Q_{n}\in\hat{\mathcal{A}}(\xi_{n})$, such
that
\begin{equation}
\int_{\mathbb{H}_{T}}\left[  \frac{1}{2}\Vert z\Vert^{2}+\hat{\ell}%
(\eta)\right]  Q_{n}(d\mathbf{v})\leq M+\frac{1}{n}. \label{bd1}%
\end{equation}
Recall that $\mathcal{P}_{1}(\mathbb{H}_{T})$ is the space of finite measures
on $\mathbb{H}_{T}$ defined in (\ref{eqn:defofH}) whose first marginal is
Lebesgue measure. It suffices to show that $\{\xi_{n}\}$ is pre-compact, and
every limit point belongs to $\mathbb{U}_{M}$. For this, we prove that:
\begin{enumerate}
[(i)]
\item $\{Q_{n},\xi_{n}\}_{n\in\mathbb{N}}$ is pre-compact in $\mathcal{P}%
_{1}(\mathbb{H}_{T})\times C([0,T]:\mathbb{R}^{d})$;
\item Any limit point $\{Q,\xi\}$ satisfies the properties
\begin{enumerate}
[(a)]
\item $\int_{\mathbb{H}_{T}}\left[  \frac{1}{2}\Vert z\Vert^{2}+\hat{\ell
}(\eta)\right]  Q(d\mathbf{v})\leq M$,
\item (\ref{xi}) holds,
\item (\ref{inv}) holds.
\end{enumerate}
\end{enumerate}
We now prove (i). Since $\mathbb{L}$ is a finite (and hence compact) set and
$[Q_{n}]_{1}=\lambda_{T}$ for all $n$, in order to prove the pre-compactness
of $\{Q_{n}\}$, it suffices to show that for every $\delta>0$, there exists a
$C_{1}\in(0,\infty)$ such that
\begin{equation}
\sup_{n\in\mathbb{N}}Q_{n}\left\{  (s,y,\eta,z)\in\mathbb{H}_{T}:\,\sum
_{j\in\mathbb{L}}\eta_{j}[0,\zeta]+\Vert z\Vert>C_{1}\right\}  \leq\delta.
\label{eq:ratetight}%
\end{equation}
From \eqref{bd1}
\begin{equation}
\int_{\mathbb{H}_{T}}\Vert z\Vert^{2}Q_{n}(d\mathbf{v})\leq2(M+1),\;\int
_{\mathbb{H}_{T}}\hat{\ell}(\eta)Q_{n}(d\mathbf{v})\leq M+1 \label{eq:eq1018}%
\end{equation}
and using \eqref{eq:eq1824} with $\sigma=1,u=1$ and $v=\eta_{j}[0,\zeta]$,
\[
\sum_{j\in\mathbb{L}}\int_{\mathbb{H}_{T}}\eta_{j}[0,\zeta]Q_{n}%
(d\mathbf{v})\leq|\mathbb{L}|\zeta e+\int_{\mathbb{H}_{T}}\hat{\ell}%
(\eta)Q_{n}(d\mathbf{v})\leq|\mathbb{L}|\zeta e+M+1.
\]
The inequality in \eqref{eq:ratetight} is now immediate from the last two
displays. Thus $\{Q_{n}\}$ is pre-compact in $\mathcal{P}_{1}(\mathbb{H}_{T}%
)$. Next we argue the pre-compactness of $\{\xi_{n}\}$. We first show that
\begin{equation}
\sup_{n\in\mathbb{N}}\sup_{0\leq t\leq T}\Vert\xi_{n}(t)\Vert^{2}\doteq
C_{2}<\infty. \label{eq:eq1115}%
\end{equation}
Since $Q_{n}\in\hat{\mathcal{A}}(\xi_{n})$, we have
\[
\xi_{n}(t)=x+\int_{\mathbb{H}_{t}}b(\xi_{n}(s),y)Q_{n}(d\mathbf{v}%
)+\int_{\mathbb{H}_{t}}a(\xi_{n}(s),y)zQ_{n}(d\mathbf{v}).
\]
Using the linear growth property of $a,b$ (Remark \ref{assmp_lingrowth}), we
have
\[
\Vert\xi_{n}(t)\Vert^{2}\leq3\Vert x\Vert^{2}+3T\int_{\mathbb{H}_{t}}%
\kappa_{1}^{2}(\Vert\xi_{n}(s)\Vert+1)^{2}Q_{n}(d\mathbf{v})+3\kappa_{1}%
^{2}\int_{\mathbb{H}_{t}}(\Vert\xi_{n}(s)\Vert+1)^{2}Q_{n}(d\mathbf{v}%
)\int_{\mathbb{H}_{t}}\Vert z\Vert^{2}Q_{n}(d\mathbf{v}).
\]
Thus from \eqref{eq:eq1018}
\[
\Vert\xi_{n}(t)\Vert^{2}\leq3\Vert x\Vert^{2}+6\kappa_{1}^{2}(T+2(M+1))\int
_{[0,t]}(\Vert\xi_{n}(s)\Vert^{2}+1)ds.
\]
The inequality in \eqref{eq:eq1115} now follows by Gronwall's inequality.
Next, consider fluctuations of $\xi_{n}$. For $0\leq t_{0}\leq t_{1}\leq T$,
\begin{align*}
\Vert\xi_{n}(t_{1})-\xi_{n}(t_{0})\Vert &  \leq\int_{\mathbb{H}_{t_{1}%
}\setminus\mathbb{H}_{t_{0}}}\Vert b(\xi_{n}(s),y)\Vert Q_{n}(d\mathbf{v}%
)+\int_{\mathbb{H}_{t_{1}}\setminus\mathbb{H}_{t_{0}}}\Vert a(\xi
_{n}(s),y)\Vert\Vert z\Vert Q_{n}(d\mathbf{v})\\
&  \leq\int_{\mathbb{H}_{t_{1}}\setminus\mathbb{H}_{t_{0}}}\kappa_{1}(\Vert
\xi_{n}(s)\Vert+1)Q_{n}(d\mathbf{v})\\
&  \quad+\left(  \int_{\mathbb{H}_{t_{1}}\setminus\mathbb{H}_{t_{0}}}%
(\kappa_{1}(\Vert\xi_{n}(s)\Vert+1))^{2}Q_{n}(d\mathbf{v})\int_{\mathbb{H}%
_{T}}\Vert z\Vert^{2}Q_{n}(d\mathbf{v})\right)  ^{1/2}\\
&  \leq C_{3}|t_{1}-t_{0}|^{1/2},
\end{align*}
where the last inequality uses \eqref{eq:eq1115} and \eqref{eq:eq1018} and
$C_{3}$ depends only on $C_{2},M,\kappa_{1}$ and $T$. This estimate together
with \eqref{eq:eq1115} shows that $\{\xi_{n}\}$ is pre-compact in
$C([0,T]:\mathbb{R}^{d})$. We now prove (ii). Let $(Q,\xi)$ be a limit point
of the sequence $\{(Q_{n},\xi_{n})\}_{n\in\mathbb{N}}$. Part (a) is immediate
from \eqref{bd1} using Fatou's lemma and the lower semicontinuity of
$\hat{\ell}$. Consider now part (b). We assume without loss of generality that
the full sequence converges to $(Q,\xi)$. From the Lipschitz property of $a$
(Assumption \ref{assum}), we have
\begin{align*}
\int_{\mathbb{H}_{T}}\Vert a(\xi_{n}(s),y)-a(\xi(s),y)\Vert\Vert z\Vert
Q_{n}(d\mathbf{v})  &  \leq d_{{\tiny {\mbox{{lip}}}}}\int_{\mathbb{H}_{T}%
}\Vert\xi_{n}(s)-\xi(s)\Vert\Vert z\Vert Q_{n}(d\mathbf{v})\\
&  \leq d_{{\tiny {\mbox{{lip}}}}}\sup_{s\leq T}\Vert\xi_{n}(s)-\xi
(s)\Vert\int_{\mathbb{H}_{T}}\Vert z\Vert Q_{n}(d\mathbf{v})\\
&  \leq d_{{\tiny {\mbox{{lip}}}}}\sup_{s\leq T}\Vert\xi_{n}(s)-\xi
(s)\Vert\sqrt{T}\sqrt{2(M+1)}\\
&  \rightarrow0
\end{align*}
as $n\rightarrow\infty$. A similar calculation shows that as $n\rightarrow
\infty$
\[
\int_{\mathbb{H}_{T}}\Vert b(\xi_{n}(s),y)-b(\xi(s),y)\Vert Q_{n}%
(d\mathbf{v})\rightarrow0.
\]
Since $(s,y,\eta,z)\mapsto(b(\xi(s),y),a(\xi(s),y))$ is a continuous and
bounded map and $\int_{\mathbb{H}_{T}}\Vert z\Vert^{2}Q_{n}(d\mathbf{v}%
)\leq2(M+1)$, it follows from convergence of $Q_{n}$ to $Q$ that
\begin{equation}
\int_{\mathbb{H}_{T}}[b(\xi(s),y)+a(\xi(s),y)z]Q_{n}(d\mathbf{v}%
)\rightarrow\int_{\mathbb{H}_{T}}[b(\xi(s),y)+a(\xi(s),y)z]Q(d\mathbf{v}).
\label{eq:eq1139}%
\end{equation}
Combining the last three convergence statements we have (b). Next we consider
part (c). By Lemma \ref{Gen_ineq} we have that for $M_{0}\in\lbrack1,\infty)$,
$j\in\mathbb{L}$ and $t\in\lbrack0,T]$,
\begin{equation}
\int_{\mathbb{H}_{T}}\left\vert A_{yj}^{\eta}(\xi_{n}(s))-A_{yj}^{\eta}%
(\xi(s))\right\vert Q_{n}(d\mathbf{v})\leq c_{1}\left(  e^{M_{0}}\kappa
_{2}\sup_{s\leq T}\Vert\xi_{n}(s)-\xi(s)\Vert+\frac{1}{M_{0}}\int
_{\mathbb{H}_{T}}\hat{\ell}(\eta)Q_{n}(d\mathbf{v})\right)  .
\label{eqn:Abounds}%
\end{equation}
Sending $n\rightarrow\infty$ and then $M_{0}\rightarrow\infty$ we see from
\eqref{eq:eq1018} that the left side of \eqref{eqn:Abounds} converges to $0$
as $n\rightarrow\infty$. Finally, by the Skorohod representation theorem,
\eqref{eq:eq1018} and Lemma \ref{lem:genQ},
\begin{equation}
\int_{\mathbb{H}_{T}}A_{yj}^{\eta}(\xi(s))Q_{n}(d\mathbf{v})\rightarrow
\int_{\mathbb{H}_{T}}A_{yj}^{\eta}(\xi(s))Q(d\mathbf{v}). \label{E.1}%
\end{equation}
Combining this with (\ref{eqn:Abounds}) and recalling that $Q_{n}\in
\hat{\mathcal{A}}(\xi_{n})$, we have (c). This completes the proof.
\end{proof}

\section{Large Deviation Upper Bound}

\label{uppbd} The main result of this section is Theorem \ref{thm:mainuppbd},
which shows that for all $F\in C_{b}(C([0,T]:\mathbb{R}^{d}))$
\begin{equation}
\limsup_{\varepsilon\rightarrow0}\varepsilon\log\mathbb{E}\left[  \exp\left(
-\varepsilon^{-1}F(X^{\varepsilon})\right)  \right]  \leq-\inf_{\xi\in
C([0,T]:\mathbb{R}^{d})}\{F(\xi)+I(\xi)\}. \label{eq:uppbd}%
\end{equation}
To do this we show a lower bound on the corresponding variational representations.

Let $\mathcal{P}$ denote the predictable $\sigma$-field on $[0,T]\times\Omega$
associated with the filtration $\left\{  \mathcal{F}_{t}:0\leq t\leq
T\right\}  $. Let $\bar{\mathcal{A}}_{0}$ be the class of all $(\mathcal{P}%
\otimes\mathcal{B}[0,\zeta]\backslash\mathcal{B}[0,\infty))$ measurable maps
$\varphi:[0,\zeta]\times\lbrack0,T]\times\Omega\rightarrow\lbrack0,\infty)$
and let
\[
\bar{\mathcal{A}}\doteq\{\varphi=(\varphi_{ij})_{(i,j)\in\mathbb{T}}%
:\varphi_{ij}\in\bar{\mathcal{A}}_{0}\mbox{ for all }(i,j)\in\mathbb{T}\}.
\]
For $h:[0,T]\rightarrow{\mathbb{R}}^{m}$, let $\tilde{L}_{T}(h)\doteq\frac
{1}{2}\int_{0}^{T}\Vert h(s)\Vert^{2}\ ds$. Also, for $g=(g_{ij}%
)_{(i,j)\in\mathbb{T}}$ such that $g_{ij}:[0,\zeta]\times\lbrack
0,T]\rightarrow\lbrack0,\infty)$, let
\[
\bar{L}_{T}(g)\doteq\sum_{(i,j)\in\mathbb{T}}\int_{[0,\zeta]\times\lbrack
0,T]}\ell(g_{ij}(s,z))\lambda_{\zeta}(dz)ds.
\]
Consider the following spaces. Let
\begin{gather*}
P_{2}^{M}\doteq\{h:[0,T]\rightarrow{\mathbb{R}}^{m}:\tilde{L}_{T}(h)\leq
M\},\\
S^{M}\doteq\{g=(g_{ij})_{(i,j)\in\mathbb{T}}:\mbox{ for each }(i,j)\in
\mathbb{T},g_{ij}:[0,\zeta]\times\lbrack0,T]\rightarrow\lbrack0,\infty
)\mbox{ and }\bar{L}_{T}(g)\leq M\},\\
\quad P_{2}\doteq\cup_{M=1}^{\infty}P_{2}^{M},\quad S\doteq\cup_{M=1}^{\infty
}S^{M},
\end{gather*}
and
\begin{align*}
\mathcal{P}_{2}^{M}  &  \doteq\left\{  \psi:\psi\text{ is }\mathcal{P}%
\backslash\mathcal{B}(\mathbb{R}^{m})\text{ measurable and }\psi\in P_{2}%
^{M}\text{, a.s. }\mathbb{P}\right\}  ,\quad\mathcal{P}_{2}\doteq\cup
_{M=1}^{\infty}\mathcal{P}_{2}^{M},\\
\mathcal{S}^{M}  &  \doteq\left\{  \varphi\in\bar{\mathcal{A}}:\varphi
(\cdot,\omega)\in S^{M},\ \mathbb{P}\mbox{ a.s}\right\}  ,\quad\mathcal{S}%
\doteq\cup_{M=1}^{\infty}\mathcal{S}^{M}.
\end{align*}
Set $\mathcal{U}=\mathcal{P}_{2}\times\mathcal{S}$. For $f=(h,g)\in
P_{2}\times S$, let
\begin{equation}
{L}_{T}(f)=\tilde{L}_{T}(h)+\bar{L}_{T}(g). \label{eqn:sumofcosts}%
\end{equation}

With this notation the variational representation of \cite{BDM11} says that
\begin{equation}
-\varepsilon\log\mathbb{E}\left[  \exp\left(  -\varepsilon^{-1}%
F(X^{\varepsilon})\right)  \right]  =\inf_{u=(\psi,\varphi)\in\mathcal{U}%
}{\mathbb{E}}\left[  {L}_{T}(u)+F\circ\mathcal{G}^{\varepsilon}\left(
\sqrt{\varepsilon}W+\int_{0}^{\cdot}\psi(s)ds,\,\varepsilon N^{\varepsilon
^{-1}\varphi}\right)  \right]  . \label{VR}%
\end{equation}
In fact, a closer inspection of the proof of Theorem 2.8 of \cite{BDM11} (see
\cite[Theorem 2.4]{BCD13}) shows that \eqref{VR} can be strengthened as
follows. For $n\in\mathbb{N}$, define
\[
\bar{\mathcal{A}}_{b,n}=\{\varphi=(\varphi_{ij})\in\mathcal{S}:\varphi
_{ij}(r,s,\omega)\in\lbrack n^{-1},n],\mbox{ for all }(r,s,\omega)\in
\lbrack0,\zeta]\times\lbrack0,T]\times\Omega,(i,j)\in\mathbb{T}\}
\]
and let $\bar{\mathcal{A}}_{b}=\cup_{n=1}^{\infty}\bar{\mathcal{A}}_{b,n}$.
Also let $\mathcal{U}_{b}=\mathcal{P}_{2}\times\bar{\mathcal{A}}_{b}$. Then in
the equality in \eqref{VR}, $\mathcal{U}$ on the right side can be replaced by
$\mathcal{U}_{b}$.

Since $P_{2}^{M}$ is a closed ball in $L^{2}([0,T])$, it is compact under the
weak topology. A $g\in S^{M}$ can be identified with $\theta_{T}^{g}%
=(\theta_{T}^{g_{ij}})_{(i,j)\in\mathbb{T}}$ such that each $\theta
_{T}^{g_{ij}}$ is a measure on $[0,\zeta]\times\lbrack0,T]$, defined by
\[
\theta_{T}^{g_{ij}}(A)=\int_{A}g_{ij}(r,s)\lambda_{\zeta}(dr)ds,\quad
A\in\mathcal{B}([0,\zeta]\times\lbrack0,T]).
\]
With the usual weak convergence topology on the space of finite measures on
$[0,\zeta]\times\lbrack0,T]$, this identification induces a topology on
$S^{M}$ under which it is a compact space. Throughout, we use these topologies
on $P_{2}^{M}$ and $S^{M}$.

Controlled versions of processes will be denoted by an overbar, with the
particular controls used clear from context. Thus for $(\psi,\varphi
)\in\mathcal{U}_{b}$ we consider the coupled equations
\begin{equation}
\left.
\begin{aligned} \label{fastslow_ctrl} d\bar{X}^{\varepsilon}(t) &= b(\bar{X}^{\varepsilon }(t), \bar{Y}^{\varepsilon }(t)) dt + \sqrt{\varepsilon} a(\bar{X}^{\varepsilon }(t), \bar{Y}^{\varepsilon }(t)) dW(t)\\ & \hspace{0.5cm} + a(\bar{X}^{\varepsilon }(t), \bar{Y}^{\varepsilon }(t)) \psi (t) dt\\ d\bar Y^{\varepsilon}(t) &= \sum_{(i,j) \in \mathbb{T}}\int_{r \in [0,\zeta]} (j-i) 1_{\{\bar Y^{\varepsilon}(t-)=i\}} 1_{E_{ij}(\bar X^{\varepsilon}(t))}(r) N_{ij}^{\varepsilon^{-1}\varphi_{ij}}( dr \times dt) \end{aligned}\right\}
\end{equation}
with $(\bar{X}^{\varepsilon}(0),\bar{Y}^{\varepsilon}(0))=(x_{0},y_{0})$.
Recall the map $\mathcal{G}^{\varepsilon}$ introduced below \eqref{fastslow}.
From unique pathwise solvability of \eqref{fastslow_ctrl} and a standard
argument based on Girsanov's theorem (see for example \cite[Section
3.2]{BCD13}) it follows that $\mathcal{G}^{\varepsilon}\left(  \sqrt
{\varepsilon}W+\int_{0}^{\cdot}\psi^{\varepsilon}(s)ds,\,(\varepsilon
N_{ij}^{\varphi_{ij}^{\varepsilon}/\varepsilon})\right)  $ is the unique
solution of \eqref{fastslow_ctrl} with $\psi$ and $\varphi$ replaced by
$\psi^{\varepsilon}$ and $\varphi^{\varepsilon}$. Thus the representation in
\eqref{VR} yields
\begin{equation}
-\varepsilon\log\mathbb{E}\left[  \exp\left(  -\varepsilon^{-1}%
F(X^{\varepsilon})\right)  \right]  =\inf_{u=(\psi,\varphi)\in\mathcal{U}_{b}%
}{\mathbb{E}}\left[  {L}_{T}(u)+F\left(  \bar{X}^{\varepsilon}\right)
\right]  . \label{VR22}%
\end{equation}
The following lemma will be used in proving a tightness property. In many
places below we will consider controls $u$ subject to an a.s. constraint of
the form $L_{T}(u)\leq M$. To simplify the notation, the almost sure
qualification is omitted.

\begin{lemma}
\label{L2bd} For every $M\in(0,\infty)$
\[
\sup_{\varepsilon\in(0,1)}\sup_{u=(\psi,\phi)\in\mathcal{U}_{b}:L_{T}(u)\leq
M}\mathbb{E}\left(  \sup_{0\leq t\leq T}\Vert\bar{X}^{\varepsilon}(t)\Vert
^{2}\right)  <\infty.
\]

\end{lemma}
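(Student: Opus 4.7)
The plan is to write $\bar X^\varepsilon$ in integral form and bound each of the three contributions (drift, diffusion, controlled drift) using the linear growth of $a,b$ from Remark \ref{assmp_lingrowth}, the Burkholder--Davis--Gundy (BDG) inequality, Cauchy--Schwarz, and then close the loop with Gronwall. Since a priori we do not know that $\mathbb{E}\sup_{t\le T}\|\bar X^\varepsilon(t)\|^2$ is finite, I will first localize by the stopping times
\[
\tau_N \doteq \inf\{t\in[0,T] : \|\bar X^\varepsilon(t)\|\ge N\}\wedge T,
\]
establish a bound on $\mathbb{E}\sup_{t\le T}\|\bar X^\varepsilon(t\wedge\tau_N)\|^2$ that is uniform in $N$, $\varepsilon\in(0,1)$, and $u\in\mathcal{U}_b$ with $L_T(u)\le M$, then pass to the limit using Fatou's lemma.

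For $t\in[0,T]$, from \eqref{fastslow_ctrl},
\[
\|\bar X^\varepsilon(t\wedge\tau_N)\|^2 \le 4\|x_0\|^2 + 4\Bigl\|\int_0^{t\wedge\tau_N}b(\bar X^\varepsilon,\bar Y^\varepsilon)ds\Bigr\|^2 + 4\varepsilon\Bigl\|\int_0^{t\wedge\tau_N}a(\bar X^\varepsilon,\bar Y^\varepsilon)dW(s)\Bigr\|^2 + 4\Bigl\|\int_0^{t\wedge\tau_N}a(\bar X^\varepsilon,\bar Y^\varepsilon)\psi(s)ds\Bigr\|^2.
\]
The drift term is handled by Cauchy--Schwarz in time and the linear growth bound $\|b(x,y)\|\le \kappa_1(1+\|x\|)$. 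The stochastic integral is bounded by BDG: its supremum over $[0,t]$ has expectation dominated by $C\varepsilon\,\mathbb{E}\int_0^{t\wedge\tau_N}\|a(\bar X^\varepsilon,\bar Y^\varepsilon)\|^2 ds$, which, using the uniform bound $\varepsilon\le 1$ and $\|a(x,y)\|\le\kappa_1(1+\|x\|)$, is controlled by $C\kappa_1^2\int_0^t(1+\mathbb{E}\sup_{r\le s\wedge\tau_N}\|\bar X^\varepsilon(r)\|^2)ds$. For the controlled drift, Cauchy--Schwarz gives
\[
\Bigl\|\int_0^{t\wedge\tau_N}a(\bar X^\varepsilon,\bar Y^\varepsilon)\psi(s)ds\Bigr\|^2 \le \int_0^{t\wedge\tau_N}\|a(\bar X^\varepsilon,\bar Y^\varepsilon)\|^2 ds\cdot\int_0^T\|\psi(s)\|^2 ds \le 2M\kappa_1^2\int_0^{t\wedge\tau_N}(1+\|\bar X^\varepsilon(s)\|)^2 ds,
\]
where the bound $\int_0^T\|\psi(s)\|^2 ds\le 2M$ uses $\tilde L_T(\psi)\le L_T(u)\le M$.

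Taking supremum over $s\in[0,t]$ and expectation, combining the four estimates yields
\[
\mathbb{E}\sup_{s\le t}\|\bar X^\varepsilon(s\wedge\tau_N)\|^2 \le C_1(x_0,M,T,\kappa_1) + C_2(M,T,\kappa_1)\int_0^t\mathbb{E}\sup_{r\le s}\|\bar X^\varepsilon(r\wedge\tau_N)\|^2 ds,
\]
with constants $C_1,C_2$ independent of $N$, $\varepsilon\in(0,1)$, and $u$ with $L_T(u)\le M$. Note that by construction $\mathbb{E}\sup_{s\le T}\|\bar X^\varepsilon(s\wedge\tau_N)\|^2 \le N^2<\infty$, so Gronwall's lemma applies and gives a bound independent of $N$. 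Sending $N\to\infty$ via Fatou completes the proof. The only subtlety is ensuring the controlling constants really are uniform; this is where the restriction $u\in\mathcal{U}_b$ (so $\varphi$ is bounded, guaranteeing that $\bar X^\varepsilon$ has continuous paths and all stochastic integrals are well-defined martingales) and the quadratic cost constraint on $\psi$ are both essential.
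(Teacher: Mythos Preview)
Your proof is correct and follows essentially the same route as the paper: decompose $\bar X^\varepsilon$ into drift, controlled drift, and stochastic integral terms, bound each via linear growth of $a,b$, Cauchy--Schwarz, and a martingale maximal inequality (the paper uses Doob's inequality where you use BDG, which is equivalent in $L^2$), and close with Gronwall. Your explicit localization by $\tau_N$ before invoking Gronwall is a careful touch the paper leaves implicit.
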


\begin{proof}
Fix $M\in(0,\infty)$ and $u=(\psi,\phi)\in\mathcal{U}_{b}$ with $L_{T}(u)\leq
M$. Now write $\bar{X}^{\varepsilon}$ as
\begin{equation}
\bar{X}^{\varepsilon}(t)=x_{0}+\bar{B}^{\varepsilon}(t)+\bar{A}^{\varepsilon
}(t)+\bar{\mathcal{E}}^{\varepsilon}(t),\label{eqn:decompX}%
\end{equation}
where
\begin{align*}
\bar{B}^{\varepsilon}(t) &  =\int_{0}^{t}b(\bar{X}^{\varepsilon}(s),\bar
{Y}^{\varepsilon}(s))ds;\\
\bar{A}^{\varepsilon}(t) &  =\int_{0}^{t}a(\bar{X}^{\varepsilon}(s),\bar
{Y}^{\varepsilon}(s))\psi^{\varepsilon}(s)ds;\\
\bar{\mathcal{E}}^{\varepsilon}(t) &  =\sqrt{\varepsilon}\int_{0}^{t}a(\bar
{X}^{\varepsilon}(s),\bar{Y}^{\varepsilon}(s))dW(s).
\end{align*}
By Remark \ref{assmp_lingrowth},
\begin{align*}
\mathbb{E}\left[  \sup_{r\leq t}\Vert\bar{A}^{\varepsilon}(r)\Vert^{2}\right]
&  \leq\mathbb{E}\int_{0}^{t}\Vert a(\bar{X}^{\varepsilon}(s),\bar
{Y}^{\varepsilon}(s))\Vert^{2}ds\int_{0}^{t}\Vert\psi(s)\Vert^{2}ds\\
&  \leq4\kappa_{1}M\int_{0}^{t}(1+\mathbb{E}\Vert\bar{X}^{\varepsilon}%
(s)\Vert^{2})ds\\
&  \leq4\kappa_{1}MT+4\kappa_{1}M\int_{0}^{t}\mathbb{E}\left(  \sup_{s\leq
v}\Vert\bar{X}^{\varepsilon}(s)\Vert^{2}\right)  dv.
\end{align*}
Also, by Doob's maximal inequality we have with $C_{1}=8\kappa_{1}^{2}$,
\begin{align}
\mathbb{E}\left(  \sup_{r\leq t}\Vert\bar{\mathcal{E}}^{\varepsilon}%
(r)\Vert^{2}\right)   &  \leq4\varepsilon\mathbb{E}\left[  \int_{0}^{t}\Vert
a(\bar{X}^{\varepsilon}(s),\bar{Y}^{\varepsilon}(s))\Vert^{2}ds\right]
\nonumber\\
&  \leq8\kappa_{1}^{2}\varepsilon\int_{0}^{t}\left(  1+\mathbb{E}\left[
\Vert\bar{X}^{\varepsilon}(s)\Vert^{2}\right]  \right)  ds\label{Error_term}\\
&  \leq C_{1}\varepsilon T+C_{1}\varepsilon\int_{0}^{t}\mathbb{E}\left(
\sup_{s\leq v}\Vert\bar{X}^{\varepsilon}(s)\Vert^{2}\right)  dv.\nonumber
\end{align}
Similar calculation shows that
\[
\mathbb{E}\left[  \sup_{r\leq t}\Vert\bar{B}^{\varepsilon}(r)\Vert^{2}\right]
\leq2\kappa_{1}T+2\kappa_{1}T\int_{0}^{t}\mathbb{E}\left(  \sup_{s\leq v}%
\Vert\bar{X}^{\varepsilon}(s)\Vert^{2}\right)  .
\]
Hence from (\ref{eqn:decompX}) it follows that with $C_{2}=C_{1}%
(T+1)+2\kappa_{1}(T+1)(1+2M)$
\[
\mathbb{E}\left[  \sup_{r\leq t}\Vert\bar{X}^{\varepsilon}(r)\Vert^{2}\right]
\leq C_{2}+C_{2}\int_{0}^{t}\mathbb{E}\left(  \sup_{s\leq v}\Vert\bar
{X}^{\varepsilon}(s)\Vert^{2}\right)  dv.
\]
The lemma now follows from Gronwall's inequality.
\end{proof}

\begin{proposition}
\label{ctrlproc_tight} Fix $M\in(0,\infty)$. For $\varepsilon\in(0,1)$ let
$u^{\varepsilon}=(\psi^{\varepsilon},\varphi^{\varepsilon})\in\mathcal{U}_{b}$
be such that $L_{T}(u^{\varepsilon})\leq M$. Let $(\bar{X}^{\varepsilon}%
,\bar{Y}^{\varepsilon})$ solve \eqref{fastslow_ctrl} with $\psi$ and $\varphi$
replaced by $\psi^{\varepsilon}$ and $\varphi^{\varepsilon}$. Then $\{\bar
{X}^{\varepsilon}\}_{\varepsilon\in(0,1)}$ is tight in $C([0,T]:\mathbb{R}%
^{d})$.
\end{proposition}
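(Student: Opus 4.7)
The plan is to deduce tightness in $C([0,T]:\mathbb{R}^d)$ from the standard two-ingredient criterion: (a) a uniform bound on $\sup_{t\le T}\|\bar X^\varepsilon(t)\|$ in $L^2$, and (b) a uniform modulus-of-continuity estimate. Ingredient (a) is supplied for free by Lemma \ref{L2bd}. Since $\bar X^\varepsilon(0)=x_0$ is deterministic, it then suffices to show that for every $\eta>0$,
\[
\lim_{\delta\downarrow 0}\sup_{\varepsilon\in(0,1)}\mathbb{P}\!\left(\sup_{|t_1-t_0|<\delta}\|\bar X^\varepsilon(t_1)-\bar X^\varepsilon(t_0)\|>\eta\right)=0.
\]
In particular, Lemma \ref{L2bd} and Markov's inequality imply that given any $\eta>0$ we can choose $R<\infty$ such that the event $\Omega_R^\varepsilon\doteq\{\sup_{t\le T}\|\bar X^\varepsilon(t)\|\le R\}$ has probability at least $1-\eta$ uniformly in $\varepsilon$.

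The main work is to analyze the three terms in the decomposition \eqref{eqn:decompX} pathwise on this high-probability set. For the drift term, linear growth (Remark \ref{assmp_lingrowth}) gives
\[
\|\bar B^\varepsilon(t_1)-\bar B^\varepsilon(t_0)\|\le\kappa_1(1+R)|t_1-t_0|
\]
on $\Omega_R^\varepsilon$, so $\bar B^\varepsilon$ is uniformly Lipschitz. For $\bar A^\varepsilon$, Cauchy--Schwarz together with the cost bound $\int_0^T\|\psi^\varepsilon(s)\|^2\,ds\le 2M$ (from $L_T(u^\varepsilon)\le M$) yields
\[
\|\bar A^\varepsilon(t_1)-\bar A^\varepsilon(t_0)\|\le\kappa_1(1+R)\sqrt{2M}\,|t_1-t_0|^{1/2}
\]
on $\Omega_R^\varepsilon$, a uniform H\"older-$\tfrac12$ bound. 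Both constants are $\varepsilon$-independent.

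The martingale term is handled by the $\sqrt\varepsilon$ prefactor: repeating the estimate in \eqref{Error_term} and applying Lemma \ref{L2bd}, Doob's maximal inequality gives
\[
\mathbb{E}\!\left[\sup_{t\le T}\|\bar{\mathcal E}^\varepsilon(t)\|^2\right]\le C\varepsilon,
\]
so $\bar{\mathcal E}^\varepsilon\to 0$ uniformly in probability and is trivially tight. Combining the three pieces, for any $\eta>0$ and $\delta$ small enough the deterministic modulus bounds on $\bar B^\varepsilon$ and $\bar A^\varepsilon$, plus a Chebyshev estimate on $\bar{\mathcal E}^\varepsilon$, push $\mathbb{P}(w_T(\bar X^\varepsilon,\delta)>\eta)$ below any prescribed tolerance uniformly in $\varepsilon$, and the Arzel\`a--Ascoli-type tightness criterion on $C([0,T]:\mathbb{R}^d)$ closes the argument.

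I do not anticipate a substantive obstacle: because $\mathbb{L}$ is a finite set and the coefficients $a,b$ are bounded in the $y$-variable, the fast jump process $\bar Y^\varepsilon$ need not be controlled separately, and the Poisson-type controls $\varphi^\varepsilon$ enter only through $\bar Y^\varepsilon$ and hence do not appear in any of the modulus estimates above. The only real care is bookkeeping, namely to check that every constant produced depends only on $M$, $\kappa_1$, $T$, and the uniform bound from Lemma \ref{L2bd}, none of which depend on $\varepsilon$.
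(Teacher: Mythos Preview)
Your proposal is correct and follows essentially the same approach as the paper: both use the decomposition \eqref{eqn:decompX}, show $\bar{\mathcal{E}}^\varepsilon\to 0$ uniformly via \eqref{Error_term} and Lemma \ref{L2bd}, and control the increments of $\bar A^\varepsilon$ and $\bar B^\varepsilon$ through linear growth plus Cauchy--Schwarz and the bound $\tilde L_T(\psi^\varepsilon)\le M$. The only cosmetic difference is that the paper estimates $\mathbb{E}\sup_{|t_2-t_1|\le h}\|\bar A^\varepsilon(t_2)-\bar A^\varepsilon(t_1)\|^2$ directly (absorbing $\sup_t\|\bar X^\varepsilon(t)\|^2$ in expectation via Lemma \ref{L2bd}), whereas you first localize to $\Omega_R^\varepsilon$ and then obtain pathwise H\"older/Lipschitz bounds; both routes yield the same conclusion with the same ingredients.
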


\begin{proof}
Write $\bar{X}^{\varepsilon}$ as in (\ref{eqn:decompX}). From
\eqref{Error_term} and Lemma \ref{L2bd}
\[
\mathbb{E}\sup_{t\in\lbrack0,T]}\Vert\bar{\mathcal{E}}^{\varepsilon}%
(t)\Vert^{2}\leq C_{1}\varepsilon T\left(  1+\sup_{\varepsilon\in
(0,1)}\mathbb{E}\sup_{t\in\lbrack0,T]}\Vert\bar{X}^{\varepsilon}(t)\Vert
^{2}\right)  \rightarrow0,
\]
as $\varepsilon\rightarrow0$. It thus suffices to prove the tightness of
$\{\bar{A}^{\varepsilon}\}$ and $\{\bar{B}^{\varepsilon}\}$. For this, note
that for $0\leq h\leq T$ and $\varepsilon\in(0,1)$
\begin{align*}
&  \mathbb{E}\sup_{0\leq t_{1}\leq t_{2}\leq T,t_{2}\leq t_{1}+h}\Vert\bar
{A}^{\varepsilon}(t_{2})-\bar{A}^{\varepsilon}(t_{1})\Vert^{2}\\
&  \quad=\mathbb{E}\sup_{0\leq t_{1}\leq t_{2}\leq T,t_{2}\leq t_{1}%
+h}\left\Vert \int_{t_{1}}^{t_{2}}a(\bar{X}^{\varepsilon}(s),\bar
{Y}^{\varepsilon}(s))\psi^{\varepsilon}(s)ds\right\Vert ^{2}\\
&  \quad\leq2\kappa_{1}^{2}\mathbb{E}\sup_{0\leq t_{1}\leq T-h}\left(
\int_{t_{1}}^{t_{1}+h}(1+\Vert\bar{X}^{\varepsilon}(s)\Vert^{2})\ ds\int
_{0}^{T}\Vert\psi^{\varepsilon}(s)\Vert^{2}ds\right) \\
&  \quad\leq4\kappa_{1}^{2}Mh\left(  1+\sup_{\varepsilon\in(0,1)}%
\mathbb{E}\sup_{0\leq t\leq T}\Vert\bar{X}^{\varepsilon}(t)\Vert^{2}\right)  ,
\end{align*}
where the last inequality used the fact that $L_{T}(u^{\varepsilon})\leq M$.
Lemma \ref{L2bd} now gives that $\{\bar{A}^{\varepsilon}\}$ is tight in
$C([0,T]:\mathbb{R}^{d})$. Similar calculations show that $\{\bar
{B}^{\varepsilon}\}$ is tight as well. The result follows.
\end{proof}

\begin{remark}
\label{rem:rem530}\textrm{\ The estimate in Proposition \ref{ctrlproc_tight}
in particular shows that for every $M\in(0,\infty)$ there exists some
$c_{2}(M)\in(0,\infty),$ such that%
\[
\sup_{\varepsilon\in(0,1)}\sup_{0\leq s\leq T-\Delta}\;\sup_{u^{\varepsilon
}\in\mathcal{U}_{b}:L_{T}(u^{\varepsilon})\leq M}\mathbb{E}\sup_{0\leq
t\leq\Delta}\Vert\bar{X}^{\varepsilon}(s+t)-\bar{X}^{\varepsilon}(s)\Vert
^{2}\leq c_{2}(M)\Delta
\]
for any $\Delta\in\lbrack0,T]$. }
\end{remark}

Given $\varepsilon>0$, let $u^{\varepsilon}=(\psi^{\varepsilon},\varphi
^{\varepsilon})\in\mathcal{U}_{b}$ and $(\bar{X}^{\varepsilon},\bar
{Y}^{\varepsilon})$ be as in Proposition \ref{ctrlproc_tight}. Note that by
(\ref{fastslow_ctrl}) only the controlled rates $\varphi_{\bar{Y}%
^{\varepsilon}(t-),\cdot}^{\varepsilon}$ affect the evolution of $(\bar
{X}^{\varepsilon},\bar{Y}^{\varepsilon})$. In proving the Laplace upper bound,
we can (and will) assume without loss of generality that
\begin{equation}
\varphi_{ij}^{\varepsilon}(t,z)=1\mbox{ for all }(i,j)\in\mathbb{T}%
\mbox{ such that }i\in\mathbb{L}\setminus\{Y^{\varepsilon}(t-)\}\mbox{
and }(t,z)\in\lbrack0,T]\times\lbrack0,\zeta]. \label{eq:eq752-17}%
\end{equation}

The proof of the Laplace upper bound relies on the asymptotic analysis of the
following occupation measure. We fix a collection $\{\Delta_{\varepsilon
}\}_{\varepsilon>0}$ of positive reals such that
\begin{equation}
\Delta_{\varepsilon}\rightarrow0,\;\frac{\Delta_{\varepsilon}}{\varepsilon
}\rightarrow\infty,\;\varepsilon\rightarrow0. \label{eq:eq501}%
\end{equation}
By convention we will take $\varphi_{ij}^{\varepsilon}(t,r)=1$ if
$(i,j)\notin\mathbb{T}$. Also by convention we set $\varphi_{ij}^{\varepsilon
}(u,r)=1$ for all $(i,j)$, $\psi^{\varepsilon}(u)=0$ and $\bar{Y}%
^{\varepsilon}(u)=\bar{Y}^{\varepsilon}(T)$ if $u\in\lbrack T,T+\Delta
_{\varepsilon}]$. For $t\in\lbrack0,T]$, $\eta^{\varepsilon}(t)\in
(\mathcal{M}_{F}[0,\zeta])^{|\mathbb{L}|}$ is defined to be $\eta
^{\varepsilon}(t)=(\eta_{j}^{\varepsilon}(t))_{j\in\mathbb{L}}$, where
\begin{equation}
\eta_{j}^{\varepsilon}(t)(F)=\sum_{i\in\mathbb{L}}1_{\{Y^{\varepsilon
}(t-)=i\}}\int_{F}\varphi_{ij}^{\varepsilon}(t,r)\lambda_{\zeta}(dr),\quad
F\in\mathcal{B}[0,\zeta]. \label{eq:eq1347}%
\end{equation}
Then define $Q^{\varepsilon}\in\mathcal{P}_{1}(\mathbb{H}_{T})$ by
\begin{equation}
Q^{\varepsilon}(A\times B\times C\times D)\doteq\int_{\lbrack0,T]}%
1_{A}(s)\left(  \frac{1}{\Delta_{\varepsilon}}\int_{s}^{s+\Delta_{\varepsilon
}}1_{B}(\bar{Y}^{\varepsilon}(u))1_{C}(\eta^{\varepsilon}(u))1_{D}%
(\psi^{\varepsilon}(u))du\right)  ds. \label{eqn:defofQ}%
\end{equation}
The main step in the proof will be to characterize the limit points of
$Q^{\varepsilon}$. We begin with some preliminary estimates.

\begin{lemma}
\label{lem:diffest} Fix $M\in(0,\infty)$. For $\varepsilon\in(0,1)$, let
$u^{\varepsilon}=(\psi^{\varepsilon},\varphi^{\varepsilon})\in\mathcal{U}_{b}$
satisfy $L_{T}(u^{\varepsilon})\leq M$ and let $(\bar{X}^{\varepsilon},\bar
{Y}^{\varepsilon})$, be as in Proposition \ref{ctrlproc_tight}. Let
$\Delta_{\varepsilon}$ be as in \eqref{eq:eq501}. Then
\begin{equation}
\sup_{0\leq t\leq T}\mathbb{E}\left\Vert \int_{0}^{t}b(\bar{X}^{\varepsilon
}(s),\bar{Y}^{\varepsilon}(s))ds-\int_{0}^{t}\frac{1}{\Delta_{\varepsilon}%
}\int_{s}^{(s+\Delta_{\varepsilon})\wedge T}b(\bar{X}^{\varepsilon}(s),\bar
{Y}^{\varepsilon}(u))duds\right\Vert ^{2}\rightarrow0, \label{sup1}%
\end{equation}
and
\begin{equation}
\sup_{0\leq t\leq T}\mathbb{E}\left\Vert \int_{0}^{t}a(\bar{X}^{\varepsilon
}(s),\bar{Y}^{\varepsilon}(s))\psi^{\varepsilon}(s)ds-\int_{0}^{t}\frac
{1}{\Delta_{\varepsilon}}\int_{s}^{(s+\Delta_{\varepsilon})\wedge T}a(\bar
{X}^{\varepsilon}(s),\bar{Y}^{\varepsilon}(u))\psi^{\varepsilon}%
(u)duds\right\Vert ^{2}\rightarrow0, \label{sup2}%
\end{equation}
as $\varepsilon\rightarrow0$.
\end{lemma}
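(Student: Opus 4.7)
The plan is to exploit Fubini's theorem together with the Lipschitz property of $b$ and $a$ in $x$ and the quantitative continuity estimate from Remark \ref{rem:rem530}. In both \eqref{sup1} and \eqref{sup2}, the inner double integral mixes $\bar X^\varepsilon(s)$ with $\bar Y^\varepsilon(u)$ (and $\psi^\varepsilon(u)$); after swapping the order of integration, the inner integral becomes a time average of $\bar X^\varepsilon(s)$ over a window of length $\Delta_\varepsilon$, which by $L^2$-continuity of $\bar X^\varepsilon$ is close to $\bar X^\varepsilon(u)$.

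For \eqref{sup1}, Fubini gives
\[
\int_0^t\frac{1}{\Delta_\varepsilon}\int_s^{(s+\Delta_\varepsilon)\wedge T}b(\bar X^\varepsilon(s),\bar Y^\varepsilon(u))\,du\,ds
=\int_0^{(t+\Delta_\varepsilon)\wedge T}\frac{1}{\Delta_\varepsilon}\int_{(u-\Delta_\varepsilon)^+}^{u\wedge t}b(\bar X^\varepsilon(s),\bar Y^\varepsilon(u))\,ds\,du.
\]
On the dominant range $u\in[\Delta_\varepsilon,t]$, the inner integral is taken over $[u-\Delta_\varepsilon,u]$ and, by Assumption \ref{assum}(1), equals $b(\bar X^\varepsilon(u),\bar Y^\varepsilon(u))+e^\varepsilon(u)$ with $\|e^\varepsilon(u)\|\le (d_{\mathrm{lip}}/\Delta_\varepsilon)\int_{u-\Delta_\varepsilon}^u\|\bar X^\varepsilon(s)-\bar X^\varepsilon(u)\|\,ds$. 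Subtracting from $\int_0^t b(\bar X^\varepsilon(s),\bar Y^\varepsilon(s))\,ds$ leaves three pieces: (a) $\int_0^{\Delta_\varepsilon}b(\bar X^\varepsilon,\bar Y^\varepsilon)\,ds$; (b) $\int_{\Delta_\varepsilon}^t e^\varepsilon(u)\,du$; (c) the two boundary averages for $u\in[0,\Delta_\varepsilon]$ and $u\in[t,(t+\Delta_\varepsilon)\wedge T]$. Pieces (a) and (c) are $O(\Delta_\varepsilon^2)$ in $L^2$ by the linear growth in Remark \ref{assmp_lingrowth} and Lemma \ref{L2bd}. For (b), Cauchy--Schwarz in $L^2([0,T],du)$ and Remark \ref{rem:rem530} give
\[
\mathbb{E}\Bigl\|\int_{\Delta_\varepsilon}^t e^\varepsilon(u)\,du\Bigr\|^2\le T\int_{\Delta_\varepsilon}^t\frac{d_{\mathrm{lip}}^2}{\Delta_\varepsilon}\int_{u-\Delta_\varepsilon}^u\mathbb{E}\|\bar X^\varepsilon(s)-\bar X^\varepsilon(u)\|^2\,ds\,du\le T^2 d_{\mathrm{lip}}^2\,c_2(M)\,\Delta_\varepsilon,
\]
which is $o(1)$ uniformly in $t$.

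For \eqref{sup2}, the identical Fubini/Lipschitz reduction applies with $a$ replacing $b$, but every term picks up the factor $\psi^\varepsilon(u)$. The extra ingredient is the a.s. bound $\int_0^T\|\psi^\varepsilon(u)\|^2\,du\le 2L_T(u^\varepsilon)\le 2M$, which allows Cauchy--Schwarz to separate the noise from the Lipschitz error: setting $e_a^\varepsilon(u)$ analogously to $e^\varepsilon(u)$ (with $a$ in place of $b$),
\[
\mathbb{E}\Bigl\|\int_{\Delta_\varepsilon}^t e_a^\varepsilon(u)\psi^\varepsilon(u)\,du\Bigr\|^2\le 2M\int_0^T\mathbb{E}\|e_a^\varepsilon(u)\|^2\,du\le 2MT\,d_{\mathrm{lip}}^2\,c_2(M)\,\Delta_\varepsilon.
\]
The initial slice $\int_0^{\Delta_\varepsilon}a(\bar X^\varepsilon,\bar Y^\varepsilon)\psi^\varepsilon\,ds$ and the two boundary pieces are handled identically: Cauchy--Schwarz separates $\|a\|^2$ from $\|\psi^\varepsilon\|^2$, and then the linear growth of $a$ combined with Lemma \ref{L2bd} yields $O(\Delta_\varepsilon)$.

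The main obstacle I anticipate is precisely the $\psi^\varepsilon$ factor in \eqref{sup2}: since only an $L^2$ bound on $\psi^\varepsilon$ is available, one cannot take expectations before separating $\psi^\varepsilon$ from the Lipschitz error. The a.s. (not merely $L^1$) bound on $\int\|\psi^\varepsilon\|^2$ is what makes the Cauchy--Schwarz step work and lets the two estimates go through with the same $O(\Delta_\varepsilon)$ rate, independent of $t\in[0,T]$.
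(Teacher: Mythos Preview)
Your proposal is correct and follows essentially the same route as the paper's proof: Fubini to swap the order of integration, a three-piece decomposition (two boundary slices plus the main range $[\Delta_\varepsilon,t]$), the Lipschitz property of $a$ (resp.\ $b$) combined with Remark~\ref{rem:rem530} for the main piece, and linear growth plus Lemma~\ref{L2bd} for the boundary terms; for \eqref{sup2} the paper also relies on the a.s.\ bound $\int_0^T\|\psi^\varepsilon\|^2\,ds\le 2M$ exactly as you do. The only cosmetic difference is bookkeeping: the paper writes the single integral as $\int_0^t\frac{1}{\Delta_\varepsilon}\int_{u-\Delta_\varepsilon}^u a(\bar X^\varepsilon(u),\bar Y^\varepsilon(u))\psi^\varepsilon(u)\,ds\,du$ and subtracts directly, whereas you isolate the Lipschitz remainder as $e^\varepsilon(u)$, but the resulting estimates are the same.
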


\begin{proof}
We only prove \eqref{sup2}. The proof of \eqref{sup1} is similar but easier
and therefore omitted. Changing the order of the integration, for $t\geq
\Delta_{\varepsilon}$,
\begin{align*}
\int_{0}^{t}\frac{1}{\Delta_{\varepsilon}}\int_{s}^{(s+\Delta_{\varepsilon
})\wedge T}a(\bar{X}^{\varepsilon}(s),\bar{Y}^{\varepsilon}(u))\psi
^{\varepsilon}(u)duds= &  \int_{0}^{\Delta_{\varepsilon}}\frac{1}%
{\Delta_{\varepsilon}}\int_{0}^{u}a(\bar{X}^{\varepsilon}(s),\bar
{Y}^{\varepsilon}(u))\psi^{\varepsilon}(u)dsdu\\
\quad\quad &  +\int_{\Delta_{\varepsilon}}^{t}\frac{1}{\Delta_{\varepsilon}%
}\int_{u-\Delta_{\varepsilon}}^{u}a(\bar{X}^{\varepsilon}(s),\bar
{Y}^{\varepsilon}(u))\psi^{\varepsilon}(u)dsdu\\
\quad\quad &  +\int_{t}^{(t+\Delta_{\varepsilon})\wedge T}\frac{1}%
{\Delta_{\varepsilon}}\int_{u-\Delta_{\varepsilon}}^{t}a(\bar{X}^{\varepsilon
}(s),\bar{Y}^{\varepsilon}(u))\psi^{\varepsilon}(u)dsdu.
\end{align*}
Also for $t\geq\Delta_{\varepsilon}$,
\[
\int_{0}^{t}a(\bar{X}^{\varepsilon}(s),\bar{Y}^{\varepsilon}(s))\psi
^{\varepsilon}(s)ds=\int_{0}^{t}\frac{1}{\Delta_{\varepsilon}}\int
_{u-\Delta_{\varepsilon}}^{u}a(\bar{X}^{\varepsilon}(u),\bar{Y}^{\varepsilon
}(u))\psi^{\varepsilon}(u)dsdu.
\]
Thus for such $t$,
\begin{align}
&  \left\Vert \int_{0}^{t}a(\bar{X}^{\varepsilon}(s),\bar{Y}^{\varepsilon
}(s))\psi^{\varepsilon}(s)ds-\int_{0}^{t}\frac{1}{\Delta_{\varepsilon}}%
\int_{s}^{(s+\Delta_{\varepsilon})\wedge T}a(\bar{X}^{\varepsilon}(s),\bar
{Y}^{\varepsilon}(u))\psi^{\varepsilon}(u)duds\right\Vert \nonumber\\
&  \quad\leq\left\Vert \int_{0}^{\Delta_{\varepsilon}}\frac{1}{\Delta
_{\varepsilon}}\int_{u-\Delta_{\varepsilon}}^{u}a(\bar{X}^{\varepsilon
}(u),\bar{Y}^{\varepsilon}(u))\psi^{\varepsilon}(u)dsdu-\int_{0}%
^{\Delta_{\varepsilon}}\frac{1}{\Delta_{\varepsilon}}\int_{0}^{u}a(\bar
{X}^{\varepsilon}(s),\bar{Y}^{\varepsilon}(u))\psi^{\varepsilon}%
(u)dsdu\right\Vert \nonumber\\
&  \quad\quad+\left\Vert \int_{\Delta_{\varepsilon}}^{t}\frac{1}%
{\Delta_{\varepsilon}}\int_{u-\Delta_{\varepsilon}}^{u}a(\bar{X}^{\varepsilon
}(u),\bar{Y}^{\varepsilon}(u))\psi^{\varepsilon}(u)dsdu-\int_{\Delta
_{\varepsilon}}^{t}\frac{1}{\Delta_{\varepsilon}}\int_{u-\Delta_{\varepsilon}%
}^{u}a(\bar{X}^{\varepsilon}(s),\bar{Y}^{\varepsilon}(u))\psi^{\varepsilon
}(u)dsdu\right\Vert \nonumber\\
&  \quad\quad+\left\Vert \int_{t}^{(t+\Delta_{\varepsilon})\wedge T}\frac
{1}{\Delta_{\varepsilon}}\int_{u-\Delta_{\varepsilon}}^{t}a(\bar
{X}^{\varepsilon}(s),\bar{Y}^{\varepsilon}(u))\psi^{\varepsilon}%
(u)dsdu\right\Vert \nonumber\\
&  \quad=T_{t}^{(1)}+T_{t}^{(2)}+T_{t}^{(3)}.\label{eq:eq259_17}%
\end{align}
For $T^{(1)}$ we have
\begin{align}
(T_{t}^{(1)})^{2} &  \leq2\left\Vert \int_{0}^{\Delta_{\varepsilon}}\frac
{1}{\Delta_{\varepsilon}}\int_{u-\Delta_{\varepsilon}}^{u}a(\bar
{X}^{\varepsilon}(u),\bar{Y}^{\varepsilon}(u))\psi^{\varepsilon}%
(u)dsdu\right\Vert ^{2}\nonumber\\
&  \quad+2\left\Vert \int_{0}^{\Delta_{\varepsilon}}\frac{1}{\Delta
_{\varepsilon}}\int_{0}^{u}a(\bar{X}^{\varepsilon}(s),\bar{Y}^{\varepsilon
}(u))\psi^{\varepsilon}(u)dsdu\right\Vert ^{2}\nonumber\\
&  \leq4\left(  \kappa_{1}\sup_{0\leq s\leq T}(\Vert\bar{X}^{\varepsilon
}(s)\Vert+1)\right)  ^{2}\left(  \int_{0}^{\Delta_{\varepsilon}}\Vert
\psi^{\varepsilon}(u)\Vert du\right)  ^{2}\nonumber\\
&  \leq16\kappa_{1}^{2}\left(  \sup_{0\leq s\leq T}\Vert\bar{X}^{\varepsilon
}(s)\Vert^{2}+1\right)  \cdot\Delta_{\varepsilon}M.\label{eq:eq529}%
\end{align}
Similarly,
\begin{equation}
(T_{t}^{(3)})^{2}\leq8\kappa_{1}^{2}\left(  \sup_{0\leq s\leq t}\Vert\bar
{X}^{\varepsilon}(s)\Vert^{2}+1\right)  \cdot\Delta_{\varepsilon
}M.\label{eq:eq530}%
\end{equation}
Thus in view of Lemma \ref{L2bd}, $\sup_{0\leq t\leq T}\mathbb{E}(T_{t}%
^{(i)})^{2}\rightarrow0$, as $\varepsilon\rightarrow0$, for $i=1,3$. For the
second term, using the Lipschitz property of $a$ we have%
\begin{align*}
(T_{t}^{(2)})^{2} &  \leq\int_{\Delta_{\varepsilon}}^{t}\int_{u-\Delta
_{\varepsilon}}^{u}\frac{1}{\Delta_{\varepsilon}^{2}}d_{{\tiny {\mbox{{lip}}}%
}}^{2}\Vert\bar{X}^{\varepsilon}(u)-\bar{X}^{\varepsilon}(s)\Vert^{2}%
dsdu\cdot\int_{\Delta_{\varepsilon}}^{t}\int_{u-\Delta_{\varepsilon}}%
^{u}\left\Vert \psi^{\varepsilon}(u)\right\Vert ^{2}dsdu\\
&  \leq\int_{\Delta_{\varepsilon}}^{t}\int_{u-\Delta_{\varepsilon}}^{u}%
\frac{1}{\Delta_{\varepsilon}^{2}}d_{{\tiny {\mbox{{lip}}}}}^{2}\Vert\bar
{X}^{\varepsilon}(u)-\bar{X}^{\varepsilon}(s)\Vert^{2}dsdu\cdot\Delta
_{\varepsilon}2M,
\end{align*}
where $d_{{\tiny {\mbox{{lip}}}}}$ is as in Assumption \ref{assum}. Using
Remark \ref{rem:rem530} we have
\[
\sup_{\Delta_{\varepsilon}\leq t\leq T}\mathbb{E}(T_{t}^{(2)})^{2}\leq
\frac{d_{{\tiny {\mbox{{lip}}}}}^{2}}{\Delta_{\varepsilon}^{2}}T\Delta
_{\varepsilon}c_{2}(M)\Delta_{\varepsilon}^{2}2M\leq2d_{{\tiny {\mbox{{lip}}}%
}}^{2}c_{2}(M)T\Delta_{\varepsilon}M,
\]
which converges to 0 as $\varepsilon\rightarrow0$. Thus we have shown that
\eqref{sup2} holds with the $\sup_{0\leq t\leq T}$ on the left replaced by
$\sup_{\Delta_{\varepsilon}\leq t\leq T}$. A similar calculation can be used
to prove the statement for $\sup_{0\leq t\leq\Delta_{\varepsilon}}%
\mathbb{E}(T_{t}^{(2)})^{2}$. The result follows.
\end{proof}

Next, with $u^{\varepsilon}=(\psi^{\varepsilon},\varphi^{\varepsilon}%
)\in\mathcal{U}_{b}$ and $\bar{X}^{\varepsilon}$ as in Proposition
\ref{ctrlproc_tight}, define $C([0,T]:\mathbb{R}^{d})$-valued random variables
$\bar{Z}^{\varepsilon}$ by
\begin{equation}
\bar{Z}^{\varepsilon}(t)\doteq\bar{X}^{\varepsilon}(t)-x_{0}-\int
_{\mathbb{H}_{t}}b(\bar{X}^{\varepsilon}(s),y)Q^{\varepsilon}(d\mathbf{v}%
)-\int_{\mathbb{H}_{t}}a(\bar{X}^{\varepsilon}(s),y)zQ^{\varepsilon
}(d\mathbf{v}),\;t\in\lbrack0,T]. \label{eq:qe}%
\end{equation}
The following lemma shows that $\bar{Z}^{\varepsilon}(t)\rightarrow0$, for all
$t\in[0,T]$, as $\varepsilon\rightarrow0$.

\begin{lemma}
\label{lem:lem748} Fix $M\in(0,\infty)$ and for $\varepsilon\in(0,1)$ let
$u^{\varepsilon}=(\psi^{\varepsilon},\varphi^{\varepsilon})$ be as in
Proposition \ref{ctrlproc_tight}. Also, let $\bar{Z}^{\varepsilon}$ be as in
\eqref{eq:qe}. Then
\[
\sup_{0\leq t\leq T}{\mathbb{E}}\Vert\bar{Z}^{\varepsilon}(t)\Vert
^{2}\rightarrow0,
\]
as $\varepsilon\rightarrow0$.
\end{lemma}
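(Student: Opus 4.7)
The plan is to unravel the definition of $Q^\varepsilon$ so that the measure-theoretic integrals appearing in $\bar{Z}^\varepsilon$ become the averaged integrals already studied in Lemma \ref{lem:diffest}, and then invoke that lemma together with the vanishing of the Brownian term to close the argument.

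First I would substitute the controlled dynamics \eqref{fastslow_ctrl} for $\bar X^\varepsilon(t)-x_0$, producing the four pieces
$$
\int_0^t b(\bar X^\varepsilon(s),\bar Y^\varepsilon(s))\,ds,\quad
\int_0^t a(\bar X^\varepsilon(s),\bar Y^\varepsilon(s))\psi^\varepsilon(s)\,ds,\quad
\sqrt{\varepsilon}\int_0^t a(\bar X^\varepsilon(s),\bar Y^\varepsilon(s))\,dW(s),
$$
to be compared with the two $Q^\varepsilon$-integrals in \eqref{eq:qe}. The Brownian piece is handled exactly as in the derivation of \eqref{Error_term}: by Doob's inequality, Assumption \ref{assum}(1), and Lemma \ref{L2bd}, its $L^2$-norm is $O(\sqrt{\varepsilon})$ uniformly in $t\in[0,T]$.

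Next I would use the explicit form \eqref{eqn:defofQ} of $Q^\varepsilon$ to rewrite the two $Q^\varepsilon$-integrals as iterated Lebesgue integrals. Because the $(\eta,z)$-marginal of $Q^\varepsilon(d\mathbf{v})$ conditioned on $s$ is supported on the curve $u\mapsto(\eta^\varepsilon(u),\psi^\varepsilon(u))$, and the $y$-coordinate is $\bar Y^\varepsilon(u)$,
$$
\int_{\mathbb{H}_t}b(\bar X^\varepsilon(s),y)\,Q^\varepsilon(d\mathbf{v})
=\int_0^t\frac{1}{\Delta_\varepsilon}\int_s^{s+\Delta_\varepsilon}b(\bar X^\varepsilon(s),\bar Y^\varepsilon(u))\,du\,ds,
$$
and similarly
$$
\int_{\mathbb{H}_t}a(\bar X^\varepsilon(s),y)z\,Q^\varepsilon(d\mathbf{v})
=\int_0^t\frac{1}{\Delta_\varepsilon}\int_s^{s+\Delta_\varepsilon}a(\bar X^\varepsilon(s),\bar Y^\varepsilon(u))\psi^\varepsilon(u)\,du\,ds.
$$
The boundary effect from replacing $s+\Delta_\varepsilon$ by $(s+\Delta_\varepsilon)\wedge T$ in these expressions is supported on the set $s\in(T-\Delta_\varepsilon,T]$; by the conventions imposed just before \eqref{eq:eq1347} ($\bar Y^\varepsilon(u)=\bar Y^\varepsilon(T)$ and $\psi^\varepsilon(u)=0$ for $u\ge T$), linear growth of $a,b$ (Remark \ref{assmp_lingrowth}), Cauchy–Schwarz with $L_T(u^\varepsilon)\le M$, and the $L^2$-bound of Lemma \ref{L2bd}, this correction is $O(\Delta_\varepsilon)$ in $L^2$ uniformly in $t$, and hence vanishes as $\varepsilon\to 0$ by \eqref{eq:eq501}.

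Finally I would invoke Lemma \ref{lem:diffest} directly: \eqref{sup1} and \eqref{sup2} say that the truncated averaged integrals differ from $\int_0^t b(\bar X^\varepsilon(s),\bar Y^\varepsilon(s))\,ds$ and $\int_0^t a(\bar X^\varepsilon(s),\bar Y^\varepsilon(s))\psi^\varepsilon(s)\,ds$ by quantities whose $L^2$-norms tend to zero uniformly in $t\in[0,T]$. Combining the three vanishing contributions (Brownian, boundary correction, and Lemma \ref{lem:diffest}) via the triangle inequality in $L^2$ yields $\sup_{0\le t\le T}\mathbb{E}\|\bar Z^\varepsilon(t)\|^2\to 0$. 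No step looks delicate once the identifications are made; the only mild care needed is the bookkeeping for the endpoint correction $(s+\Delta_\varepsilon)\wedge T$ versus $s+\Delta_\varepsilon$, which is where the conventions extending $\bar Y^\varepsilon$, $\psi^\varepsilon$, $\varphi^\varepsilon$ past time $T$ are used.
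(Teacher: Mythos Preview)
Your proposal is correct and follows essentially the same route as the paper: decompose $\bar Z^\varepsilon$ via the controlled equation \eqref{fastslow_ctrl}, identify the $Q^\varepsilon$-integrals with the $\Delta_\varepsilon$-averaged integrals appearing in Lemma \ref{lem:diffest}, kill the Brownian term via \eqref{Error_term} and Lemma \ref{L2bd}, and conclude with \eqref{sup1}--\eqref{sup2}. If anything you are slightly more careful than the paper, which passes silently from the upper limit $s+\Delta_\varepsilon$ in \eqref{eqn:defofQ} to $(s+\Delta_\varepsilon)\wedge T$, whereas you explicitly bound that boundary correction using the post-$T$ conventions.
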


\begin{proof}
From (\ref{eqn:decompX}), $\bar{Z}^{\varepsilon}(t)$ can be written as
\begin{align*}
\bar{Z}^{\varepsilon}(t)  &  =\bar{\mathcal{E}}^{\varepsilon}(t)+\int_{0}%
^{t}b(\bar{X}^{\varepsilon}(s),\bar{Y}^{\varepsilon}(s))ds-\int_{\mathbb{H}%
_{t}}b(\bar{X}^{\varepsilon}(s),y)Q^{\varepsilon}(d\mathbf{v})\\
&  \hspace*{0.1cm}\quad+\int_{0}^{t}a(\bar{X}^{\varepsilon}(s),\bar
{Y}^{\varepsilon}(s))\psi^{\varepsilon}(s)ds-\int_{\mathbb{H}_{t}}a(\bar
{X}^{\varepsilon}(s),y)zQ^{\varepsilon}(d\mathbf{v})\\
&  =\bar{\mathcal{E}}^{\varepsilon}(t)+\int_{0}^{t}b(\bar{X}^{\varepsilon
}(s),\bar{Y}^{\varepsilon}(s))ds-\int_{0}^{t}\frac{1}{\Delta_{\varepsilon}%
}\int_{s}^{(s+\Delta_{\varepsilon})\wedge T}b(\bar{X}^{\varepsilon}(s),\bar
{Y}^{\varepsilon}(u))duds\\
&  \quad+\hspace*{0.1cm}\int_{0}^{t}a(\bar{X}^{\varepsilon}(s),\bar
{Y}^{\varepsilon}(s))\psi^{\varepsilon}(s)ds-\int_{0}^{t}\frac{1}%
{\Delta_{\varepsilon}}\int_{s}^{(s+\Delta_{\varepsilon})\wedge T}a(\bar
{X}^{\varepsilon}(s),\bar{Y}^{\varepsilon}(u))\psi^{\varepsilon}(u)duds,
\end{align*}
where $\bar{\mathcal{E}}^{\varepsilon}$ is as in the proof of Lemma
\ref{L2bd}. As argued in the proof of Proposition \ref{ctrlproc_tight},
$\mathbb{E}\left(  \sup_{r\leq t}\Vert\bar{\mathcal{E}}^{\varepsilon}%
(r)\Vert^{2}\right)  \rightarrow0$. The result now follows from Lemma
\ref{lem:diffest}.
\end{proof}

We now give a convenient lower bound for ${L}_{T}(u^{\varepsilon})$ for
$u^{\varepsilon} \in\mathcal{U}_{b}$.

\begin{lemma}
\label{Lem:ineqrate} For $\varepsilon\in(0,1)$, let $u^{\varepsilon}%
=(\psi^{\varepsilon},\varphi^{\varepsilon})\in\mathcal{U}_{b}$ and define
$Q^{\varepsilon}$ as in (\ref{eqn:defofQ}). Then
\[
{L}_{T}(u^{\varepsilon})\geq\int_{\mathbb{H}_{T}}\left[  \frac{1}{2}\Vert
z\Vert^{2}+\hat{\ell}(\eta)\right]  Q^{\varepsilon}(d\mathbf{v}).
\]

\end{lemma}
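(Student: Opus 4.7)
The plan is to bound the two pieces $\int_{\mathbb{H}_T}\tfrac{1}{2}\|z\|^{2}\,Q^{\varepsilon}(d\mathbf{v})$ and $\int_{\mathbb{H}_T}\hat{\ell}(\eta)\,Q^{\varepsilon}(d\mathbf{v})$ separately by $\tilde{L}_T(\psi^{\varepsilon})$ and $\bar{L}_T(\varphi^{\varepsilon})$ respectively; both bounds reduce to a Fubini calculation exploiting the sliding-window structure of $Q^{\varepsilon}$.

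First I would unpack $Q^{\varepsilon}$ via its definition (\ref{eqn:defofQ}). For the Brownian cost, integrating $\tfrac{1}{2}\|z\|^{2}$ against $Q^{\varepsilon}$ gives
\[
\int_{\mathbb{H}_T}\tfrac{1}{2}\|z\|^{2}\,Q^{\varepsilon}(d\mathbf{v})
=\int_{0}^{T}\frac{1}{\Delta_{\varepsilon}}\int_{s}^{s+\Delta_{\varepsilon}}\tfrac{1}{2}\|\psi^{\varepsilon}(u)\|^{2}\,du\,ds.
\]
Swapping the order of integration, the inner Lebesgue set $\{s\in[0,T]:s\le u\le s+\Delta_{\varepsilon}\}$ has measure at most $\Delta_{\varepsilon}$ for every $u$, and $\psi^{\varepsilon}\equiv 0$ on $[T,T+\Delta_{\varepsilon}]$ by the convention recorded before (\ref{eq:eq1347}). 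So the right side is bounded above by $\tfrac{1}{2}\int_{0}^{T}\|\psi^{\varepsilon}(u)\|^{2}\,du=\tilde{L}_T(\psi^{\varepsilon})$.

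For the jump cost, the key observation is that $\eta^{\varepsilon}(u)$, as defined in (\ref{eq:eq1347}), has density $r\mapsto\varphi_{\bar{Y}^{\varepsilon}(u-),j}^{\varepsilon}(u,r)$ in its $j$th component, so
\[
\hat{\ell}(\eta^{\varepsilon}(u))=\sum_{j\in\mathbb{L}}\int_{[0,\zeta]}\ell\bigl(\varphi_{\bar{Y}^{\varepsilon}(u-),j}^{\varepsilon}(u,r)\bigr)\,\lambda_{\zeta}(dr).
\]
Using $\ell(1)=0$ together with the conventions $\varphi^{\varepsilon}_{ij}\equiv 1$ whenever $(i,j)\notin\mathbb{T}$, whenever $i\ne\bar{Y}^{\varepsilon}(u-)$ (see (\ref{eq:eq752-17})), and whenever $u\in[T,T+\Delta_{\varepsilon}]$, the integrand only contributes on $(i,j)\in\mathbb{T}$ with $i=\bar{Y}^{\varepsilon}(u-)$. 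Since $\ell\ge 0$, dropping the indicator $1_{\{\bar{Y}^{\varepsilon}(u-)=i\}}$ gives
\[
\hat{\ell}(\eta^{\varepsilon}(u))\le\sum_{(i,j)\in\mathbb{T}}\int_{[0,\zeta]}\ell\bigl(\varphi_{ij}^{\varepsilon}(u,r)\bigr)\,\lambda_{\zeta}(dr),
\]
with the right side vanishing on $[T,T+\Delta_{\varepsilon}]$. Then the same Fubini/swap-order argument as above applied to
\[
\int_{\mathbb{H}_T}\hat{\ell}(\eta)\,Q^{\varepsilon}(d\mathbf{v})=\int_{0}^{T}\frac{1}{\Delta_{\varepsilon}}\int_{s}^{s+\Delta_{\varepsilon}}\hat{\ell}(\eta^{\varepsilon}(u))\,du\,ds
\]
yields the bound $\int_{0}^{T}\hat{\ell}(\eta^{\varepsilon}(u))\,du\le\bar{L}_T(\varphi^{\varepsilon})$.

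Adding the two inequalities and recalling (\ref{eqn:sumofcosts}) gives the claim. There is no substantive obstacle here: once one identifies $\hat{\ell}(\eta^{\varepsilon}(u))$ explicitly from the definition of $\eta^{\varepsilon}$, everything reduces to Fubini and the nonnegativity of $\ell$; the sliding-window averaging over $[s,s+\Delta_{\varepsilon}]$ is a contraction in the sense that it cannot increase the integral of a convex, nonnegative cost against Lebesgue measure on $[0,T]$, provided the extension past $T$ is chosen so the cost vanishes there, which is arranged by the stated conventions.
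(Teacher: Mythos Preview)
Your proof is correct and follows essentially the same approach as the paper: both reduce each of the two cost terms to a Fubini calculation exploiting the sliding-window structure of $Q^{\varepsilon}$ together with the conventions that $\psi^{\varepsilon}$, $\varphi^{\varepsilon}$ are trivially extended past $T$ and that $\varphi^{\varepsilon}_{ij}\equiv 1$ off the active state. The only cosmetic difference is that the paper computes the exact nonnegative remainder $\int_{0}^{\Delta_{\varepsilon}}\frac{1}{\Delta_{\varepsilon}}\int_{u}^{\Delta_{\varepsilon}}(\cdot)\,ds\,du$ rather than invoking the ``measure $\le\Delta_{\varepsilon}$'' bound, and it obtains the identity $\int_{0}^{T}\hat{\ell}(\eta^{\varepsilon}(s))\,ds=\bar{L}_{T}(\varphi^{\varepsilon})$ (using \eqref{eq:eq752-17}) where you state only the inequality.
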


\begin{proof}
Recall from \eqref{eqn:sumofcosts} that
\[
{L}_{T}(u^{\varepsilon})=\frac{1}{2}\int_{0}^{T}\Vert\psi^{\varepsilon
}(s)\Vert^{2}ds+\sum_{(i,j)\in\mathbb{T}}\int_{[0,T]\times\lbrack0,\zeta]}%
\ell(\varphi_{ij}^{\varepsilon}(r,s))\lambda_{\zeta}(dr)ds,
\]
and using the convention for the definition of $\psi^{\varepsilon}(u)$ and
$\eta^{\varepsilon}(u)$ when $u>T$,
\[
\int_{\mathbb{H}_{T}}\Vert z\Vert^{2}Q^{\varepsilon}(d\mathbf{v})=\int_{0}%
^{T}\frac{1}{\Delta_{\varepsilon}}\int_{s}^{s+\Delta_{\varepsilon}}\Vert
\psi^{\varepsilon}(u)\Vert^{2}duds,\;\int_{\mathbb{H}_{T}}\hat{\ell}%
(\eta)Q^{\varepsilon}(d\mathbf{v})=\int_{0}^{T}\frac{1}{\Delta_{\varepsilon}%
}\int_{s}^{s+\Delta_{\varepsilon}}\hat{\ell}(\eta^{\varepsilon}(u))duds,
\]
where $\eta^{\varepsilon}$ is as in \eqref{eq:eq1347}. As in the proof of
\eqref{sup2}, changing the order of integration, we can rewrite the first term
in the last display as
\[
\int_{0}^{\Delta_{\varepsilon}}\frac{1}{\Delta_{\varepsilon}}\int_{0}^{u}%
\Vert\psi^{\varepsilon}(u)\Vert^{2}dsdu+\int_{\Delta_{\varepsilon}}^{T}%
\frac{1}{\Delta_{\varepsilon}}\int_{u-\Delta_{\varepsilon}}^{u}\Vert
\psi^{\varepsilon}(u)\Vert^{2}dsdu+\int_{T}^{T+\Delta_{\varepsilon}}\frac
{1}{\Delta_{\varepsilon}}\int_{u-\Delta_{\varepsilon}}^{T}\Vert\psi
^{\varepsilon}(u)\Vert^{2}dsdu,
\]
where the third term is 0 using our convention that $\psi^{\varepsilon}(u)=0$,
for $u>T$. We can write
\[
\int_{0}^{T}\Vert\psi^{\varepsilon}(s)\Vert^{2}ds=\int_{0}^{T}\frac{1}%
{\Delta_{\varepsilon}}\int_{u-\Delta_{\varepsilon}}^{u}\Vert\psi^{\varepsilon
}(u)\Vert^{2}dsdu,
\]
and thus
\begin{align*}
&  \int_{0}^{T}\Vert\psi^{\varepsilon}(s)\Vert^{2}ds-\int_{\mathbb{H}_{T}%
}\Vert z\Vert^{2}Q^{\varepsilon}(d\mathbf{v})\\
&  \quad=\int_{0}^{\Delta_{\varepsilon}}\frac{1}{\Delta_{\varepsilon}}\int
_{0}^{\Delta_{\varepsilon}}\Vert\psi^{\varepsilon}(u)\Vert^{2}dsdu-\int
_{0}^{\Delta_{\varepsilon}}\frac{1}{\Delta_{\varepsilon}}\int_{0}^{u}\Vert
\psi^{\varepsilon}(u)\Vert^{2}dsdu\\
&  \quad=\int_{0}^{\Delta_{\varepsilon}}\frac{1}{\Delta_{\varepsilon}}\int
_{u}^{\Delta_{\varepsilon}}\Vert\psi^{\varepsilon}(u)\Vert^{2}dsdu\geq0.
\end{align*}
For the second term note from \eqref{eq:eq752-17} and \eqref{eq:eq1347} that
\begin{align*}
\sum_{(i,j)\in\mathbb{T}}\int_{[0,T]\times\lbrack0,\zeta]}\ell(\varphi
_{ij}^{\varepsilon}(r,s))\lambda_{\zeta}(dr)ds  &  =\sum_{i\in\mathbb{L}}%
\int_{[0,T]\times\lbrack0,\zeta]}1_{\{Y^{\varepsilon}(s)=i\}}\ell(\varphi
_{ij}^{\varepsilon}(r,s))\lambda_{\zeta}(dr)ds\\
&  =\int_{[0,T]}\hat{\ell}(\eta^{\varepsilon}(s))ds,
\end{align*}
and we have in a similar manner that
\[
\int_{\lbrack0,T]}\hat{\ell}(\eta^{\varepsilon}(s))ds-\int_{\mathbb{H}_{T}%
}\hat{\ell}(\eta)Q^{\varepsilon}(d\mathbf{v})=\int_{0}^{\Delta_{\varepsilon}%
}\frac{1}{\Delta_{\varepsilon}}\int_{u}^{\Delta_{\varepsilon}}\hat{\ell}%
(\eta^{\varepsilon}(u))dsdu\geq0.
\]
The result follows.
\end{proof}

We now prove the tightness of $\{(\bar X^{\varepsilon}, Q^{\varepsilon})\}$
and characterize the limit points.

\begin{proposition}
\label{ctrlmeas_tight} Fix $M \in(0,\infty)$. Define $Q^{\varepsilon}$ as in
\eqref{eqn:defofQ}. Suppose for $\varepsilon\in(0,1)$, $u^{\varepsilon}%
=(\psi^{\varepsilon}, \varphi^{\varepsilon}) \in\mathcal{U}_{b}$ is such that
$L_{T}(u^{\varepsilon}) \le M$. Then $\{(\bar X^{\varepsilon}, Q^{\varepsilon
})\}$ is a tight family of $C([0,T]:\mathbb{R}^{d})\times\mathcal{M}%
_{F}(\mathbb{H}_{T})$-valued random variables. Furthermore, if $(\xi, Q)$ is a
weak limit point of $\{(\bar{X}^{\varepsilon},Q^{\varepsilon})\}$, then

\begin{enumerate}
\item $M\geq\int_{\mathbb{H}_{T}}\left[  \frac{1}{2}\Vert z\Vert^{2}+\hat
{\ell}(\eta)\right]  Q(d\mathbf{v});$

\item Equations \eqref{xi} and \eqref{inv} hold a.s.
\end{enumerate}
\end{proposition}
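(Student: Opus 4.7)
The overall approach mirrors the proof of Proposition \ref{prop611}, with one genuinely new ingredient: a martingale identity for the fast process $\bar Y^\varepsilon$ to extract the invariance relation (\ref{inv}).

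Tightness of $\bar X^\varepsilon$ follows from Proposition \ref{ctrlproc_tight}. For $Q^\varepsilon$, combining Lemma \ref{Lem:ineqrate} with $L_T(u^\varepsilon)\le M$ yields $\mathbb E\int_{\mathbb H_T}\|z\|^2\,dQ^\varepsilon\le 2M$ and $\mathbb E\int_{\mathbb H_T}\hat\ell(\eta)\,dQ^\varepsilon\le M$; inequality (\ref{eq:eq1824}) with $\sigma=u=1$ then controls $\mathbb E\int_{\mathbb H_T}\sum_j\eta_j([0,\zeta])\,dQ^\varepsilon$. Since $[Q^\varepsilon]_1=\lambda_T$ and $\mathbb L$ is finite, the argument behind (\ref{eq:ratetight}) gives tightness in $\mathcal M_F(\mathbb H_T)$. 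Part (1) is then Fatou plus the joint lower semicontinuity of $(\eta,z)\mapsto\tfrac12\|z\|^2+\hat\ell(\eta)$. For the state equation (\ref{xi}), Lemma \ref{lem:lem748} gives $\sup_{t\le T}\mathbb E\|\bar Z^\varepsilon(t)\|^2\to 0$; after Skorohod embedding rendering $(\bar X^\varepsilon,Q^\varepsilon)\to(\xi,Q)$ almost sure, the Lipschitz continuity of $a,b$ together with the $\|z\|^2$ bound and Cauchy--Schwarz transfer the drift and diffusion integrals to the limit, exactly as in part (b) of the proof of Proposition \ref{prop611}.

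The main new work is (\ref{inv}). For fixed $j\in\mathbb L$ set $f_j(y)=1_{\{y=j\}}$. Applying Dynkin's formula to $f_j(\bar Y^\varepsilon)$ in (\ref{fastslow_ctrl}), and using $A^\eta_{ii}=-\sum_{k\neq i}A^\eta_{ik}$ to merge both cases, produces
\begin{equation*}
\int_0^t A^{\eta^\varepsilon(s)}_{\bar Y^\varepsilon(s-),j}(\bar X^\varepsilon(s))\,ds=\varepsilon\bigl(f_j(\bar Y^\varepsilon(t))-f_j(y_0)\bigr)-\varepsilon M^\varepsilon_j(t),
\end{equation*}
where $M^\varepsilon_j$ is a mean-zero martingale with $\mathbb E\langle M^\varepsilon_j\rangle_t\le\varepsilon^{-1}\mathbb E\int_0^t\sum_k\eta^\varepsilon_k(s)([0,\zeta])\,ds$. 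Using convention (\ref{eq:eq752-17}) one has $\int_0^T\hat\ell(\eta^\varepsilon(s))\,ds=\bar L_T(\varphi^\varepsilon)\le M$, and (\ref{eq:eq1824}) then bounds the right-hand side by $\varepsilon^{-1}C(M)$, so $\mathbb E|\varepsilon M^\varepsilon_j(t)|=O(\sqrt\varepsilon)$; since $|f_j|\le 1$, the left-hand side converges to $0$ in $L^1$.

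It then remains to identify this integral with $\int_{\mathbb H_t}A^\eta_{yj}(\xi(s))\,Q^\varepsilon(d\mathbf v)$ and pass to the limit. A Fubini-type exchange of integration as in Lemma \ref{lem:diffest}, combined with Lemma \ref{Gen_ineq}, uniform continuity of $\xi$ on $[0,T]$, and the $\hat\ell$-bound, allows me to replace the time-averaging window defining $Q^\varepsilon$ and swap $\xi(s)$ for $\bar X^\varepsilon(s)$ with vanishing $L^1$ error along a Skorohod subsequence; this gives $\int_{\mathbb H_t}A^\eta_{yj}(\xi(s))\,Q^\varepsilon(d\mathbf v)\to 0$ in $L^1$. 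Finally, passage from $Q^\varepsilon$ to $Q$ mirrors (\ref{eqn:Abounds})--(\ref{E.1}) in the proof of Proposition \ref{prop611}: the Lipschitz bound from Lemma \ref{Gen_ineq} combined with Skorohod representation and Lemma \ref{lem:genQ}, which exploits absolute continuity of the limiting $\bar\eta$ (guaranteed by finite $\hat\ell$), yields the desired convergence. This last step is the principal technical obstacle, since $\eta\mapsto\eta_j(E_{yj}(\xi(s)))$ is not weakly continuous in $\eta$ and only becomes manageable in the limit once absolute continuity is available.
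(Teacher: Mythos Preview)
Your proposal is correct and follows essentially the same route as the paper: tightness via Lemma \ref{Lem:ineqrate} and the argument behind \eqref{eq:ratetight}; part 1 via Fatou and lower semicontinuity; equation \eqref{xi} via Lemma \ref{lem:lem748} plus the weak convergence argument from Proposition \ref{prop611}; and equation \eqref{inv} via the martingale identity for $1_{\{j\}}(\bar Y^\varepsilon)$, a Fubini-type comparison with the time-averaged integral using Lemma \ref{Gen_ineq}, and the passage $Q^\varepsilon\to Q$ as in \eqref{eqn:Abounds}--\eqref{E.1}. The only cosmetic difference is that the paper keeps $\bar X^\varepsilon$ in the argument of $A^\eta$ until the very end (using equicontinuity of $\{\bar X^\varepsilon\}$ rather than uniform continuity of $\xi$) and swaps to $\xi$ only in the final step, whereas you substitute $\xi$ one step earlier; this is a harmless reordering of the same estimates.
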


\begin{proof}
Tightness of $\{\bar{X}^{\varepsilon}\}$ was shown in Proposition
\ref{ctrlproc_tight}. Next we argue the tightness of $\{Q^{\varepsilon}\}$.
From Lemma \ref{Lem:ineqrate} we have
\begin{equation}
\int_{\mathbb{H}_{T}}\left[  \frac{1}{2}\Vert z\Vert^{2}+\hat{\ell}%
(\eta)\right]  Q^{\varepsilon}(d\mathbf{v})\leq M. \label{eq:eq737}%
\end{equation}
To prove the tightness of $\{Q^{\varepsilon}\}$, it suffices to show that for
any $\delta\in(0,\infty)$, there exists $C_{1}\in(0,\infty)$ such that:
\[
\sup_{\varepsilon}\mathbb{E}Q^{\varepsilon}\left\{  (s,y,\eta,z)\in
\mathbb{H}_{T}:\;\sum_{j\in\mathbb{L}}\eta_{j}[0,\zeta]+\Vert z\Vert
>C_{1}\right\}  \leq\delta.
\]
However, this is proved exactly as \eqref{eq:ratetight} using \eqref{eq:eq737}
instead of \eqref{bd1}.
The inequality in part 1 follows immediately from Lemma \ref{Lem:ineqrate}
using Fatou's Lemma and lower semicontinuity of $\eta\mapsto\hat{\ell}(\eta)$.
We now prove 2. For this we assume without loss of generality (using the
Skorokhod representation) that $(\bar{X}^{\varepsilon},Q^{\varepsilon})$
converges a.s. to $(\xi,Q)$. Following similar steps as in the proof of
Proposition \ref{prop611} (see the proof of \eqref{eq:eq1139}), we conclude
that
\[
\int_{\mathbb{H}_{T}}[b(\bar{X}^{\varepsilon}(s),y)+a(\bar{X}^{\varepsilon
}(s),y)z]Q^{\varepsilon}(d\mathbf{v})\rightarrow\int_{\mathbb{H}_{T}}%
[b(\xi(s),y)+a(\xi(s),y)z]Q(d\mathbf{v}).
\]
It now follows from \eqref{eq:qe} and Lemma \ref{lem:lem748} that \eqref{xi}
holds. To prove that \eqref{inv} holds, we estimate the difference between
$\int_{\mathbb{H}_{t}}A_{y,j}^{\eta}(\bar{X}^{\varepsilon}(s))Q^{\varepsilon
}(d\mathbf{v})$ and $\int_{0}^{t}A_{\bar{Y}^{\varepsilon}(u),j}^{\eta
^{\varepsilon}(u)}(\bar{X}^{\varepsilon}(u))du$ for $j\in\mathbb{L}$ and
$t\in\lbrack0,T]$. By a change of the order of integration, we have for
$t\in\lbrack\Delta_{\varepsilon},T]$
\begin{align*}
\int_{\mathbb{H}_{t}}A_{y,j}^{\eta}(\bar{X}^{\varepsilon}(s))Q^{\varepsilon
}(d\mathbf{v})  &  =\int_{0}^{t}\frac{1}{\Delta_{\varepsilon}}\int
_{s}^{s+\Delta_{\varepsilon}}A_{\bar{Y}^{\varepsilon}(u),j}^{\eta
^{\varepsilon}(u)}(\bar{X}^{\varepsilon}(s))duds\\
&  =\int_{0}^{\Delta_{\varepsilon}}\frac{1}{\Delta_{\varepsilon}}\int_{0}%
^{u}A_{\bar{Y}^{\varepsilon}(u),j}^{\eta^{\varepsilon}(u)}(\bar{X}%
^{\varepsilon}(s))dsdu\\
&  \quad+\int_{\Delta_{\varepsilon}}^{t}\frac{1}{\Delta_{\varepsilon}}%
\int_{u-\Delta_{\varepsilon}}^{u}A_{\bar{Y}^{\varepsilon}(u),j}^{\eta
^{\varepsilon}(u)}(\bar{X}^{\varepsilon}(s))dsdu\\
&  \quad+\int_{t}^{t+\Delta_{\varepsilon}}\frac{1}{\Delta_{\varepsilon}}%
\int_{u-\Delta_{\varepsilon}}^{t}A_{\bar{Y}^{\varepsilon}(u),j}^{\eta
^{\varepsilon}(u)}(\bar{X}^{\varepsilon}(s))dsdu.
\end{align*}
Write%
\[
\int_{0}^{t}A_{\bar{Y}^{\varepsilon}(u),j}^{\eta^{\varepsilon}(u)}(\bar
{X}^{\varepsilon}(u))du=\int_{0}^{t}\frac{1}{(\Delta_{\varepsilon})\wedge
u}\int_{(u-\Delta_{\varepsilon})_{+}}^{u}A_{\bar{Y}^{\varepsilon}(u),j}%
^{\eta^{\varepsilon}(u)}(\bar{X}^{\varepsilon}(u))dsdu.
\]
Using this along with Lemma \ref{Gen_ineq} and Remark \ref{rem:rem1017} we
have that there exists $C_{2}\in(0,\infty)$ such that, for any $M_{0}%
\in\lbrack1,\infty)$, $t\in\lbrack\Delta_{\varepsilon},T]$ and $\varepsilon
\in(0,1)$,
\begin{align*}
&  \left\vert \int_{\mathbb{H}_{t}}A_{y,j}^{\eta}(\bar{X}^{\varepsilon
}(s))Q^{\varepsilon}(d\mathbf{v})-\int_{0}^{t}A_{\bar{Y}^{\varepsilon}%
(u),j}^{\eta^{\varepsilon}(u)}(\bar{X}^{\varepsilon}(u))du\right\vert \\
&  \quad\leq\left\vert \int_{0}^{\Delta_{\varepsilon}}\frac{1}{\Delta
_{\varepsilon}}\int_{0}^{u}A_{\bar{Y}^{\varepsilon}(u),j}^{\eta^{\varepsilon
}(u)}(\bar{X}^{\varepsilon}(s))dsdu\right\vert +\left\vert \int_{0}%
^{\Delta_{\varepsilon}}\frac{1}{u}\int_{0}^{u}A_{\bar{Y}^{\varepsilon}%
(u),j}^{\eta^{\varepsilon}(u)}(\bar{X}^{\varepsilon}(u))dsdu\right\vert \\
&  \quad\quad+\int_{\Delta_{\varepsilon}}^{t}\frac{1}{\Delta_{\varepsilon}%
}\int_{u-\Delta_{\varepsilon}}^{u}\left\vert A_{\bar{Y}^{\varepsilon}%
(u),j}^{\eta^{\varepsilon}(u)}(\bar{X}^{\varepsilon}(s))-A_{\bar
{Y}^{\varepsilon}(u),j}^{\eta^{\varepsilon}(u)}(\bar{X}^{\varepsilon
}(u))\right\vert dsdu\\
&  \quad\quad+\left\vert \int_{t}^{t+\Delta_{\varepsilon}}\frac{1}%
{\Delta_{\varepsilon}}\int_{u-\Delta_{\varepsilon}}^{t}A_{\bar{Y}%
^{\varepsilon}(u),j}^{\eta^{\varepsilon}(u)}(\bar{X}^{\varepsilon
}(s))dsdu\right\vert \\
&  \quad\leq C_{2}e^{M_{0}}|\Delta_{\varepsilon}|+C_{2}e^{M_{0}}%
\sup_{|s-s^{\prime}|\leq\Delta_{\varepsilon},s,s^{\prime}\in\lbrack0,T]}%
\Vert\bar{X}^{\varepsilon}(s)-\bar{X}^{\varepsilon}(s^{\prime})\Vert
+\frac{C_{2}}{M_{0}}\int_{0}^{T}\hat{\ell}(\eta^{\varepsilon}(u))du\\
&  \quad\leq C_{2}e^{M_{0}}|\Delta_{\varepsilon}|+C_{2}e^{M_{0}}%
\sup_{|s-s^{\prime}|\leq\Delta_{\varepsilon},s,s^{\prime}\in\lbrack0,T]}%
\Vert\bar{X}^{\varepsilon}(s)-\bar{X}^{\varepsilon}(s^{\prime})\Vert
+\frac{C_{2}M}{M_{0}}.
\end{align*}
A similar calculation shows that the above inequality is also true for all
$t\in\lbrack0,\Delta_{\varepsilon}]$. Thus
\begin{align}
&  \limsup_{\varepsilon\rightarrow0}\sup_{t\in\lbrack0,T]}\left\vert
\int_{\mathbb{H}_{t}}A_{y,j}^{\eta}(\bar{X}^{\varepsilon}(s))Q^{\varepsilon
}(d\mathbf{v})-\int_{0}^{t}A_{\bar{Y}^{\varepsilon}(u),j}^{\eta^{\varepsilon
}(u)}(\bar{X}^{\varepsilon}(u))du\right\vert \nonumber\\
&  \quad\leq\limsup_{M_{0}\rightarrow\infty}\left[  C_{2}e^{M_{0}}%
\limsup_{\varepsilon\rightarrow0}\sup_{|s-s^{\prime}|\leq\Delta_{\varepsilon
},s,s^{\prime}\in\lbrack0,T]}\sup_{\bar{\varepsilon}\in(0,1)}\Vert\bar
{X}^{\bar{\varepsilon}}(s)-\bar{X}^{\bar{\varepsilon}}(s^{\prime})\Vert
+\frac{C_{2}M}{M_{0}}\right] \nonumber\\
&  \quad\leq\limsup_{M_{0}\rightarrow\infty}\frac{C_{2}M}{M_{0}}=0,
\label{eq:ab1202}%
\end{align}
where the second inequality follows on noting that since $\bar{X}%
^{\varepsilon}\rightarrow\xi$, the collection $\{\bar{X}^{\varepsilon}\}$ is
equicontinuous. Recall the sets $E_{ij}(x)$ defined in (\ref{eqn:defEij}).
Then from \eqref{eq:eq752-17} and \eqref{eq:eq1347}, for any $\phi$ mapping
$\mathbb{L}$ to $\mathbb{R}$
\begin{align}
&  \phi(\bar{Y}^{\varepsilon}(t))-\phi(y_{0})\nonumber\\
&  \quad=\sum_{(i,j)\in\mathbb{T}}(\phi(j)-\phi(i))\int_{[0,\zeta
]\times\lbrack0,t]}1_{\{\bar{Y}^{\varepsilon}(s-)=i\}}1_{E_{ij}(\bar
{X}^{\varepsilon}(s))}(r)N_{ij}^{{\varepsilon}^{-1}\varphi_{ij}^{\varepsilon}%
}(dr\times ds)\nonumber\\
&  \quad={\varepsilon}^{-1}\sum_{(i,j)\in\mathbb{T}}(\phi(j)-\phi
(i))\int_{[0,\zeta]\times\lbrack0,t]}1_{\{\bar{Y}^{\varepsilon}(s)=i\}}%
1_{E_{ij}(\bar{X}^{\varepsilon}(s))}(r)\varphi_{ij}^{\varepsilon}%
(r,s)\lambda_{\zeta}(dr)ds+\bar{M}_{\phi}^{\varepsilon}(t)\nonumber\\
&  \quad={\varepsilon}^{-1}\sum_{(i,j)\in\mathbb{T}}(\phi(j)-\phi(i))\int
_{0}^{t}1_{\{\bar{Y}^{\varepsilon}(s)=i\}}\eta_{j}^{\varepsilon}(E_{ij}%
(\bar{X}^{\varepsilon}(s)))ds+\bar{M}_{\phi}^{\varepsilon}(t),
\label{Ito_form}%
\end{align}
where $\bar{M}_{\phi}^{\varepsilon}$ is the martingale given by
\[
\bar{M}_{\phi}^{\varepsilon}(t)\doteq\sum_{(i,j)\in\mathbb{T}}(\phi
(j)-\phi(i))\int_{[0,\zeta]\times\lbrack0,t]}1_{\{\bar{Y}^{\varepsilon
}(s-)=i\}}1_{E_{ij}(\bar{X}^{\varepsilon}(s))}(r)\tilde{N}_{ij}^{{\varepsilon
}^{-1}\varphi_{ij}^{\varepsilon}}(dr\times ds)
\]
and $\tilde{N}_{ij}^{{\varepsilon}^{-1}\varphi_{ij}^{\varepsilon}}(dr\times
ds)=N_{ij}^{{\varepsilon}^{-1}\varphi_{ij}^{\varepsilon}}(dr\times
ds)-{\varepsilon}^{-1}\varphi_{ij}^{\varepsilon}(r,s)dr\,ds$. By Doob's
inequality
\begin{align*}
\mathbb{E}\sup_{0\leq s\leq T}|\varepsilon\bar{M}_{\phi}^{\varepsilon
}(s)|^{2}  &  \leq4\varepsilon\sum_{(i,j)\in\mathbb{T}}(\phi(j)-\phi
(i))^{2}\int_{[0,\zeta]\times\lbrack0,T]}\varphi_{ij}^{\varepsilon
}(r,s)\lambda_{\zeta}(dr)ds\\
&  \leq16\varepsilon\Vert\phi\Vert_{\infty}^{2}\sum_{(i,j)\in\mathbb{T}}%
\int_{[0,\zeta]\times\lbrack0,T]}\varphi_{ij}^{\varepsilon}(r,s)\lambda
_{\zeta}(dr)ds\\
&  \leq16\varepsilon\Vert\phi\Vert_{\infty}^{2}\left(  \zeta Te+M\right)  ,
\end{align*}
where the last inequality uses \eqref{eq:eq1824}. It follows that $\sup_{0\leq
s\leq T}|\varepsilon\bar{M}_{\phi}^{\varepsilon}(s)|$ converges to $0$ in
probability as $\varepsilon\rightarrow0$. Next, from \eqref{Ito_form} we see
that
\[
\varepsilon(\phi(\bar{Y}^{\varepsilon}(t))-\phi(y_{0}))=\sum_{(i,j)\in
\mathbb{T}}(\phi(j)-\phi(i))\int_{0}^{t}1_{\{\bar{Y}^{\varepsilon}(s)=i\}}%
\eta_{j}^{\varepsilon}(E_{ij}(\bar{X}^{\varepsilon}(s)))ds+\varepsilon\bar
{M}_{\phi}^{\varepsilon}(t),\;t\in\lbrack0,T].
\]
Since $\phi$ is bounded, we conclude that as $\varepsilon\rightarrow0$
\[
\sup_{0\leq t\leq T}\left\vert \sum_{(i,j)\in\mathbb{T}}(\phi(j)-\phi
(i))\int_{0}^{t}1_{\{\bar{Y}^{\varepsilon}(s)=i\}}\eta_{j}^{\varepsilon
}(E_{ij}(\bar{X}^{\varepsilon}(s)))ds\right\vert \rightarrow0.
\]
For fixed $j\in\mathbb{L}$, taking $\phi\doteq1_{\{j\}}$, we now see from
\eqref{eq:eq855mzr} that
\[
\sup_{0\leq t\leq T}\left\vert \sum_{i\in\mathbb{L}}\int_{0}^{t}1_{\{\bar
{Y}^{\varepsilon}(s)=i\}}\eta_{j}^{\varepsilon}(E_{ij}(\bar{X}^{\varepsilon
}(s)))ds\right\vert =\sup_{0\leq t\leq T}\left\vert \int_{0}^{t}A_{\bar
{Y}^{\varepsilon}(s),j}^{\eta^{\varepsilon}(s)}(\bar{X}^{\varepsilon
}(s))ds\right\vert
\]
converges to $0$ as $\varepsilon\rightarrow0$. Hence from \eqref{eq:ab1202},
$\int_{\mathbb{H}_{t}}A_{y,j}^{\eta}(\bar{X}^{\varepsilon}(s))Q^{\varepsilon
}(d\mathbf{v})\rightarrow0$, uniformly in $t\in\lbrack0,T]$. Now as in the
proof of Proposition \ref{prop611} (see \eqref{eqn:Abounds} and \eqref{E.1}),
$\int_{\mathbb{H}_{t}}A_{y,j}^{\eta}(\bar{X}^{\varepsilon}(s))Q^{\varepsilon
}(d\mathbf{v})\rightarrow\int_{\mathbb{H}_{t}}A_{y,j}^{\eta}(\xi
(s))Q(d\mathbf{v})$. Thus \eqref{inv} is satisfied and the result follows.
\end{proof}

We now prove the upper bound in \eqref{eq:uppbd}. Recall that for
$\varepsilon>0$, $(X^{\varepsilon},Y^{\varepsilon})$ is given as the unique
pathwise solution of \eqref{fastslow}.

\begin{theorem}
\label{thm:mainuppbd} For any $F \in C_{b}( C([0,T]:\mathbb{R}^{d}))$, the
inequality in \eqref{eq:uppbd} holds.
\end{theorem}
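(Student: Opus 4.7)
\textbf{Plan of proof for Theorem \ref{thm:mainuppbd}.} The strategy is to lower-bound the variational representation \eqref{VR22} using tightness and the limit characterization from Proposition \ref{ctrlmeas_tight}, together with the identity $\hat I=I$ from Proposition \ref{prop:ieqihat}.

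First, fix $F\in C_b(C([0,T]:{\mathbb R}^d))$ and let $\|F\|_\infty=K$. For each $\varepsilon\in(0,1)$ pick a near-optimal control $u^\varepsilon=(\psi^\varepsilon,\varphi^\varepsilon)\in\mathcal{U}_b$ in \eqref{VR22}, i.e.
\[
\mathbb{E}\bigl[L_T(u^\varepsilon)+F(\bar X^\varepsilon)\bigr]\le -\varepsilon\log\mathbb{E}\bigl[e^{-\varepsilon^{-1}F(X^\varepsilon)}\bigr]+\varepsilon.
\]
Since $\mathbb{E}[F(\bar X^\varepsilon)]\ge -K$ and the left-hand side of \eqref{VR22} is at most $K$, we may assume (without loss of generality, since otherwise we replace $u^\varepsilon$ by $0$, which only changes the cost by a bounded amount) that $L_T(u^\varepsilon)\le M$ a.s.\ for some $M\in(0,\infty)$ independent of $\varepsilon$. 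I would make this precise by truncating: if the infimum of $\mathbb{E}[L_T(u)+F(\bar X^\varepsilon)]$ exceeds $2K+1$, the asserted upper bound is trivial since the right-hand side of \eqref{eq:uppbd} is at most $K$; otherwise near-optimal controls can be chosen with $L_T(u^\varepsilon)\le 2K+2$ a.s.\ after a standard replacement on the event $\{L_T(u^\varepsilon)>2K+2\}$.

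Next, define the occupation measure $Q^\varepsilon$ as in \eqref{eqn:defofQ} using a mesh $\Delta_\varepsilon$ satisfying \eqref{eq:eq501}. By Proposition \ref{ctrlmeas_tight}, $\{(\bar X^\varepsilon,Q^\varepsilon)\}$ is tight on $C([0,T]:{\mathbb R}^d)\times \mathcal{M}_F(\mathbb{H}_T)$; moreover every weak limit point $(\xi,Q)$ satisfies $Q\in\hat{\mathcal A}(\xi)$ (that is, \eqref{xi} and \eqref{inv} hold) and the cost bound
\[
\int_{\mathbb{H}_T}\left[\tfrac12\|z\|^2+\hat\ell(\eta)\right]Q(d\mathbf{v})\le \liminf_{\varepsilon\to 0}\int_{\mathbb{H}_T}\left[\tfrac12\|z\|^2+\hat\ell(\eta)\right]Q^\varepsilon(d\mathbf{v})\le \liminf_{\varepsilon\to 0}L_T(u^\varepsilon),
\]
where the last inequality is Lemma \ref{Lem:ineqrate}. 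Consequently $\int_{\mathbb{H}_T}[\tfrac12\|z\|^2+\hat\ell(\eta)]Q(d\mathbf v)\ge \hat I(\xi)=I(\xi)$ by the definition \eqref{ratefn} and Proposition \ref{prop:ieqihat}.

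Finally, along any subsequence realizing the $\liminf$ of $-\varepsilon\log\mathbb{E}[\exp(-\varepsilon^{-1}F(X^\varepsilon))]$, extract a further subsequence along which $(\bar X^\varepsilon,Q^\varepsilon)\Rightarrow(\xi,Q)$. Using Fatou's lemma on the (nonnegative) cost $L_T(u^\varepsilon)$, bounded convergence for $F(\bar X^\varepsilon)\to F(\xi)$, and the bound above, we obtain
\[
\liminf_{\varepsilon\to 0}\mathbb{E}\bigl[L_T(u^\varepsilon)+F(\bar X^\varepsilon)\bigr]\ge \mathbb{E}\bigl[I(\xi)+F(\xi)\bigr]\ge \inf_{\eta\in C([0,T]:\mathbb{R}^d)}\{I(\eta)+F(\eta)\}.
\]
Combining with the near-optimality gives \eqref{eq:uppbd}. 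The main obstacle -- namely handling the degeneracy of the diffusion and the lack of regularity in the controlled jump rates when passing to the limit in \eqref{xi} and \eqref{inv} -- has already been overcome inside Proposition \ref{ctrlmeas_tight} through Lemma \ref{Gen_ineq} and Lemmas \ref{lem:diffest}--\ref{lem:lem748}; the present theorem only needs to assemble these pieces, verify the $M$-truncation of the cost, and invoke Proposition \ref{prop:ieqihat}.
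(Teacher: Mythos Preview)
Your proof is correct and follows essentially the same approach as the paper: near-optimal controls from \eqref{VR22}, a localization to obtain $L_T(u^\varepsilon)\le M$ a.s., tightness and limit identification via Proposition \ref{ctrlmeas_tight}, the cost inequality of Lemma \ref{Lem:ineqrate}, and the identity $\hat I=I$ from Proposition \ref{prop:ieqihat}. The only difference is cosmetic: the paper dispatches the a.s.\ cost truncation by citing the localization argument in \cite[Section A.3]{BDM11}, whereas you sketch it directly (your sketch is a bit imprecise, but the idea is the standard one).
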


\begin{proof}
Using \eqref{VR22} for every $\varepsilon>0$, we can find $(\psi^{\varepsilon
},\varphi^{\varepsilon})\in\mathcal{U}_{b}$ such that
\[
-\varepsilon\log\mathbb{E}\left[  \exp\left(  -\varepsilon^{-1}%
F(X^{\varepsilon})\right)  \right]  \geq{\mathbb{E}}\left[  \bar{L}_{T}%
(\psi^{\varepsilon},\varphi^{\varepsilon})+F(\bar{X}^{\varepsilon})\right]
-{\varepsilon},
\]
where $\bar{X}^{\varepsilon}$ solves \eqref{fastslow_ctrl} (with
$(\psi,\varphi)$ replaced with $(\psi^{\varepsilon},\varphi^{\varepsilon})$).
Since $F$ is bounded
\[
\sup_{\varepsilon\in(0,1)}\mathbb{E}\bar{L}_{T}(\psi^{\varepsilon}%
,\varphi^{\varepsilon})\leq2\left\Vert F\right\Vert _{\infty}+1<\infty,
\]
By a localization argument (see, e.g., \cite[Section A.3]{BDM11}) we can
assume without loss of generality that for some $M\in(0,\infty)$
\[
\sup_{\varepsilon\in(0,1)}\bar{L}_{T}(\psi^{\varepsilon},\varphi^{\varepsilon
})\leq M\mbox{ a.s.}.
\]
Now Proposition \ref{ctrlmeas_tight} implies that $(\bar{X}^{\varepsilon
},Q^{\varepsilon})$ is tight and any limit point $(\xi,Q)$ satisfies
\eqref{ab2013} -- \eqref{inv} a.s. and consequently $Q\in\hat{\mathcal{A}}%
(\xi)$ a.s., where $\hat{\mathcal{A}}(\xi)$ was introduced above
\eqref{ab2013}. Assume without loss of generality that the convergence to
$(\xi,Q)$ holds along the full sequence. Then
\begin{align*}
\liminf_{\varepsilon\rightarrow0}-\varepsilon\log\mathbb{E}_{x}\left[
\exp\left(  -\varepsilon^{-1}F(X^{\varepsilon})\right)  \right]   &
\geq\mathbb{E}\left[  \left(  \int_{\mathbb{H_{T}}}\left[  \frac{1}{2}\Vert
z\Vert^{2}+\hat{\ell}(\eta)\right]  Q(d\mathbf{v})\right)  +F(\xi)\right]  \\
&  \geq\mathbb{E}\left[  \inf_{Q\in\hat{\mathcal{A}}(\xi)}\left(
\int_{\mathbb{H_{T}}}\left[  \frac{1}{2}\Vert z\Vert^{2}+\hat{\ell}%
(\eta)\right]  Q(d\mathbf{v})\right)  +F(\xi)\right]  \\
&  =\mathbb{E}\left[  I(\xi)+F(\xi)\right]  \\
&  \geq\inf_{\xi\in C([0,T]:\mathbb{R}^{d})}\left[  I(\xi)+F(\xi)\right]  ,
\end{align*}
where the first inequality uses Fatou's lemma and part (i) of Proposition
\ref{ctrlmeas_tight}, the second uses the property that $Q\in\hat{\mathcal{A}%
}(\xi)$ a.s. and the third uses the definition of the rate function in
\eqref{ratefn} and Proposition \ref{prop:ieqihat}.
\end{proof}

\section{Near Optimal Paths with a Unique Characterization}

\label{sec:uniqchae17} In order to prove the large deviation lower bound
\eqref{maintoshowlow}, a natural approach is to consider a $\xi$ that is a
near infimum for the right side in \eqref{maintoshowlow} and construct a
sequence of controls $(\psi^{\varepsilon},\varphi^{\varepsilon})$ such that
$\bar{X}^{\varepsilon}\Rightarrow\xi$ where $\bar{X}^{\varepsilon}$ is as in
\eqref{fastslow_ctrl} with $(\psi,\varphi)$ replaced by $(\psi^{\varepsilon
},\varphi^{\varepsilon})$ respectively. Along with an appropriate convergence
of costs, the variational representation in \eqref{VR22} can then be used to
argue that \eqref{maintoshowlow} holds. For a near optimal $\xi$, let
$(u,\varphi,\pi)\in\mathcal{A}(\xi)$ be a near infimum for the expression on
the right side of \eqref{eq:rtfnnew}. The control pair $(u,\varphi)$ suggests
a natural sequence of controls $(\psi^{\varepsilon},\varphi^{\varepsilon})$
(see \eqref{eq:contfromlim}) for the construction of controlled processes
$\bar{X}^{\varepsilon}$ and an occupation measure $Q^{\varepsilon}$ of the
form in \eqref{eqn:defofQ}. Our strategy in the proof of the lower bound given
in Section \ref{lowbd} will be to show that any limit point $\bar{\xi}$ of
$\bar{X}^{\varepsilon}$ and a suitable marginal $\bar{\pi}$ of the limit point
$\bar{Q}$ of $Q^{\varepsilon}$ solves the system in
\eqref{eq:stateq}-\eqref{eq:eqinvar} for the given $(u,\varphi)$. The key
result then needed in order to complete the proof is to argue that the system
admits a unique solution for the given choice of $(u,\varphi)$, thereby
proving $(\bar{\xi},\bar{\pi})=(\xi,\pi)$ a.s. Although proving such a result
for an arbitrary $\xi$ and an arbitrary $(u,\varphi,\pi)\in\mathcal{A}(\xi)$
appears to be challenging, in this section we show that one can perturb $\xi$
slightly to $\xi^{\ast}$, without affecting the cost too much, and find a near
optimal $(u^{\ast},\varphi^{\ast},\pi^{\ast})\in\mathcal{A}(\xi^{\ast})$ such
that the desired uniqueness property discussed above does in fact hold for
$(u^{\ast},\varphi^{\ast})$. See in particular parts 4 and 5 of the following proposition.

\begin{proposition}
\label{prop:prop4.1} Let $\xi\in C([0,T]:\mathbb{R}^{d})$ be such that
$I(\xi)<\infty$. Fix $\gamma\in(0,1)$. Then there exists $\xi^{\ast}\in
C([0,T]:\mathbb{R}^{d})$ such that
\begin{equation}
\Vert\xi-\xi^{\ast}\Vert_{T}\doteq\sup_{0\leq s\leq T}\Vert\xi(s)-\xi^{\ast
}(s)\Vert<\gamma, \label{eq:eq811_17}%
\end{equation}
and there is $(u^{\ast},\varphi^{\ast}=(\varphi_{ij}^{\ast}),\pi^{\ast}%
=(\pi_{i}^{\ast}))\in\mathcal{A}(\xi^{\ast})$ with the following properties.

\begin{enumerate}
\item For some constants $m_{2},m_{3}\in(0,\infty)$ and all $(s,z)\in
\lbrack0,T]\times\lbrack0,\zeta]$ and $(i,j)\in\mathbb{T}$,
\[
m_{3}\geq\varphi_{ij}^{\ast}(s,z)\geq m_{2}.
\]

\item There is a measurable map $\varrho: [0,T]\times\mathbb{R}^{d}
\to\mathcal{P}(\mathbb{L})$ such that for all $(s,x) \in[0,T]\times
\mathbb{R}^{d}$
\[
\sum_{i\in\mathbb{L}} \varrho_{i}(s,x) A_{ij}^{\varphi_{i}^{*}(s,\cdot)}(x) =
0,
\]
and for some $c_{1} \in(0,\infty)$
\[
\sup_{s\in[0,T]} \max_{i\in\mathbb{L}} |\varrho_{i}(s,x) - \varrho
_{i}(s,\tilde x)| \le c_{1}\|x-\tilde x\|, \mbox{ for all } x,\tilde x
\in\mathbb{R}^{d},
\]
\begin{equation}
\label{eq:eq337_17}\inf_{(s,x)\in[0,T]\times\mathbb{R}^{d}}\min_{i
\in\mathbb{L}} \varrho_{i}(s,x) \ge c_{1}^{-1}.
\end{equation}

\item If for any $(s,x)\in\lbrack0,T]\times\mathbb{R}^{d}$, $\bar{\pi}%
\in\mathcal{P}(\mathbb{L})$ satisfies
\[
\sum_{i\in\mathbb{L}}\bar{\pi}_{i}A_{ij}^{\varphi_{i}^{\ast}(s,\cdot)}(x)=0,
\]
then $\bar{\pi}=\varrho(s,x)$. In particular, $\varrho(s,\xi^{\ast}%
(s))=\pi^{\ast}(s)$.

\item If for the given $u^{\ast}$ and $\varphi^{\ast}$, \eqref{eq:stateq} and
\eqref{eq:eqinvar} are satisfied for any other $(\tilde{\xi},\tilde{\pi})\in
C([0,T]:\mathbb{R}^{d})\times M([0,T]:\mathcal{P}(\mathbb{L}))$, then
$(\tilde{\xi},\tilde{\pi})=(\xi^{\ast},\pi^{\ast})$.

\item The cost associated with $(u^{\ast},\varphi^{\ast})$ satisfies:
\begin{equation}
\sum_{i}\frac{1}{2}\int_{0}^{T}\Vert u_{i}^{\ast}(s)\Vert^{2}\pi_{i}^{\ast
}(s)ds+\sum_{(i,j)\in\mathbb{T}}\int_{[0,\zeta]\times\lbrack0,T]}\ell
(\varphi_{ij}^{\ast}(s,z))\pi_{i}^{\ast}(s)\lambda_{\zeta}(dz)ds\leq
I(\xi)+\gamma.\label{eq:eq336_17}%
\end{equation}

\end{enumerate}
\end{proposition}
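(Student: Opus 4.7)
The approach is to perturb the near-optimal controls associated with $\xi$ to produce new controls whose resulting controlled generator admits a Lipschitz and strictly positive invariant-measure map, so that the associated state equation has a unique solution. Begin by choosing $(u,\varphi,\pi) \in \mathcal{A}(\xi)$ with cost at most $I(\xi) + \gamma/4$; this exists from the definition of $I$. The perturbation of $\varphi$ is carried out in two steps: first cap $\varphi$ at a large $N_0 \ge e$, which does not increase $\ell(\varphi)$ since $\ell$ is monotone on $[1,\infty)$; second, take a convex combination with the constant function $1$, setting, for $(i,j) \in \mathbb{T}$,
\[
\varphi^*_{ij}(s,z) \doteq (1-\delta)\bigl(\varphi_{ij}(s,z) \wedge N_0\bigr) + \delta
\]
for a small $\delta \in (0,1)$ to be fixed later. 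Then $m_2 = \delta \le \varphi^*_{ij} \le (1-\delta)N_0 + \delta = m_3$, which establishes part~1, and convexity of $\ell$ together with $\ell(1)=0$ yields $\ell(\varphi^*_{ij}) \le (1-\delta)\,\ell(\varphi_{ij} \wedge N_0) \le (1-\delta)\,\ell(\varphi_{ij})$ pointwise, so the $\ell$-cost of $\varphi^*$ cannot exceed that of $\varphi$.

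Next I construct $\varrho$ and $\xi^*$. Because $\varphi^*_{ij} \ge \delta$ and, by Assumption \ref{assum2}, $\lambda_\zeta(E_{ij}(x)) = c_i(x)\, r_{ij}(x) \ge \underline{\varsigma}\,\kappa_3 > 0$ for all $(i,j) \in \mathbb{T}$ and $x \in \mathbb{R}^d$, the entries of $A^{\varphi^*(s,\cdot)}(x)$ are uniformly bounded above (by $m_3\,\bar{\varsigma}$) and uniformly bounded below on $\mathbb{T}$ (by $\delta\,\underline{\varsigma}\,\kappa_3$), so the corresponding Markov chain is uniformly irreducible with a positive lower bound on its transition rates across $\mathbb{T}$. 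Existence and uniqueness of an invariant distribution $\varrho(s,x) \in \mathcal{P}(\mathbb{L})$ together with the uniform lower bound $\varrho_i(s,x) \ge c_1^{-1}$ then follow by the same argument used in the proof of Lemma~\ref{invmeas_lip2}, and the same reasoning (ratio-of-polynomials representation applied to rates that are Lipschitz in $x$ by \eqref{liptheta} with constants bounded uniformly in $s$ via $\varphi^* \le m_3$) gives that $x \mapsto \varrho(s,x)$ is Lipschitz, uniformly in $s$. This gives parts 2 and 3.

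With $\varrho$ in hand, after an innocuous $L^\infty$ truncation of $u$ at a large level $n$ to ensure $u^*_i \in L^2([0,T])$ (which adds at most $\gamma/4$ to the cost for $n$ large, since $\int_0^T \|u_i\|^2 \pi_i \, ds < \infty$), define $\xi^*$ as the unique solution of the closed ODE
\[
\dot{\xi}^*(t) = \sum_{j \in \mathbb{L}} \bigl[b_j(\xi^*(t)) + a_j(\xi^*(t))\, u^*_j(t)\bigr]\, \varrho_j(t,\xi^*(t)),\quad \xi^*(0) = x_0,
\]
whose right-hand side is Lipschitz in $x$ and integrable in $t$. Set $\pi^*(s) \doteq \varrho(s,\xi^*(s))$. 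Then $(u^*,\varphi^*,\pi^*) \in \mathcal{A}(\xi^*)$, and part 4 is immediate: if $(\tilde\xi,\tilde\pi)$ satisfies \eqref{eq:stateq}--\eqref{eq:eqinvar} for the same $(u^*,\varphi^*)$, part 3 forces $\tilde\pi(s) = \varrho(s,\tilde\xi(s))$, whereupon $\tilde\xi$ solves the same closed ODE as $\xi^*$ and coincides with it.

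The main obstacle is \eqref{eq:eq811_17} and \eqref{eq:eq336_17}, which determines the choice of $N_0$ and $\delta$. The key analytic input is that $\int \ell(\varphi_{ij})\, d\lambda_\zeta\, ds < \infty$ combined with the inequality $\varphi \le \ell(\varphi)/(\log N_0 - 1)$ on $\{\varphi > N_0\}$ for $N_0 > e$ yields $\int_{\{\varphi_{ij} > N_0\}} \varphi_{ij}\, d\lambda_\zeta\, ds \to 0$ as $N_0 \to \infty$; combined with $|\varphi^* - (\varphi \wedge N_0)| \le \delta(1 + N_0)$ this gives
\[
\int_0^T \bigl|A^{\varphi^*(s,\cdot)}_{ij}(x) - A^{\varphi(s,\cdot)}_{ij}(x)\bigr|\, ds \longrightarrow 0
\]
uniformly in $x$ on compacts, as $N_0 \to \infty$ and $\delta \to 0$ with $\delta N_0 \to 0$. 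Uniform irreducibility of $A^{\varphi^*}$ then forces $\int_0^T \|\varrho(s,\xi(s)) - \pi(s)\|\, ds \to 0$, and a Gronwall argument applied to the difference of the ODEs for $\xi$ and $\xi^*$ yields \eqref{eq:eq811_17}, while the convexity bound on $\ell(\varphi^*)$ together with smallness of $\|\pi^* - \pi\|_{L^1}$ and of the truncation of $u$ gives \eqref{eq:eq336_17}. Executing this quantitative comparison (and coordinating the choice of $(N_0,\delta,n)$) is the technically delicate step.
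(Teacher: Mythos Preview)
Your approach differs substantially from the paper's, and while parts 1--4 essentially go through as you outline, there is a genuine gap in establishing the cost bound \eqref{eq:eq336_17}.

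First, a correctable point: your ``key analytic input'' asserts $\int_{0}^{T}\!\int_{0}^{\zeta}\ell(\varphi_{ij})\,d\lambda_{\zeta}\,ds<\infty$, but only the weighted version $\int\!\int\ell(\varphi_{ij})\,\pi_{i}\,d\lambda_{\zeta}\,ds<\infty$ is available from the near-optimality of $(u,\varphi,\pi)$. Hence the claim $\int_{0}^{T}|A^{\varphi^{*}}_{ij}(x)-A^{\varphi}_{ij}(x)|\,ds\to 0$ is not justified as stated. This is repairable: what one can show is $\int_{0}^{T}\|\pi(s)\bigl(A^{\varphi^{*}(s,\cdot)}-A^{\varphi(s,\cdot)}\bigr)(\xi(s))\|\,ds\to 0$, and since $A^{\varphi^{*}}$ has a uniform spectral gap (from your item~1), the map $v\mapsto vA^{\varphi^{*}}$ is uniformly invertible on $\{v:\sum_{i}v_{i}=0\}$, yielding $\int_{0}^{T}\|\pi(s)-\varrho(s,\xi(s))\|\,ds\to 0$. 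With this correction the Gronwall argument for \eqref{eq:eq811_17} does go through.

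The real gap is in the $\ell$-cost. You need
\[
\sum_{(i,j)\in\mathbb{T}}\int\ell(\varphi^{*}_{ij})\,\pi^{*}_{i}\,d\lambda_{\zeta}\,ds
\;\le\;
\sum_{(i,j)\in\mathbb{T}}\int\ell(\varphi_{ij})\,\pi_{i}\,d\lambda_{\zeta}\,ds+O(\gamma).
\]
Writing the left side as $\int\ell(\varphi^{*}_{ij})\pi_{i}+\int\ell(\varphi^{*}_{ij})(\pi^{*}_{i}-\pi_{i})$, your convexity bound handles the first term. For the second you have only $\|\ell(\varphi^{*}_{ij})\|_{\infty}\le\ell(N_{0})\sim N_{0}\log N_{0}$, while the bound on $\int|\pi^{*}_{i}-\pi_{i}|\,ds$ produced by the argument above contains, unavoidably, the truncation term $\int\pi_{i}\!\int(\varphi_{ij}-N_{0})_{+}\,d\lambda_{\zeta}\,ds$. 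There is no reason for $N_{0}\log N_{0}$ times this quantity to tend to zero; in general it does not. The same obstruction arises for the $u$-cost once $\pi$ is replaced by $\pi^{*}$, since on the set where $\pi_{i}(s)$ is very small nothing prevents $\|u_{i}(s)\|^{2}$ from being large, and $\pi^{*}_{i}(s)\ge c_{1}^{-1}$ is bounded below.

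The paper avoids this entirely by reversing the order of the construction. It perturbs $\pi$ first, setting $\pi^{\delta}=(1-\delta)\pi+\delta\,\nu(\xi)$, and then \emph{defines}
\[
\varphi^{\delta}_{ij}=(1-\delta)\frac{\pi_{i}}{\pi^{\delta}_{i}}\varphi_{ij}+\delta\frac{\nu_{i}(\xi)}{\pi^{\delta}_{i}},\qquad
u^{\delta}_{j}=u_{j}\,\frac{\pi_{j}}{\pi^{\delta}_{j}},
\]
so that the identities $\pi^{\delta}_{i}\varphi^{\delta}_{ij}=(1-\delta)\pi_{i}\varphi_{ij}+\delta\,\nu_{i}(\xi)$ and $u^{\delta}_{j}\pi^{\delta}_{j}=u_{j}\pi_{j}$ hold \emph{pointwise}. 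Convexity of $\ell$ then gives the pointwise inequality $\pi^{\delta}_{i}\,\ell(\varphi^{\delta}_{ij})\le\pi_{i}\,\ell(\varphi_{ij})$, and the $u$-term in the state equation is exactly preserved. These identities are precisely what make the cost controlled without any quantitative estimate on $\|\pi^{\delta}-\pi\|$; building $(\varphi^{\delta},u^{\delta})$ from $\pi^{\delta}$ so that the \emph{weighted} quantities are preserved is the essential idea your construction is missing. (A preliminary reduction to bounded $\varphi_{ij}\pi_{i}$, via a rescaling that leaves the stationary measure unchanged, is also needed to make $\varphi^{\delta}$ bounded.)
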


\begin{remark}
\label{rem:stratofproof}

We now given an outline of the proof strategy. Let $\xi\in C([0,T]:\mathbb{R}%
^{d})$ be such that $I(\xi)<\infty$ and fix $\gamma\in(0,1)$. Let
$(u,\varphi,\pi)\in\mathcal{A}(\xi)$ be such that
\begin{equation}
\sum_{i}\frac{1}{2}\int_{0}^{T}\Vert u_{i}(s)\Vert^{2}\pi_{i}(s)ds+\sum
_{(i,j)\in\mathbb{T}}\int_{[0,\zeta]\times\lbrack0,T]}\ell(\varphi
_{ij}(s,z))\pi_{i}(s)\lambda_{\zeta}(dz)ds\leq I(\xi)+\frac{\gamma}%
{2}.\label{eq:eq1143_17}%
\end{equation}
Note that there are four time dependent objects appearing in the limit
deterministic controlled dynamics: the trajectory $\xi$, the empirical measure
on the fast variables $\pi$, the controls $u$ that correspond to shifting the
mean of the Brownian noises, and the thinning function $\varphi$ that controls
the rates for the fast variables. In addition, there is complete coupling of
the fast and slow variables, and in particular the dynamics of the fast
variables at time $s$ will depend on both the controlled rates and $\xi$ at
$s$. The key issue regarding uniqueness is to make sure that these thinning
functions $\varphi$ can be bounded away from zero, which will imply ergodicity
of the associated Markov processes giving the uniqueness of the corresponding
$\pi$. If for a given collection $(u,\varphi,\pi,\xi)$ the rates are not
bounded away from zero, then we must show they can be perturbed so this is
true, while at the same time making only a small change in $\xi$ and the cost.

The steps are as follows. (a) We first perturb $\pi$ to $\pi^{\delta}$ (see
\eqref{eq:pitipidel}), so that every state has strictly positive mass under
$\pi^{\delta}$. This positivity is used crucially in the remaining steps. (b)
Replacing $\pi$ with $\pi^{\delta}$ in \eqref{eq:stateq} leads to a
perturbation of the target trajectory $\xi$. To ensure that the trajectory
perturbation is not too large, we modify the control $u$ to $u^{\delta}$ in a
way that compensates for the change in $\pi$ (see \eqref{eq:contupert}). (c)
The perturbed measure $\pi^{\delta}$ need not be stationary (i.e., satisfy
\eqref{eq:eqinvar}) for the original thinning control $\varphi$ and the new
trajectory $\xi^{\delta}$. In order to remedy this we next perturb $\varphi$
to $\varphi^{\delta}$ (see \eqref{eq:vphitovphid}). (d) With the perturbed
$\varphi^{\delta}$ \eqref{eq:eqinvar} is satisfied so the $\pi^{\delta}$ would
be stationary, but with $\xi$ rather than $\xi^{\delta}$. In particular the
constructed $(u^{\delta},\varphi^{\delta},\pi^{\delta})$ is not in general in
$\mathcal{A}(\xi^{\delta})$. This leads to our last modification where we
change $\varphi^{\delta}$ to $\tilde{\varphi}^{\delta}$. It is at this point
that the formulation of the original dynamics as the solution to an SDE driven
by a collection of PRMs, and corresponding formulation of the control problem
in terms of thinning functions, is very convenient (see \eqref{eq:finalmod}).
With this change we now have a $(u^{\delta},\tilde{\varphi}^{\delta}%
,\pi^{\delta})\in\mathcal{A}(\xi^{\delta})$. Furthermore with $\delta
=\delta^{\ast}$ sufficiently small these perturbed quantities $(u^{\delta
^{\ast}},\tilde{\varphi}^{\delta^{\ast}},\pi^{\delta^{\ast}},\xi^{\delta
^{\ast}})=(u^{\ast},\varphi^{\ast},\pi^{\ast},\xi^{\ast})$ will satisfy all
the desired properties.
\end{remark}

\noindent\emph{Proof of Proposition \ref{prop:prop4.1}.} Let $\xi$ and
$(u,\varphi,\pi)\in\mathcal{A}(\xi)$ be as in Remark \ref{rem:stratofproof}.
In particular,
\[
\sum_{i}\frac{1}{2}\int_{0}^{T}\Vert u_{i}(s)\Vert^{2}\pi_{i}(s)ds+\sum
_{(i,j)\in\mathbb{T}}\int_{[0,\zeta]\times\lbrack0,T]}\ell(\varphi
_{ij}(s,z))\pi_{i}(s)\lambda_{\zeta}(dz)ds\leq I(\xi)+\frac{\gamma}{2}.
\]
We claim that without loss of generality it can be assumed that for some
$m_{0}\in(0,\infty)$,
\begin{equation}
\sup_{(s,z)\in\lbrack0,T]\times\lbrack0,\zeta]}\max_{(i,j)\in\mathbb{T}%
}[\varphi_{ij}(s,z)\pi_{i}(s)]\leq m_{0}. \label{eq:eq845}%
\end{equation}
We first prove the proposition assuming the claim. The proof of the claim is
given at the end.

For $x\in\mathbb{R}^{d}$ let $\nu(x)$ as in Theorem \ref{assum3} be the
stationary distribution for the fast system when the slow variable equals $x$.
Fix $\delta>0$ and define
\begin{equation}
\pi_{j}^{\delta}(s)\doteq(1-\delta)\pi_{j}(s)+\delta\nu_{j}(\xi(s)).
\label{eq:pitipidel}%
\end{equation}
Note that
\[
\sup_{s\in\lbrack0,T]}\sum_{j}|\pi_{j}^{\delta}(s)-\pi_{j}(s)|\leq2\delta.
\]
Define
\begin{equation}
u_{j}^{\delta}(s)\doteq u_{j}(s)\frac{\pi_{j}(s)}{\pi_{j}^{\delta}(s)}%
,\;s\in\lbrack0,T],j\in\mathbb{L}. \label{eq:contupert}%
\end{equation}
Then $u_{j}^{\delta}(s)\pi_{j}^{\delta}(s)=u_{j}(s)\pi_{j}(s)$ for all $s$ and
$j$. Define
\begin{equation}
\xi^{\delta}(t)=x_{0}+\sum_{j}\int_{0}^{t}b_{j}(\xi^{\delta}(s))\pi
_{j}^{\delta}(s)ds+\sum_{j}\int_{0}^{t}a_{j}(\xi^{\delta}(s))u_{j}^{\delta
}(s)\pi_{j}^{\delta}(s)ds. \label{eq:eq1144_17}%
\end{equation}
From the Lipschitz properties of $b_{j},\sigma_{j}$, (\ref{eq:eq1144_17}) has
a unique solution for the given $u^{\delta}$ and $\pi^{\delta}$. Note that
with $M=I(\xi)+1$,
\[
\int_{0}^{T}\left(  \sum_{j}\pi_{j}^{\delta}(s)(|u_{j}^{\delta}(s)|+1)\right)
ds\leq(T+(2TM)^{1/2})\doteq a(M).
\]
Then by Gronwall's lemma
\[
\Vert\xi-\xi^{\delta}\Vert_{T}\leq K\delta,
\]
where $K=2M_{0}a(M)e^{d_{{\tiny {\mbox{{\em lip}}}}}a(M)}$ and $M_{0}%
=\sup_{0\leq s\leq T,j\in\mathbb{L}}(\Vert b_{j}(\xi(s))\Vert+\Vert a_{j}%
(\xi(s))\Vert)$. Now define, for $(i,j)\in\mathbb{T}$ and $(s,z)\in
\lbrack0,T]\times\lbrack0,\zeta]$,
\begin{equation}
\varphi_{ij}^{\delta}(s,z)\doteq(1-\delta)\frac{\pi_{i}(s)}{\pi_{i}^{\delta
}(s)}\varphi_{ij}(s,z)+\delta\frac{\nu_{i}(\xi(s))}{\pi_{i}^{\delta}(s)}
\label{eq:vphitovphid}%
\end{equation}
and
\begin{equation}
\beta_{ij}^{\delta}(s)\doteq\int_{E_{ij}(\xi(s))}\varphi_{ij}^{\delta
}(s,z)\lambda_{\zeta}(dz),\;s\in\lbrack0,T],i\neq j, \label{eqn:defrhodelta}%
\end{equation}
and $\beta_{ii}^{\delta}(s)\doteq-\sum_{j:j\neq i}\beta_{ij}^{\delta}(s)$.
Then since the $\pi_{i}^{\delta}$ will cancel and $(u,\varphi,\pi
)\in\mathcal{A}(\xi)$, with $A^{\delta}(s)\doteq(\beta_{ij}^{\delta}(s))$
\begin{equation}
\pi^{\delta}(s)A^{\delta}(s)=0. \label{eq:eq1147_17}%
\end{equation}
Thus $\pi^{\delta}(s)$ is stationary for $A^{\delta}(s)$. However, from
\eqref{eq:contrates} $\beta_{ij}^{\delta}(s)=\rho_{i}^{\varphi_{i}^{\delta
}(s,\cdot)}(\xi(s))$, and hence $(u^{\delta},\varphi^{\delta},\pi^{\delta})$
is not in general in $\mathcal{A}(\xi^{\delta})$. We now construct a further
modification, $\tilde{\varphi}^{\delta}$, of $\varphi^{\delta}$ such that
$(u^{\delta},\tilde{\varphi}^{\delta},\pi^{\delta})\in\mathcal{A}(\xi^{\delta
})$, and such that the uniqueness of \eqref{eq:stateq} - \eqref{eq:eqinvar}
(with $(u,\varphi)$ replaced by $(u^{\delta},\tilde{\varphi}^{\delta})$) holds.

Let
\begin{equation}
\rho_{ij}(x)\doteq c_{i}(x)r_{ij}(x)=\lambda_{\zeta}(E_{ij}(x)),
\label{eq:jumpintens}%
\end{equation}
where we recall that $c_{i}(x)$ is the overall rate of transitions out of $i$
for the fast process when the slow process is in state $x$, and $r_{ij}(x)$
gives the probability of transition to state $j$. For $(i,j)\in\mathbb{T}$
define
\begin{equation}
\tilde{\varphi}_{ij}^{\delta}(s,z)=\left\{
\begin{array}
[c]{cc}%
\frac{\beta_{ij}^{\delta}(s)}{\rho_{ij}(\xi^{\delta}(s))}, & z\in E_{ij}%
(\xi^{\delta}(s)),\\
\  & \\
1 & \mbox{ otherwise }.
\end{array}
\right.  \label{eq:finalmod}%
\end{equation}
Then for such $(i,j)$
\[
\beta_{ij}^{\delta}(s)=\int_{E_{ij}(\xi(s))}\varphi_{ij}^{\delta}%
(s,z)\lambda_{\zeta}(dz)=\int_{E_{ij}(\xi^{\delta}(s))}\tilde{\varphi}%
_{ij}^{\delta}(s,z)\lambda_{\zeta}(dz).
\]
Then, from \eqref{eq:eq1144_17} and \eqref{eq:eq1147_17}, $(u^{\delta}%
,\tilde{\varphi}^{\delta},\pi^{\delta})\in\mathcal{A}(\xi^{\delta})$ and
$\beta_{ij}^{\delta}(s)=\rho_{ij}^{\tilde{\varphi}_{i}^{\delta}(s,\cdot)}%
(\xi^{\delta}(s))$, where $\rho_{ij}^{\psi}(x)$ was defined in
(\ref{eq:contrates}), and $\tilde{\varphi}_{i}^{\delta}$ denotes the
collection of controls $(\tilde{\varphi}_{ij}^{\delta},j\in\mathbb{L})$. Next
note that by construction, for $(i,j)\in\mathbb{T}$,
\[
\varphi_{ij}^{\delta}(s,z)\geq\delta\frac{\nu_{i}(\xi(s))}{\pi_{i}^{\delta
}(s)}\geq\delta\underline{\nu},\mbox{ for all }(s,z)\in\lbrack0,T]\times
\lbrack0,\zeta]
\]
and, from \eqref{eq:eq845}, for all $\delta>0$
\[
\varphi_{ij}^{\delta}(s,z)\pi_{i}^{\delta}(s)\leq m_{0}+1\doteq m_{1}.
\]
Let $\underline{r}\doteq\underline{\varsigma}\kappa_{3}$, where $\underline
{\varsigma}$ and $\kappa_{3}$ are defined above Assumption \ref{assum2}. Then
for $(i,j)\in\mathbb{T}$ the definition (\ref{eqn:defrhodelta}) implies%
\[
\frac{m_{1}\zeta}{\delta\underline{\nu}}\geq\beta_{ij}^{\delta}(s)\geq
\delta\underline{\nu}\,\underline{r}.
\]
Also, from \eqref{eq:jumpintens}, for each $s$
\[
\int_{\lbrack0,\zeta]}\ell(\tilde{\varphi}_{ij}^{\delta}(s,z))\lambda_{\zeta
}(dz)=\ell\left(  \frac{\beta_{ij}^{\delta}(s)}{\rho_{ij}(\xi^{\delta}%
(s))}\right)  \rho_{ij}(\xi^{\delta}(s))
\]
and using convexity
\[
\int_{\lbrack0,\zeta]}\ell(\varphi_{ij}^{\delta}(s,z))\lambda_{\zeta}%
(dz)\geq\ell\left(  \frac{\beta_{ij}^{\delta}(s)}{\rho_{ij}(\xi(s))}\right)
\rho_{ij}(\xi(s)).
\]
It is easy to check that for $a\geq0$ and $b,c>0$,
\[
\left\vert \ell\left(  \frac{a}{b}\right)  b-\ell\left(  \frac{a}{c}\right)
c\right\vert \leq\left(  1+\frac{a}{b\wedge c}\right)  |b-c|.
\]
The Lipschitz properties of the underlying transition rates $\rho_{ij}(\cdot)$
(see \eqref{liptheta}) yield the following inequalities, each of which is
explained after the display:
\begin{align*}
&  \sum_{(i,j)\in\mathbb{T}}\int_{[0,\zeta]\times\lbrack0,T]}\pi_{i}^{\delta
}(s)\ell(\tilde{\varphi}_{ij}^{\delta}(s,z))\lambda_{\zeta}(dz)ds-\sum
_{(i,j)\in\mathbb{T}}\int_{[0,\zeta]\times\lbrack0,T]}\pi_{i}^{\delta}%
(s)\ell(\varphi_{ij}^{\delta}(s,z))\lambda_{\zeta}(dz)ds\\
&  \quad\leq\kappa_{2}\Vert\xi-\xi^{\delta}\Vert_{T}\int_{[0,T]}\sum
_{(i,j)\in\mathbb{T}}\pi_{i}^{\delta}(s)\left(  \frac{\beta_{ij}^{\delta}%
(s)}{\underline{r}}+1\right)  ds\\
&  \quad\leq\kappa_{2}T|\mathbb{L}|\Vert\xi-\xi^{\delta}\Vert_{T}+\frac
{\kappa_{2}}{\underline{r}}\Vert\xi-\xi^{\delta}\Vert_{T}\int_{[0,\zeta
]\times\lbrack0,T]}\sum_{(i,j)\in\mathbb{T}}\pi_{i}^{\delta}(s)\varphi
_{ij}^{\delta}(s,z)\lambda_{\zeta}(dz)ds\\
&  \quad\leq\kappa_{2}T|\mathbb{L}|\Vert\xi-\xi^{\delta}\Vert_{T}+\frac
{\kappa_{2}}{\underline{r}}\Vert\xi-\xi^{\delta}\Vert_{T}\int_{[0,\zeta
]\times\lbrack0,T]}\left(  |\mathbb{L}|+\sum_{(i,j)\in\mathbb{T}}\pi
_{i}(s)\varphi_{ij}(s,z)\right)  \lambda_{\zeta}(dz)ds\\
&  \quad\leq\delta K_{1}.
\end{align*}
The first inequality uses the previous three displays, the second uses the
definition of $\beta_{ij}^{\delta}$ in (\ref{eqn:defrhodelta}), the third uses
(\ref{eq:vphitovphid}), and the final one uses the definition
\[
K_{1}\doteq K\kappa_{2}\left(  T|\mathbb{L}|+\frac{1}{\underline{r}}\left(
|\mathbb{L}|\zeta T(1+e)+eM\right)  \right)  ,
\]
$\Vert\xi-\xi^{\delta}\Vert_{T}\leq\delta$, and the fact that $x\leq
e(1+\ell(x))$ for all $x\geq0$. Hence we obtain
\begin{align*}
\int_{\lbrack0,\zeta]\times\lbrack0,T]}\sum_{(i,j)\in\mathbb{T}}\pi
_{i}^{\delta}(s)\ell(\tilde{\varphi}_{ij}^{\delta}(s,z))\lambda_{\zeta}(dz)ds
&  \leq\int_{\lbrack0,\zeta]\times\lbrack0,T]}\sum_{(i,j)\in\mathbb{T}}\pi
_{i}^{\delta}(s)\ell(\varphi_{ij}^{\delta}(s,z))\lambda_{\zeta}(dz)ds+K_{1}%
\delta\\
&  \leq\int_{\lbrack0,\zeta]\times\lbrack0,T]}\sum_{(i,j)\in\mathbb{T}}\pi
_{i}(s)\ell(\varphi_{ij}(s,z))\lambda_{\zeta}(dz)ds+K_{1}\delta,
\end{align*}
where the last line is a consequence of the fact that
\begin{align}
\pi_{i}^{\delta}(s)\ell(\tilde{\varphi}_{ij}^{\delta}(s,z))  &  =\pi
_{i}^{\delta}(s)\ell\left(  (1-\delta)\frac{\pi_{i}(s)}{\pi_{i}^{\delta}%
(s)}\varphi_{ij}(s,z)+\delta\frac{\nu_{i}(\xi(s))}{\pi_{i}^{\delta}(s)}\right)
\nonumber\\
&  \leq\pi_{i}^{\delta}(s)(1-\delta)\frac{\pi_{i}(s)}{\pi_{i}^{\delta}(s)}%
\ell(\varphi_{ij}(s,z))\leq\pi_{i}(s)\ell(\varphi_{ij}(s,z)).
\label{eq:eq1259}%
\end{align}
Next note that if $\pi_{i}(s)\leq\pi_{i}^{\delta}(s)$ then%
\[
\Vert u_{i}^{\delta}(s)\Vert^{2}\pi_{i}^{\delta}(s)=\Vert u_{i}(s)\Vert
^{2}\frac{\pi_{i}(s)}{\pi_{i}^{\delta}(s)}\pi_{i}(s)\leq\Vert u_{i}%
(s)\Vert^{2}\pi_{i}(s).
\]
However if $\pi_{i}(s)\geq\pi_{i}^{\delta}(s)$ then $\pi_{i}(s)\geq\nu_{i}%
(\xi(s))$ follows, and thus $\pi_{i}^{\delta}(s)\geq\underline{\nu}$.
Therefore
\[
\left\vert \frac{\pi_{i}(s)}{\pi_{i}^{\delta}(s)}-1\right\vert \leq
\frac{2\delta}{\pi_{i}^{\delta}(s)}\leq\frac{2\delta}{\underline{\nu}}.
\]
Thus in this case
\[
\Vert u_{i}^{\delta}(s)\Vert^{2}\pi_{i}^{\delta}(s)\leq\Vert u_{i}(s)\Vert
^{2}\pi_{i}(s)+\frac{2\delta}{\underline{\nu}}\Vert u_{i}(s)\Vert^{2}\pi
_{i}(s).
\]
Combining the two cases we have from \eqref{eq:eq1143_17}
\begin{equation}
\frac{1}{2}\int_{0}^{T}\sum_{i}\Vert u_{i}^{\delta}(s)\Vert^{2}\pi_{i}%
^{\delta}(s)ds\leq\frac{1}{2}\int_{0}^{T}\sum_{i}\Vert u_{i}(s)\Vert^{2}%
\pi_{i}(s)ds+\frac{2\delta}{\underline{\nu}}M. \label{eq:eq100_17}%
\end{equation}
Taking $\delta^{\ast}\doteq\min\{\gamma/K,\gamma/4K_{1},\gamma\underline{\nu
}/8M\}$ we now see that with
\[
(\xi^{\ast},u^{\ast},\varphi^{\ast},\pi^{\ast})\doteq(\xi^{\delta^{\ast}%
},u^{\delta^{\ast}},\tilde{\varphi}^{\delta^{\ast}},\pi^{\delta^{\ast}}),
\]
$(u^{\ast},\varphi^{\ast},\pi^{\ast})\in\mathcal{A}(\xi^{\ast})$. Also, for
all $(s,z)\in\lbrack0,T]\times\lbrack0,\zeta]$ and $i\in\mathbb{L}$,
\begin{equation}
m_{3}\doteq\frac{m_{1}\zeta}{\delta^{\ast}\underline{r}\,\underline{\nu}}%
\vee1\geq\varphi_{ij}^{\ast}(s,z)\geq\frac{\delta^{\ast}\underline{\nu
}\,\underline{r}}{\zeta}\wedge1\doteq m_{2}, \label{eq:eq901}%
\end{equation}
namely item 1 in the proposition holds. From \eqref{eq:eq901} and the
definition (\ref{eq:contrates}), for $(i,j)\in\mathbb{T}$, $s\in\lbrack0,T]$
and $x\in\mathbb{R}^{d}$
\begin{equation}
\rho_{ij}^{\varphi_{i}^{\ast}(s,\cdot)}(x)\in\lbrack m_{2}\underline{r}%
,m_{3}\zeta]. \label{eq:eq906}%
\end{equation}
Also, from \eqref{liptheta}, for all $x,x^{\prime}\in\mathbb{R}^{d}$,
\begin{equation}
|\rho_{ij}^{\varphi_{i}^{\ast}(s,\cdot)}(x)-\rho_{ij}^{\varphi_{i}^{\ast
}(s,\cdot)}(x^{\prime})|\leq m_{3}\kappa_{2}\Vert x-x^{\prime}\Vert.
\label{eq:eq1210}%
\end{equation}
Using \eqref{eq:eq906} and \eqref{eq:eq1210} it follows as in the proof of
Lemma \ref{invmeas_lip2} that items 2 and 3 in the proposition hold. Next if
\eqref{eq:stateq} (with $u$ replaced by $u^{\ast}$) and \eqref{eq:eqinvar}
(with $\varphi$ replaced by $\varphi^{\ast}$) are satisfied for any other
$(\tilde{\xi},\tilde{\pi})\in C([0,T]:\mathbb{R}^{d})\times\mathbb{M}%
([0,T]:\mathcal{P}(\mathbb{L}))$, then we must have from item 3 in the
proposition that $\tilde{\pi}(s)=\varrho(s,\tilde{\xi}(s))$ for all
$s\in\lbrack0,T]$. The Lipschitz property of $b,a$ and $\varrho$ then gives
that $\xi^{\ast}(t)=\tilde{\xi}(t)$, and thus also $\pi^{\ast}(t)=\tilde{\pi
}(t)$, for all $t\in\lbrack0,T]$. This completes the proof of item 4. Finally
consider item 5. Note that from \eqref{eq:eq1259}, \eqref{eq:eq100_17} and the
choice of $\delta^{\ast}$,%
\begin{align*}
&  \frac{1}{2}\sum_{i}\int_{0}^{T}\Vert u_{i}^{\ast}(s)\Vert^{2}\pi_{i}^{\ast
}(s)ds+\sum_{(i,j)\in\mathbb{T}}\int_{[0,\zeta]\times\lbrack0,T]}\ell
(\varphi_{ij}^{\ast}(s,z))\pi_{i}^{\ast}(s)\lambda_{\zeta}(dz)ds\\
&  \quad\leq\sum_{i}\frac{1}{2}\int_{0}^{T}\Vert u_{i}(s)\Vert^{2}\pi
_{i}(s)ds+\sum_{(i,j)\in\mathbb{T}}\int_{[0,\zeta]\times\lbrack0,T]}%
\ell(\varphi_{ij}(s,z))\pi_{i}(s)\lambda_{\zeta}(dz)ds+\gamma/2\\
&  \quad\leq I(\xi)+\gamma,
\end{align*}
where the last line is from \eqref{eq:eq1143_17}. This proves 5. We now prove
the claim made in \eqref{eq:eq845}. Note that we do not change the dynamics at
all if we redefine $\varphi_{ij}$ in the following way. With $\varphi$ on the
right equal to the old version and $\varphi$ on the left the new, for
$(i,j)\in\mathbb{T}$ set%
\[
\varphi_{ij}(s,z)=\left\{
\begin{array}
[c]{cc}%
\frac{\rho_{ij}^{\varphi_{i}(s,\cdot)}(\xi(s))}{\rho_{ij}(\xi(s))}, & z\in
E_{ij}(\xi(s)),\\
\  & \\
1, & z\in\lbrack0,\zeta]\setminus E_{ij}(\xi(s)).
\end{array}
\right.
\]
This amounts to assuming that outside $E_{ij}(\xi(s))$ the controlled jump
rates are the same as for the original system, and that within $E_{ij}%
(\xi(s))$ they are constant in $z$, in such a way the overall jump rates do
not change. Owing to convexity of $\ell$ and $\ell(1)=0$, this can only lower
cost while preserving the dynamics, and could have been assumed for any
candidate control for the jumps from the outset. Let
\begin{equation}
v(s)\doteq1+\sum_{(i,j)\in\mathbb{T}}\pi_{i}(s)\rho_{ij}^{\varphi_{i}}%
(\xi(s)),\;\bar{\rho}_{ij}^{\varphi_{i}}(\xi(s))\doteq\rho_{ij}^{\varphi_{i}%
}(\xi(s))/v(s) \label{eq:eqnormalv}%
\end{equation}
and for $\alpha>0$ and $(i,j)\in\mathbb{T}$
\[
\varphi_{ij}^{\alpha}(s,z)=\left\{
\begin{array}
[c]{cc}%
\alpha\frac{\bar{\rho}_{ij}^{\varphi_{i}}(\xi(s))}{\rho_{ij}(\xi(s))}, & z\in
E_{ij}(\xi(s)),\\
\  & \\
\frac{\alpha}{v(s)}, & z\in\lbrack0,\zeta]\setminus E_{ij}(\xi(s)).
\end{array}
\right.
\]
Thus for $\alpha>0$ the controlled jump rates are uniformly scaled by
$\alpha/v(s)$, and therefore $\sum_{i\in\mathbb{L}}\pi_{i}(s)A^{\varphi
_{i}^{\alpha}(s,\cdot)}(\xi(s))=0$. Since with $\alpha=v(s)$, $\varphi
^{\alpha}=\varphi$,
\[
\inf_{\alpha>0}\sum_{(i,j)\in\mathbb{T}}\pi_{i}(s)\int_{[0,\zeta]}\ell
(\varphi_{ij}^{\alpha}(s,z))\lambda_{\zeta}(dz)\leq\sum_{(i,j)\in\mathbb{T}%
}\pi_{i}(s)\int_{[0,\zeta]}\ell(\varphi_{ij}(s,z))\lambda_{\zeta}(dz).
\]
We now compute the infimum on the left side. For notational simplicity, write
$\bar{\rho}_{ij}^{\varphi_{i}(s,\cdot)}(\xi(s))$ as $\bar{\rho}_{ij}$, and
$\rho_{ij}(\xi(s))$ as $\rho_{ij}$. Also let $\theta_{ij}\doteq\zeta
-\lambda_{\zeta}(E_{ij}(\xi(s)))$. Note that $1\leq\theta_{ij}\leq\zeta$.
Differentiating with respect to $\alpha$ and setting the derivative to $0$, we
get
\begin{align*}
&  \log(\alpha)\sum_{(i,j)\in\mathbb{T}}\pi_{i}(s)\left(  \bar{\rho}%
_{ij}+\frac{\theta_{ij}}{v(s)}\right) \\
&  \quad=-\sum_{(i,j)\in\mathbb{T}}\pi_{i}(s)\left(  \bar{\rho}_{ij}%
\log\left(  \frac{\bar{\rho}_{ij}}{\rho_{ij}}\right)  +\frac{\theta_{ij}%
}{v(s)}\log\left(  \frac{1}{v(s)}\right)  \right)  .
\end{align*}
Thus
\[
\log(\alpha)=-\frac{\sum_{(i,j)\in\mathbb{T}}\pi_{i}(s)\left(  \bar{\rho}%
_{ij}\log\left(  \frac{\bar{\rho}_{ij}}{\rho_{ij}}\right)  +\frac{\theta_{ij}%
}{v(s)}\log\left(  \frac{1}{v(s)}\right)  \right)  }{\sum_{(i,j)\in\mathbb{T}%
}\pi_{i}(s)\left(  \bar{\rho}_{ij}+\frac{\theta_{i}}{v(s)}\right)  }.
\]
It is easily checked that there is $m_{6}\in(0,\infty)$ such that for all
$s\in\lbrack0,T]$ the minimizing $\alpha$ satisfies $\alpha\leq m_{6}$. Thus
from the definition of $\varphi^{\alpha}$ and since from \eqref{eq:eqnormalv}
$\pi_{i}(s)\bar{\rho}_{ij}(\xi(s))\leq1$, we see that with this $\alpha$, for
all $(i,j)\in\mathbb{T}$, $(s,z)\in\lbrack0,T]\times\lbrack0,\zeta]$
\[
\pi_{i}(s)\varphi_{ij}^{\alpha}(s,z)\leq\max\{m_{6}/\underline{r},m_{6}\}.
\]
This proves the claim in \eqref{eq:eq845} and completes the proof of the
proposition. \hfill\qed

\section{Large Deviation Lower Bound}

\label{lowbd}

The goal of this section is to prove the following theorem.

\begin{theorem}
\label{thm:mainlowbd} For any $F\in C_{b}(C([0,T]:\mathbb{R}^{d}))$, the
inequality in \eqref{maintoshowlow} holds, namely
\begin{equation}
\liminf_{\varepsilon\rightarrow0}\varepsilon\log\mathbb{E}\left[  \exp\left(
-\varepsilon^{-1}F(X^{\varepsilon})\right)  \right]  \geq-\inf_{\xi\in
C([0,T]:\mathbb{R}^{d})}\{F(\xi)+I(\xi)\}, \label{eq:lowbdnew}%
\end{equation}
where $X^{\varepsilon}$ solves \eqref{fastslow}, and $I$ is as defined in \eqref{ratefn}.
\end{theorem}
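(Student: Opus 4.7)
The starting point is the variational representation \eqref{VR22}, which reduces \eqref{eq:lowbdnew} to producing, for every $\xi \in C([0,T]:\mathbb{R}^d)$ with $I(\xi) < \infty$ and every $\gamma \in (0,1)$, a sequence of controls $u^\varepsilon = (\psi^\varepsilon, \varphi^\varepsilon) \in \mathcal{U}_b$ for which $\limsup_{\varepsilon \to 0} \mathbb{E}[L_T(u^\varepsilon) + F(\bar X^\varepsilon)] \le F(\xi) + I(\xi) + O(\gamma)$, where $\bar X^\varepsilon$ solves \eqref{fastslow_ctrl}. To construct such a sequence, apply Proposition \ref{prop:prop4.1} to $(\xi, \gamma)$ to obtain a perturbed trajectory $\xi^*$ with $\|\xi - \xi^*\|_T < \gamma$ and $(u^*, \varphi^*, \pi^*) \in \mathcal{A}(\xi^*)$ whose associated cost is at most $I(\xi) + \gamma$, whose thinning functions satisfy $m_2 \le \varphi^*_{ij} \le m_3$ uniformly, and for which the deterministic system \eqref{eq:stateq}--\eqref{eq:eqinvar} has a unique solution. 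Then define predictable feedback controls
\begin{equation*}
\psi^\varepsilon(t) \doteq \sum_{i \in \mathbb{L}} u^*_i(t) 1_{\{\bar Y^\varepsilon(t-) = i\}}, \qquad \varphi^\varepsilon_{ij}(t, z) \doteq \varphi^*_{ij}(t,z) 1_{\{\bar Y^\varepsilon(t-) = i\}} + 1_{\{\bar Y^\varepsilon(t-) \ne i\}},
\end{equation*}
in accordance with the convention \eqref{eq:eq752-17}. The uniform bounds on $\varphi^*$ and the square integrability of $u^*_i \sqrt{\pi^*_i}$ (from the cost bound) guarantee $u^\varepsilon \in \mathcal{U}_b$ with $L_T(u^\varepsilon)$ bounded almost surely uniformly in $\varepsilon$.

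The next step is a weak convergence analysis for the occupation measure $Q^\varepsilon$ defined in \eqref{eqn:defofQ}. By Proposition \ref{ctrlmeas_tight}, $(\bar X^\varepsilon, Q^\varepsilon)$ is tight and any limit $(\bar\xi, \bar Q)$ satisfies \eqref{xi} and \eqref{inv}. Disintegrating $\bar Q(ds \times \{y\} \times d\eta \times dz) = \bar\pi_y(s)\, ds\, [\bar Q]_{34|12}(d\eta \times dz \mid y, s)$ and using that $\psi^\varepsilon$ and $\varphi^\varepsilon$ are deterministic functions of $(s, \bar Y^\varepsilon(s-))$ built directly from $(u^*, \varphi^*)$, the conditional distribution $[\bar Q]_{34|12}(\cdot \mid y, s)$ is forced to concentrate on the pair $(\varphi^*_{y,\cdot}(s,\cdot) \lambda_\zeta(\cdot),\, u^*_y(s))$. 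Consequently $(\bar\xi, \bar\pi)$ solves \eqref{eq:stateq}--\eqref{eq:eqinvar} with the data $(u^*, \varphi^*)$. Part 4 of Proposition \ref{prop:prop4.1} then forces $(\bar\xi, \bar\pi) = (\xi^*, \pi^*)$ almost surely, and in particular $\bar X^\varepsilon \Rightarrow \xi^*$.

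A cost computation concludes the argument. Since $\ell(1) = 0$,
\begin{equation*}
L_T(u^\varepsilon) = \tfrac{1}{2}\sum_{i} \int_0^T \|u^*_i(s)\|^2 1_{\{\bar Y^\varepsilon(s-) = i\}} ds + \sum_{(i,j) \in \mathbb{T}} \int_{[0,T]\times[0,\zeta]} \ell(\varphi^*_{ij}(s,z)) 1_{\{\bar Y^\varepsilon(s-) = i\}} \lambda_\zeta(dz) ds,
\end{equation*}
and the identification $\bar\pi = \pi^*$ together with dominated convergence (using the uniform bounds supplied by Proposition \ref{prop:prop4.1}(1)) yields $\mathbb{E} L_T(u^\varepsilon) \to \sum_i \int_0^T \bigl[\tfrac{1}{2}\|u^*_i(s)\|^2 + \sum_{j:(i,j)\in\mathbb{T}} \int_0^\zeta \ell(\varphi^*_{ij}(s,z)) dz\bigr] \pi^*_i(s) ds$, which is at most $I(\xi) + \gamma$ by Proposition \ref{prop:prop4.1}(5). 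Combined with the convergence $F(\bar X^\varepsilon) \to F(\xi^*)$ in expectation and $|F(\xi^*) - F(\xi)| \le \omega_F(\gamma)$, this yields \eqref{eq:lowbdnew} after taking an infimum over $\xi$ and sending $\gamma \to 0$. The principal obstacle is the identification step in the weak convergence paragraph: without the uniqueness in Proposition \ref{prop:prop4.1}(4) many distinct $(\bar\xi, \bar\pi)$ could a priori satisfy \eqref{eq:stateq}--\eqref{eq:eqinvar}, and the uniform positivity $\varphi^* \ge m_2 > 0$ furnished by Proposition \ref{prop:prop4.1}(1), which ensures ergodicity of the effective fast chain at every frozen slow state and underlies parts 2--4, is precisely what makes that uniqueness available.
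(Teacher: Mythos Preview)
Your overall plan matches the paper's proof: same feedback controls, same appeal to Proposition~\ref{ctrlmeas_tight} for tightness, and same invocation of Proposition~\ref{prop:prop4.1}(4) to pin down the limit. The gap is in the identification step. The assertion that ``the conditional distribution $[\bar Q]_{34|12}(\cdot\mid y,s)$ is forced to concentrate on the pair $(\varphi^*_{y,\cdot}(s,\cdot)\lambda_\zeta,\,u^*_y(s))$'' is the entire technical content of the lower bound, and nothing you write justifies it. The obstruction is that $Q^\varepsilon$ at ``time $s$'' records $(\bar Y^\varepsilon(u),\psi^\varepsilon(u),\eta^\varepsilon(u))$ for $u\in[s,s+\Delta_\varepsilon]$, while $u^*_j(\cdot)$ and $\varphi^*_{ij}(\cdot,z)$ are only \emph{measurable} in time; weak convergence of $Q^\varepsilon$ does not by itself transfer the relation $z=u^*_y(u)$ to $z=u^*_y(s)$ in the limit. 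The paper does not prove that concentration statement. Instead it establishes, for continuous $h$ and each $j$, the integrated identity
\[
\int_{\mathbb H_t} h(s)'1_{\{j\}}(y)\,z\,\bar Q(d\mathbf v)=\int_0^t h(s)'u^*_j(s)\bar\pi_j(s)\,ds
\]
(and its analogue with $\varphi^*$ in place of $u^*$), by combining a Lusin-type lemma for measures with Lebesgue time marginal (Lemma~\ref{lem:lusincgce}) with the elementary averaging fact that $\int_{\Delta_\varepsilon}^t\Delta_\varepsilon^{-1}\int_{r-\Delta_\varepsilon}^r g(s)(f(r)-f(s))\,ds\,dr\to 0$ for $f$ integrable and $g$ bounded. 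These integrated identities are exactly what is needed to verify \eqref{eq:stateq}--\eqref{eq:eqinvar} for $(\bar\xi,\bar\pi)$ with data $(u^*,\varphi^*)$, after which Proposition~\ref{prop:prop4.1}(4) applies.

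The same issue recurs in your cost paragraph: the passage from $\int_0^T\|u^*_i(s)\|^2 1_{\{\bar Y^\varepsilon(s)=i\}}\,ds$ to $\int_0^T\|u^*_i(s)\|^2\pi^*_i(s)\,ds$ is not a dominated-convergence argument (there is no pointwise convergence of the indicator); it again follows from Lemma~\ref{lem:lusincgce} applied to the integrable map $s\mapsto\|u^*_i(s)\|^2$ and the convergence of $[Q^\varepsilon]_{12}$. A smaller point: the uniform a.s.\ bound $L_T(u^\varepsilon)\le M$ requires $\int_0^T\|u^*_i(s)\|^2\,ds<\infty$, which follows from the lower bound $\pi^*_i(s)\ge c_1^{-1}$ in Proposition~\ref{prop:prop4.1}(2), not merely from the finiteness of $\int_0^T\|u^*_i(s)\|^2\pi^*_i(s)\,ds$ that the cost bound gives directly.
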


To do this we will show an upper bound on the corresponding variational
representations. The situation is in some sense simpler than the corresponding
lower bound, since a fixed control is being used. Thus the analysis is
essentially just the law of large numbers for a two time scale system, with
the main effort being to justify the replacement of the empirical measure of
the fast component by the corresponding stationary measure in the limit. We
begin with an elementary lemma.

\begin{lemma}
\label{lem:lusincgce} Let $m_{n}$, $m$ be finite measures on $[0,T]\times
\mathbb{L}$ such that the first marginal of $m_{n}$ and $m$ is Lebesgue
measure:
\[
m_{n}([a,b]\times\mathbb{L})=m([a,b]\times\mathbb{L})=b-a\mbox{ for all
}0\leq a\leq b\leq T.
\]
Suppose that $m_{n}$ converges weakly to $m$. Let $v:[0,T]\rightarrow
\mathbb{R}$ be an integrable map, i.e., $\int_{0}^{T}|v(s)|ds<\infty$. Then
\begin{equation}
\int_{\mathbb{L}\times\lbrack0,T]}v(s)1_{\{j\}}(y)m_{n}(dy\times
ds)\rightarrow\int_{\mathbb{L}\times\lbrack0,T]}v(s)1_{\{j\}}(y)m(dy\times
ds). \label{eq:eq637_17}%
\end{equation}

\end{lemma}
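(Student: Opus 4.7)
The plan is to reduce the convergence claim to weak convergence tested against a continuous bounded function on $[0,T]\times\mathbb{L}$, and to absorb the non-continuity/unboundedness of $v$ into a small error using the fact that the first marginals of $m_n$ and $m$ coincide with Lebesgue measure. Note that $\mathbb{L}$ carries the discrete topology, so $y\mapsto 1_{\{j\}}(y)$ is already continuous and bounded; the only obstacle to applying weak convergence directly is the lack of regularity of $v$.

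First I would fix $\varepsilon>0$ and approximate $v$ by a continuous function. Concretely, since $v\in L^1([0,T])$, for $K$ large the truncation $v_K\doteq (v\wedge K)\vee(-K)$ satisfies $\int_0^T|v-v_K|\,ds<\varepsilon/2$; then by Lusin's theorem (equivalently, by density of $C([0,T])$ in $L^1([0,T])$) one can choose $\tilde v\in C([0,T])$ with $\|\tilde v\|_\infty\le K$ and $\int_0^T|v_K-\tilde v|\,ds<\varepsilon/2$, so that $\int_0^T|v-\tilde v|\,ds<\varepsilon$.

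Second, I would control the error produced by this substitution uniformly in $n$. Since $(s,y)\mapsto|v(s)-\tilde v(s)|$ depends only on $s$ and the first marginal of $m_n$ is Lebesgue,
\[
\left|\int_{\mathbb{L}\times[0,T]}(v(s)-\tilde v(s))\,1_{\{j\}}(y)\,m_n(dy\times ds)\right|
\le \int_{[0,T]}|v(s)-\tilde v(s)|\,m_n(\mathbb{L}\times ds)=\int_0^T|v-\tilde v|\,ds<\varepsilon,
\]
and the identical bound holds with $m_n$ replaced by $m$. For the regularized integrand, the map $(s,y)\mapsto \tilde v(s)\,1_{\{j\}}(y)$ is continuous and bounded on $[0,T]\times\mathbb{L}$, so weak convergence $m_n\Rightarrow m$ gives
\[
\int_{\mathbb{L}\times[0,T]}\tilde v(s)\,1_{\{j\}}(y)\,m_n(dy\times ds)
\longrightarrow
\int_{\mathbb{L}\times[0,T]}\tilde v(s)\,1_{\{j\}}(y)\,m(dy\times ds).
\]

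Combining these ingredients via a triangle inequality yields
\[
\limsup_{n\to\infty}\left|\int v(s)1_{\{j\}}(y)\,m_n(dy\times ds)-\int v(s)1_{\{j\}}(y)\,m(dy\times ds)\right|\le 2\varepsilon,
\]
and letting $\varepsilon\downarrow 0$ establishes \eqref{eq:eq637_17}. The only non-routine point is the uniform-in-$n$ error control at the approximation step; this is precisely where the assumption that the first marginals of $m_n$ equal Lebesgue measure (and not merely that the total masses converge) is used, and it is what makes the argument work for any integrable $v$ rather than only bounded continuous ones.
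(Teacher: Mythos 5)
Your proposal is correct and follows essentially the same path as the paper: approximate $v$ by a bounded continuous function (via truncation plus Lusin/density in $L^1$), use the marginal condition $m_n(\mathbb{L}\times\cdot)=\lambda_T$ to control the approximation error uniformly in $n$, and apply weak convergence to the continuous test function. The only difference is packaging — the paper does a two-stage reduction (continuous $\Rightarrow$ bounded via Lusin $\Rightarrow$ integrable via truncation), whereas you collapse this into a single $L^1$-approximation step, but the underlying ideas are identical.
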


\begin{proof}
Clearly \eqref{eq:eq637_17} holds if $v$ is continuous. Now suppose that $v$
is bounded and let $M_{0}=\sup_{s\in\lbrack0,T]}|v(s)|$. Fix $\gamma>0$. Then
by Lusin's theorem, there is a continuous function $\bar{v}:[0,T]\rightarrow
\mathbb{R}$ such that $\sup_{s\in\lbrack0,T]}|\bar{v}(s)|\leq M_{0}$ and
$\lambda_{T}(s\in\lbrack0,T]:v(s)\neq\bar{v}(s))\leq\gamma$. Since
\eqref{eq:eq637_17} holds with $v$ replaced with $\bar{v}$, we have
\begin{align*}
&  \limsup_{n\rightarrow\infty}\left\vert \int_{\mathbb{L}\times\lbrack
0,T]}v(s)1_{\{j\}}(y)m_{n}(dy\times ds)-\int_{\mathbb{L}\times\lbrack
0,T]}v(s)1_{\{j\}}(y)m(dy\times ds)\right\vert \\
&  \leq\limsup_{n\rightarrow\infty}\left\vert \int_{\mathbb{L}\times
\lbrack0,T]}\bar{v}(s)1_{\{j\}}(y)m_{n}(dy\times ds)-\int_{\mathbb{L}%
\times\lbrack0,T]}\bar{v}(s)1_{\{j\}}(y)m(dy\times ds)\right\vert
+2M_{0}\gamma\\
&  =2M_{0}\gamma.
\end{align*}
Sending $\gamma\rightarrow0$ we see that \eqref{eq:eq637_17} holds for all
bounded $v$. Finally, consider a general integrable $v$. Then
\eqref{eq:eq637_17} holds with $v$ replaced with $v_{M}\doteq v1_{\{|v|\leq
M\}}$. Also, as $M\rightarrow\infty$
\[
\sup_{n\in\mathbb{N}}\int_{\mathbb{L}\times\lbrack0,T]}|v(s)|1_{\{|v(s)|\geq
M\}}1_{\{j\}}(y)m_{n}(dy\times ds)\leq\int_{\{|v|\geq M\}}|v(s)|ds\rightarrow
0.
\]
This shows that \eqref{eq:eq637_17} holds for a general integrable $v$ and
completes the proof of the lemma. \hfill
\end{proof}

\textbf{Proof of Theorem \ref{thm:mainlowbd}} Without loss of generality we
can assume that $F$ is Lipschitz continuous \cite[Corollary 1.2.5]{DE97},
i.e., for some $C_{F}\in(0,\infty)$,
\[
|F(\xi)-F(\tilde{\xi})|\leq C_{F}\Vert\xi-\tilde{\xi}\Vert_{T}%
\mbox{ for all }\xi,\tilde{\xi}\in C([0,T]:\mathbb{R}^{d}).
\]

Fix $\gamma\in(0,1)$. Let $\xi\in C([0,T]:\mathbb{R}^{d})$ be such that
\begin{equation}
I(\xi)+F(\xi)\leq\inf_{\zeta\in C([0,T]:\mathbb{R}^{d})}[I(\zeta
)+F(\zeta)]+\gamma. \label{eq:eqnearopt}%
\end{equation}
Since $I(\xi)<\infty$, we can find $\xi^{\ast}\in C([0,T]:\mathbb{R}^{d})$,
$(u^{\ast},\varphi^{\ast},\pi^{\ast})\in\mathcal{A}(\xi^{\ast})$ and a
measurable $\varrho:[0,T]\times\mathbb{R}^{d}\rightarrow\mathcal{P}%
(\mathbb{L})$ with properties stated in Proposition \ref{prop:prop4.1}. Let
$(\bar{X}^{\varepsilon},\bar{Y}^{\varepsilon})$ be defined through
\eqref{fastslow_ctrl}, where the controls $(\psi^{\varepsilon},\varphi
^{\varepsilon})\in\mathcal{U}$ are defined in feedback form as
\begin{equation}
\psi^{\varepsilon}(s)\doteq\sum_{j=1}^{|\mathbb{L}|}1_{\{\bar{Y}^{\varepsilon
}(s-)=j\}}u_{j}^{\ast}(s),\;\;\varphi_{ij}^{\varepsilon}(s)\doteq\varphi
_{ij}^{\ast}(s)1_{\{\bar{Y}^{\varepsilon}(s-)=i\}}+1_{\{\bar{Y}^{\varepsilon
}(s-)\neq i\}},\;s\in\lbrack0,T],(i,j)\in\mathbb{T}. \label{eq:contfromlim}%
\end{equation}
From (\ref{VR22}) we get that
\[
-\varepsilon\log\mathbb{E}\left[  \exp\left(  -\varepsilon^{-1}%
F(X^{\varepsilon})\right)  \right]  \leq\mathbb{E}\left[  \tilde{L}_{T}%
(\psi^{\varepsilon})+\bar{L}_{T}(\varphi^{\varepsilon})+F(\bar{X}%
^{\varepsilon})\right]  ,
\]
where $\tilde{L}_{T}$ and $\bar{L}_{T}$ were defined at the beginning of
Section \ref{uppbd}. With $\Delta_{\varepsilon}$ as in \eqref{eq:eq501},
define $Q^{\varepsilon}\in\mathcal{P}_{1}(\mathbb{H}_{T})$ by
\eqref{eqn:defofQ} where $\eta^{\varepsilon}$ is as introduced in
\eqref{eq:eq1347}. From \eqref{eq:eq336_17} and \eqref{eq:eq337_17} (note that
\eqref{eq:eq337_17} gives a lower bound on $\pi_{i}^{\ast}(s)$) it follows
that for all $\varepsilon>0$
\begin{equation}
\tilde{L}_{T}(\psi^{\varepsilon})+\bar{L}_{T}(\varphi^{\varepsilon})\leq
c_{1}(I(\xi)+1)\doteq M<\infty. \label{eq:eq451_17}%
\end{equation}
It then follows from Proposition \ref{ctrlmeas_tight} that $(\bar
{X}^{\varepsilon},Q^{\varepsilon})$ is a tight family of $C([0,T]:\mathbb{R}%
^{d})\times\mathcal{M}_{F}(\mathbb{H}_{T})$-valued random variables, and if
$(\bar{\xi},\bar{Q})$ is a weak limit point of $\{(\bar{X}^{\varepsilon
},Q^{\varepsilon})\}$ then equations \eqref{xi} and \eqref{inv} hold with
$(\xi,Q)$ replaced with $(\bar{\xi},\bar{Q})$. Disintegrate $\bar{Q}$ as
\begin{equation}
\bar{Q}(ds\times\{y\}\times d\eta\times dz)=ds\bar{\pi}_{y}(s)[\bar
{Q}]_{34|12}(d\eta\times dz). \label{eq:eq456_17}%
\end{equation}
We will now show that, a.s.,
\begin{equation}
(\bar{\xi}(s),\bar{\pi}(s))=(\xi^{\ast}(s),\pi^{\ast}(s))\mbox{ for a.e. }s\in
\lbrack0,T]. \label{eq:eq346_17}%
\end{equation}

We can assume without loss of generality that convergence of $(\bar
{X}^{\varepsilon},Q^{\varepsilon})$ to $(\bar{\xi},\bar{Q})$ holds a.s. along
the full sequence. We begin by showing that for every $y\in\mathbb{L}$,
$t\in\lbrack0,T]$, and any continuous map $h:[0,T]\rightarrow\mathbb{R}^{m}$,
with $h(s)^{\prime}$ denoting the transpose,%
\begin{equation}
\int_{\mathbb{H}_{t}}h(s)^{\prime}1_{\{j\}}(y)z\bar{Q}(d\mathbf{v})=\int
_{0}^{t}h(s)^{\prime}u_{j}^{\ast}(s)\bar{\pi}_{j}(s)ds. \label{eq:eq444_17}%
\end{equation}
As in the proof of Proposition \ref{ctrlmeas_tight}, using \eqref{eq:eq451_17}
we have that for all $t\in\lbrack0,T]$ and $j\in\mathbb{L}$
\begin{equation}
\int_{\mathbb{H}_{t}}h(s)^{\prime}1_{\{j\}}(y)zQ^{\varepsilon}(d\mathbf{v}%
)\rightarrow\int_{\mathbb{H}_{t}}h(s)^{\prime}1_{\{j\}}(y)z\bar{Q}%
(d\mathbf{v}). \label{eq:eq447_17}%
\end{equation}
The left side of (\ref{eq:eq447_17}) equals
\[
\int_{\lbrack0,t]}h(s)^{\prime}\frac{1}{\Delta_{\varepsilon}}\int
_{s}^{(s+\Delta_{\varepsilon})\wedge T}1_{\{\bar{Y}^{\varepsilon}(r)=j\}}%
u_{j}^{\ast}(r)dr=\int_{[0,t]}h(s)^{\prime}u_{j}^{\ast}(s)\frac{1}%
{\Delta_{\varepsilon}}\int_{s}^{(s+\Delta_{\varepsilon})\wedge T}1_{\{\bar
{Y}^{\varepsilon}(r)=j\}}dr+\mathcal{R}_{t}^{\varepsilon},
\]
where $\mathcal{R}_{t}^{\varepsilon}$ is simply the difference of the first
term on the right and the term on the left. Denoting the marginals of
$Q^{\varepsilon}$ and $\bar{Q}$ on the first two coordinates by
$[Q^{\varepsilon}]_{12}$ and $[\bar{Q}]_{12}$, the first term on the right
side equals
\[
\int_{\lbrack0,t]\times\mathbb{L}}h(s)^{\prime}u_{j}^{\ast}(s)1_{\{j\}}%
(y)[Q^{\varepsilon}]_{12}(dy\times ds).
\]
From Lemma \ref{lem:lusincgce} and the fact that $u_{j}^{\ast}$ is square
integrable, this converges a.s. to
\[
\int_{\lbrack0,t]\times\mathbb{L}}h(s)^{\prime}u_{j}^{\ast}(s)1_{\{j\}}%
(y)[\bar{Q}]_{12}(dy\times ds)=\int_{[0,t]}h(s)^{\prime}u_{j}^{\ast}%
(s)\bar{\pi}_{j}(s)ds,
\]
where the equality follows from \eqref{eq:eq456_17}. Thus in order to prove
\eqref{eq:eq444_17}, in view of \eqref{eq:eq447_17}, it suffices to show that
$\mathcal{R}_{t}^{\varepsilon}\rightarrow0$ in probability as $\varepsilon
\rightarrow0$ for all $t\in\lbrack0,T]$. By a similar calculation as in
\eqref{eq:eq259_17}
\begin{align*}
|\mathcal{R}_{t}^{\varepsilon}|  &  =\left\vert \int_{[0,t]}h(s)^{\prime}%
\frac{1}{\Delta_{\varepsilon}}\int_{s}^{(s+\Delta_{\varepsilon})\wedge
T}1_{\{\bar{Y}^{\varepsilon}(r)=j\}}(u_{j}^{\ast}(r)-u_{j}^{\ast
}(s))drds\right\vert \\
&  \leq\tilde{T}_{t}^{(1)}+\tilde{T}_{t}^{(2)}+\tilde{T}_{t}^{(3)}%
\end{align*}
where, as in \eqref{eq:eq529} and \eqref{eq:eq530},
\[
(\tilde{T}_{t}^{(1)})^{2}+(\tilde{T}_{t}^{(3)})^{2}\leq8\Vert h\Vert_{\infty
}^{2}\Delta_{\varepsilon}\int_{0}^{T}\Vert u_{j}^{\ast}(s)\Vert^{2}ds
\]
and
\begin{equation}
\tilde{T}_{t}^{(2)}=\int_{[\Delta_{\varepsilon},t]}\left(  1_{\{\bar
{Y}^{\varepsilon}(r)=j\}}\frac{1}{\Delta_{\varepsilon}}\int_{r-\Delta
_{\varepsilon}}^{r}h(s)^{\prime}(u_{j}^{\ast}(r)-u_{j}^{\ast}(s))ds\right)
dr. \label{eq:eq656_17}%
\end{equation}
We will use the following fact several times: If $f:[0,T]\rightarrow
\mathbb{R}$ is integrable and $g:[0,T]\rightarrow\mathbb{R}$ is bounded, then
\begin{equation}
\int_{\lbrack\Delta_{\varepsilon},t]}\left(  1_{\{\bar{Y}^{\varepsilon
}(r)=j\}}\frac{1}{\Delta_{\varepsilon}}\int_{r-\Delta_{\varepsilon}}%
^{r}g(s)(f(r)-f(s))ds\right)  dr\rightarrow0,\mbox{ as }\varepsilon
\rightarrow0. \label{eq:eqgenffact}%
\end{equation}
This is a consequence of the generalized dominated convergence theorem, which
says that if $\psi_{\varepsilon},\psi,h_{\varepsilon},h:[0,t]\rightarrow
\mathbb{R}$ are integrable maps such that $\psi_{\varepsilon}(s)\rightarrow
\psi(s)$, $h_{\varepsilon}(s)\rightarrow h(s)$ for a.e. $s\in(0,t)$;
$|\psi_{\varepsilon}|\leq h_{\varepsilon}$ a.e.; and
\[
\int_{\lbrack0,t]}h_{\varepsilon}(s)ds\rightarrow\int_{\lbrack0,t]}%
h(s)ds\mbox{ as }\varepsilon\rightarrow0,
\]
then
\[
\int_{\lbrack0,t]}\psi_{\varepsilon}(s)ds\rightarrow\int_{\lbrack0,t]}%
\psi(s)ds\mbox{ as }\varepsilon\rightarrow0.
\]
Indeed, the term in the parentheses in \eqref{eq:eqgenffact} converges to $0$
for a.e. $r\in(0,t)$ and is bounded above in absolute value by
\[
\Vert g\Vert_{\infty}\left(  |f(r)|+\frac{1}{\Delta_{\varepsilon}}%
\int_{r-\Delta_{\varepsilon}}^{r}|f(s)|ds\right)  ,
\]
which converges for a.e. $r$ to $2\Vert g\Vert_{\infty}|f(r)|$. Also as
$\varepsilon\rightarrow0$
\[
\int_{\lbrack\Delta_{\varepsilon},t]}\frac{1}{\Delta_{\varepsilon}}%
\int_{r-\Delta_{\varepsilon}}^{r}|f(s)|dsdr\rightarrow\int_{0}^{t}|f(r)|dr.
\]
This proves \eqref{eq:eqgenffact}. Using this fact in \eqref{eq:eq656_17} we
now have that $\tilde{T}_{t}^{(2)}\rightarrow0$ as $\varepsilon\rightarrow0$.
Thus $\mathcal{R}_{t}^{\varepsilon}\rightarrow0$ and hence \eqref{eq:eq444_17} follows.

Applying \eqref{eq:eq444_17} to the rows of $a_{j}(\bar{\xi}(s))$ and summing
over $j$, we have that
\[
\int_{\mathbb{H}_{t}}a(\bar{\xi}(s),y)z\bar{Q}(d\mathbf{v})=\sum
_{j=1}^{|\mathbb{L}|}\int_{[0,t]}a_{j}(\bar{\xi}(s))u_{j}^{\ast}(s)\bar{\pi
}_{j}(s)ds.
\]
Also, recalling the representation of $\bar{Q}$ in \eqref{eq:eq456_17}
\[
\int_{\mathbb{H}_{t}}b(\bar{\xi}(s),y)\bar{Q}(d\mathbf{v})=\sum_{j=1}%
^{|\mathbb{L}|}\int_{[0,t]}b_{j}(\bar{\xi}(s))\bar{\pi}_{j}(s)ds.
\]
Thus we have shown that \eqref{eq:stateq} is satisfied with $(\xi,\pi,u)$
replaced with $(\bar{\xi},\bar{\pi},u^{\ast})$.

We next show that \eqref{eq:eqinvar} is satisfied as well, i.e.,
\begin{equation}
\sum_{i\in\mathbb{L}}\bar{\pi}_{i}(s)A_{ij}^{\varphi_{i,\cdot}^{\ast}%
(s,\cdot)}(\bar{\xi}(s))=0\mbox{ for a.e. }s\in\lbrack0,T]\mbox{ and all }j\in
\mathbb{L}. \label{Eq:eq458_17}%
\end{equation}
For this recall that \eqref{inv} is satisfied with $(\xi,Q)$ replaced with
$(\bar{\xi},\bar{Q})$, i.e.,
\begin{equation}
\int_{\mathbb{H}_{t}}A_{y,j}^{\eta}(\bar{\xi}(s))\bar{Q}(d\mathbf{v})=0.
\label{eq:eq911_17}%
\end{equation}
Also, from Lemma \ref{lem:genQ} (see, e.g., the proof of \eqref{E.1}), for any
continuous function $g:[0,T]\rightarrow\mathbb{R}^{d}$,
\begin{equation}
\lim_{\varepsilon\rightarrow0}\int_{\mathbb{H}_{t}}\eta_{j}(E_{yj}%
(g(s)))Q^{\varepsilon}(d\mathbf{v})=\int_{\mathbb{H}_{t}}A_{y,j}^{\eta
}(g(s))\bar{Q}(d\mathbf{v}), \label{eq:eq831_17}%
\end{equation}
and a similar argument as in \eqref{eq:ab1202} shows that, for every
$t\in\lbrack0,T]$,
\begin{equation}
\left\vert \int_{\mathbb{H}_{t}}\eta_{j}(E_{yj}(g(s)))Q^{\varepsilon
}(d\mathbf{v})-\int_{0}^{t}1_{\{\bar{Y}^{\varepsilon}(s)=i\}}\int
_{E_{ij}(g(s))}\varphi_{ij}^{\ast}(s,z)\lambda_{\zeta}(dz)ds\right\vert
\rightarrow0 \label{eq:eq831_17b}%
\end{equation}
as $\varepsilon\rightarrow0$. Next, as in the proof of the convergence of
$\mathcal{R}_{t}^{\varepsilon}\rightarrow0$, it follows that for every
$(i,j)\in\mathbb{T}$
\begin{equation}
\tilde{\mathcal{R}}_{t}^{\varepsilon}\doteq\int_{0}^{t}\beta_{ij}(s)\left(
\frac{1}{\Delta_{\varepsilon}}\int_{s}^{(s+\Delta_{\varepsilon})\wedge
T}1_{\{\bar{Y}^{\varepsilon}(r)=i\}}dr-1_{\{\bar{Y}^{\varepsilon}%
(s)=i\}}\right)  ds\rightarrow0 \label{eq:eq828_17}%
\end{equation}
in probability as $\varepsilon\rightarrow0$, where
\[
\beta_{ij}(s)\doteq\int_{E_{ij}(g(s))}\varphi_{ij}^{\ast}(s,z)\lambda_{\zeta
}(dz).
\]
Indeed the terms analogous to $\tilde{T}_{t}^{(1)}$ and $\tilde{T}_{t}^{(3)}$
are shown to converge to $0$ exactly as before using the bound
\eqref{eq:eq451_17}. The term analogous to $\tilde{T}_{t}^{(2)}$ given by
\[
\bar{T}_{t}^{(2)}\doteq\int_{\lbrack\Delta_{\varepsilon},t]}\left(
1_{\{\bar{Y}^{\varepsilon}(r)=i\}}\frac{1}{\Delta_{\varepsilon}}\int
_{r-\Delta_{\varepsilon}}^{r}(\beta_{ij}(r)-\beta_{ij}(s))ds\right)  dr
\]
can be shown to converges to $0$ by recalling the fact in
\eqref{eq:eqgenffact} and that $\int_{[0,T]}\beta_{ij}(s)ds<\infty$.

From the convergence in \eqref{eq:eq828_17} and the equalities in
\eqref{eq:eq831_17} and \eqref{eq:eq831_17b} it now follows that
\[
\sum_{i\in\mathbb{L}}\int_{[0,t]}\frac{1}{\Delta_{\varepsilon}}\int
_{s}^{(s+\Delta_{\varepsilon})\wedge T}1_{\{\bar{Y}^{\varepsilon}(r)=i\}}%
\int_{E_{ij}(g(s))}\varphi_{ij}^{\ast}(s,z)\lambda_{\zeta}(dz)drds\rightarrow
\int_{\mathbb{H}_{t}}A_{y,j}^{\eta}(g(s))\bar{Q}(d\mathbf{v})
\]
as $\varepsilon\rightarrow0$. However, the expression on the left side in the
previous display is the same as
\[
\int_{\lbrack0,t]\times\mathbb{L}}\left(  \int_{E_{yj}(g(s))}\varphi
_{yj}^{\ast}(s,z)\lambda_{\zeta}(dz)\right)  [Q^{\varepsilon}]_{12}(dy\times
ds).
\]
From \eqref{eq:eq451_17}
\[
\int_{\lbrack0,T]}\left(  \int_{[0,\zeta]}\varphi_{yj}^{\ast}(s,z)\lambda
_{\zeta}(dz)\right)  ds<\infty.
\]
Thus from Lemma \ref{lem:lusincgce}
\begin{align*}
&  \int_{[0,t]\times\mathbb{L}}\left(  \int_{E_{yj}(g(s))}\varphi_{yj}^{\ast
}(s,z)\lambda_{\zeta}(dz)\right)  [Q^{\varepsilon}]_{12}(dy\times ds)\\
&  \quad\rightarrow\int_{\lbrack0,t]\times\mathbb{L}}\left(  \int
_{E_{yj}(g(s))}\varphi_{yj}^{\ast}(s,z)\lambda_{\zeta}(dz)\right)  [\bar
{Q}]_{12}(dy\times ds)\\
&  \quad=\int_{[0,t]}\sum_{i\in\mathbb{L}}\bar{\pi}_{i}(s)A_{ij}%
^{\varphi_{i,\cdot}^{\ast}(s,\cdot)}(g(s))ds
\end{align*}
for all $t\in\lbrack0,T]$. Thus we have shown that for every continuous
$g:[0,T]\rightarrow\mathbb{R}^{d}$ and every $t\in\lbrack0,T]$,
\[
\int_{\mathbb{H}_{t}}A_{y,j}^{\eta}(g(s))\bar{Q}(d\mathbf{v})=\int_{[0,t]}%
\sum_{i\in\mathbb{L}}\bar{\pi}_{i}(s)A_{ij}^{\varphi_{i,\cdot}^{\ast}%
(s,\cdot)}(g(s))ds.
\]
Taking $g=\bar{\xi}$ and using \eqref{eq:eq911_17} we have \eqref{Eq:eq458_17}.

We have therefore shown that both \eqref{eq:stateq} and \eqref{eq:eqinvar}
hold with $(\xi, \pi, u, \varphi)$ replaced by $(\bar\xi, \bar\pi, u^{*},
\varphi^{*})$. Thus from part 4 of Proposition \ref{prop:prop4.1}, we have \eqref{eq:eq346_17}.

Finally we consider the convergence of costs. Note that
\[
\tilde{L}_{T}(\psi^{\varepsilon})=\frac{1}{2}\int_{0}^{T}\Vert\psi
^{\varepsilon}(s)\Vert^{2}ds=\frac{1}{2}\sum_{j\in\mathbb{L}}\int_{0}%
^{T}1_{\{\bar{Y}^{\varepsilon}(s)=j\}}\Vert u_{j}^{\ast}(s)\Vert^{2}ds.
\]
Also, for each $j$,
\[
\int_{0}^{T}1_{\{\bar{Y}^{\varepsilon}(s)=j\}}\Vert u_{j}^{\ast}(s)\Vert
^{2}ds=\int_{0}^{T}\Vert u_{j}^{\ast}(s)\Vert^{2}\frac{1}{\Delta_{\varepsilon
}}\int_{s}^{(s+\Delta_{\varepsilon})\wedge T}1_{\{\bar{Y}^{\varepsilon
}(r)=j\}}drds+\mathcal{R}^{\varepsilon,1},
\]
where $\mathcal{R}^{\varepsilon,1}$ is defined as the difference of the first
expression on the right and the expression on the left. The first expression
on the right side equals
\[
\int_{\lbrack0,T]\times\mathbb{L}}\Vert u_{j}^{\ast}(s)\Vert^{2}%
1_{\{j\}}(y)[Q^{\varepsilon}]_{12}(dy\times ds)
\]
which from Lemma \ref{lem:lusincgce} and the finiteness of $\int_{[0,T]}\Vert
u_{j}^{\ast}(s)\Vert^{2}ds$ converges to
\[
\int_{\lbrack0,T]}\Vert u_{j}^{\ast}(s)\Vert^{2}\pi_{j}^{\ast}(s)ds.
\]
Also as before, in order to show $\mathcal{R}^{\varepsilon,1}\rightarrow0$ we
need to show the convergence to $0$ of
\[
\int_{\lbrack\Delta_{\varepsilon},t]}\left(  1_{\{\bar{Y}^{\varepsilon
}(r)=i\}}\frac{1}{\Delta_{\varepsilon}}\int_{r-\Delta_{\varepsilon}}^{r}(\Vert
u_{j}^{\ast}(r)\Vert^{2}-\Vert u_{j}^{\ast}(s)\Vert^{2})ds\right)  dr,
\]
which follows from \eqref{eq:eqgenffact} and the fact that $\int_{[0,T]}\Vert
u_{j}^{\ast}(s)\Vert^{2}ds<\infty$. Thus we have shown that, as $\varepsilon
\rightarrow0$,
\[
\tilde{L}_{T}(\psi^{\varepsilon})\rightarrow\frac{1}{2}\sum_{j\in\mathbb{L}%
}\int_{0}^{T}\pi_{j}^{\ast}(s)\Vert u_{j}^{\ast}(s)\Vert^{2}ds.
\]
The second term in the cost can be handled in a similar manner. Note that
\[
\bar{L}_{T}(\varphi^{\varepsilon})=\sum_{(i,j)\in\mathbb{T}}\int
_{[0,T]\times\lbrack0,\zeta]}\ell(\varphi_{ij}^{\varepsilon}(s,z))\lambda
_{\zeta}(dz)ds=\sum_{(i,j)\in\mathbb{T}}\int_{[0,T]\times\lbrack0,\zeta
]}1_{\{\bar{Y}^{\varepsilon}(s)=i\}}\ell(\varphi_{ij}^{\ast}(s,z))\lambda
_{\zeta}(dz)ds.
\]
For $(i,j)\in\mathbb{T}$ write
\[
\int_{\lbrack0,T]\times\lbrack0,\zeta]}1_{\{\bar{Y}^{\varepsilon}(s)=i\}}%
\ell(\varphi_{ij}^{\ast}(s,z))\lambda_{\zeta}(dz)ds
\]
as
\[
\int_{\lbrack0,T]}\left(  \int_{[0,\zeta]}\ell(\varphi_{ij}^{\ast
}(s,z))\lambda_{\zeta}(dz)\right)  \frac{1}{\Delta_{\varepsilon}}\int
_{s}^{(s+\Delta_{\varepsilon})\wedge T}1_{\{\bar{Y}^{\varepsilon}%
(r)=i\}}drds+\tilde{\mathcal{R}}^{\varepsilon,1}.
\]
Then using the fact that $\int_{[0,T]\times\lbrack0,\zeta]}\ell(\varphi
_{ij}^{\ast}(s,z))\lambda_{\zeta}(dz)ds<\infty$ we see as before that
$\tilde{\mathcal{R}}^{\varepsilon,1}\rightarrow0$. Also, the first term in the
last display, once again using Lemma \ref{lem:lusincgce} and the integrability
noted in the last sentence, converges to
\[
\int_{\lbrack0,T]\times\lbrack0,\zeta]}\ell(\varphi_{ij}^{\ast}(s,z))\pi
_{i}^{\ast}(s)\lambda_{\zeta}(dz)ds.
\]
Combining the above observations we have, as $\varepsilon\rightarrow0$,
\[
\bar{L}_{T}(\varphi^{\varepsilon})\rightarrow\sum_{(i,j)\in\mathbb{T}}%
\int_{[0,T]\times\lbrack0,\zeta]}\ell(\varphi_{ij}^{\ast}(s,z))\pi_{i}^{\ast
}(s)\lambda_{\zeta}(dz)ds
\]
in probability. Using \eqref{eq:eq451_17} and the dominated convergence
theorem gives
\begin{align*}
&  \mathbb{E}\left[  \tilde{L}_{T}(\psi^{\varepsilon})+\bar{L}_{T}%
(\varphi^{\varepsilon})\right] \\
&  \rightarrow\sum_{i}\frac{1}{2}\int_{0}^{T}\Vert u_{i}^{\ast}(s)\Vert^{2}%
\pi_{i}^{\ast}(s)ds+\sum_{(i,j)\in\mathbb{T}}\int_{[0,\zeta]\times\lbrack
0,T]}\ell(\varphi_{ij}^{\ast}(s,z))\pi_{i}^{\ast}(s)\lambda_{\zeta}(dz)ds\\
&  \leq I(\xi)+\gamma.
\end{align*}
Using this and the convergence of $\bar{X}^{\varepsilon}$ to $\xi^{\ast}$ (see
\eqref{eq:eq346_17}), we have from \eqref{eq:eqnearopt} that
\begin{align*}
\limsup_{\varepsilon\rightarrow0}-\varepsilon\log\mathbb{E}\left[  \exp\left(
-\varepsilon^{-1}F(X^{\varepsilon})\right)  \right]   &  \leq F(\xi^{\ast
})+I(\xi)+\gamma\\
&  \leq F(\xi)+I(\xi)+\gamma+C_{F}\gamma\\
&  \leq\inf_{\zeta\in C([0,T]:\mathbb{R}^{d})}[I(\zeta)+F(\zeta)]+2\gamma
+C_{F}\gamma,
\end{align*}
where the second inequality uses \eqref{eq:eq811_17} in Proposition
\ref{prop:prop4.1}. Sending $\gamma\rightarrow0$ gives the desired lower bound
in \eqref{eq:lowbdnew}.

\vspace{\baselineskip}

\textsc{\noindent A. Budhiraja \newline Department of Statistics and
Operations Research\newline University of North Carolina\newline Chapel Hill,
NC 27599, USA\newline email: budhiraj@email.unc.edu \vspace{\baselineskip} }

\textsc{\noindent P. Dupuis\newline Division of Applied Mathematics\newline
Brown University\newline Providence, RI 02912, USA\newline email:
Paul\_Dupuis@brown.edu \vspace{\baselineskip} }

\textsc{\noindent A. Ganguly\newline Department of Mathematics\newline
Louisiana State University\newline Baton Rouge, LA 70803, USA\newline email:
aganguly@lsu.edu }
\end{document}